\title[Elliptic curves and Fourier coefficients of meromorphic modular forms]{Elliptic curves and Fourier coefficients of \\ meromorphic modular forms}
\author{Pengcheng Zhang}
\address{Max Planck Institute for Mathematics, Vivatsgasse 7, 53111 Bonn, Germany}
\email{pzhang@mpim-bonn.mpg.de}
\date{12 February 2026}
\keywords{meromorphic modular forms, symmetric powers of elliptic curves, magnetic modular forms, supercongruences, Chowla--Selberg periods.}
\subjclass[2020]{Primary  11F37, 11F30, 11F33, 11F23; Secondary 11G05, 11G15, 11F27, 33C20}
\newtheorem{theorem}{Theorem}[section]
\newtheorem{lemma}[theorem]{Lemma}
\newtheorem{proposition}[theorem]{Proposition}
\newtheorem{conjecture}[theorem]{Conjecture}
\theoremstyle{definition}
\newtheorem{example}{Example}[section]
\theoremstyle{remark}
\newtheorem{remark}[theorem]{Remark}
\def\MR#1{}
\newcommand\cl[1]{\mathcal{#1}}
\newcommand\fr[1]{\mathfrak{#1}}
\newcommand\ov[1]{\overline{#1}}
\newcommand\wt[1]{\widetilde{#1}}
\newcommand\wh[1]{\widehat{#1}}
\newcommand\bo[1]{\boldsymbol{#1}}
\newcommand{\Q}{\mathbb{Q}}
\newcommand{\Z}{\mathbb{Z}}
\newcommand{\C}{\mathbb{C}}
\newcommand{\F}{\mathbb{F}}
\newcommand{\eps}{\varepsilon}
\newcommand{\clh}{\mathcal{H}}
\newcommand{\clo}{\mathcal{O}}
\newcommand{\eq}{\;=\;}
\newcommand{\deq}{\;:=\;}
\def\thin{{\hspace{1pt}}}
\newcommand{\divides}{\thin|\thin}
\newcommand{\N}{\mathrm{N}}
\newcommand{\SL}{\mathrm{SL}}
\newcommand{\tr}{\mathrm{tr}}
\newcommand{\Tr}{\mathrm{Tr}}
\newcommand{\Res}{\mathrm{Res}}
\newcommand{\Ind}{\mathrm{Ind}}
\newcommand{\Sym}{\mathrm{Sym}}
\newcommand{\mero}{\mathrm{mero}}
\newcommand{\Span}{\mathrm{Span}}
\newcommand{\Frob}{\mathrm{Frob}}
\newcommand{\Trun}{\mathrm{Trun}}
\newcommand{\prim}{\mathrm{prim}}
\newcommand{\dif}{\mathrm{d}}
\newcommand{\aux}{\mathrm{aux}}
\newcommand{\MMF}{\mathrm{MMF}}
\renewcommand{\Im}{\operatorname{Im}}
\mathchardef\pFcomma=\mathcode`, 
\newcommand*\pFq[5]{%
  \begingroup
  \begingroup\lccode`~=`,
    \lowercase{\endgroup\def~}{\mkern\pFqskip}%
  \mathcode`,=\string"8000
  {}_{#1}F_{#2}\biggl[\genfrac..{0pt}{}{#3}{#4};#5\biggr]%
  \endgroup
}
\newcommand*\smallpFq[5]{%
  \begingroup
  \begingroup\lccode`~=`,
    \lowercase{\endgroup\def~}{\mkern\pFqskip}%
  \mathcode`,=\string"8000
  {}_{#1}F_{#2}\bigl[\genfrac..{0pt}{}{#3}{#4};#5\bigr]%
  \endgroup
}
\begin{document}

\begin{abstract}
We discuss several congruences satisfied by the coefficients of meromorphic modular forms, or equivalently, the $p$-adic behaviors of meromorphic modular forms under the $U_p$ operator, that are summarized from numerical experiments. In the generic case, we observe the connection to symmetric powers of elliptic curves, while in the CM case, we furthermore observe the connection to the $p$-adic analogue of the Chowla--Selberg periods. Along with the discussions, we will provide some heuristic explanations for these congruences as well as prove some of them using hypergeometric functions and the Borcherds--Shimura lift.
\end{abstract}

\maketitle

\setcounter{tocdepth}{1}
\tableofcontents

\section{Introduction}
\label{introduction}

Coefficients of modular forms have always been one of the central topics in the field of modular forms. The vast majority of work in this field is restricted to the study of holomorphic or weakly holomorphic modular forms. In this paper, we instead turn to \emph{meromorphic} modular forms. The main objects of interest are meromorphic modular forms of level $1$ with precisely one pole at a non-cuspidal point. We will discuss the conjectural\footnote{It should be noted that while this paper heavily adopts the word ``conjectures'', they should be more accurately named as numerical observations, as they are summarized from numerical experiments with these meromorphic modular forms.} $p$-adic properties possessed by these meromorphic modular forms as well as the heuristic reasons for those properties. In particular, it turns out that these meromorphic modular forms of weight $k$ behave similarly to symmetric $(k-2)$-nd powers of elliptic curves. Indeed, the paper by Brown--Fonseca \cite{brown-fonseca} already evidences such a connection. We will also discuss the CM case in detail, namely when the non-cuspidal pole is at a CM point, which may be the most interesting part of this paper and deserves thorough study in its own right. In this case, we will construct a family of meromorphic modular forms whose conjectural $p$-adic behaviors can be characterized via the $p$-adic analogue of the Chowla--Selberg periods, which is then related to special values of $p$-adic $L$-functions via Ferrero--Greenberg \cite{ferrero-greenberg}.

For ease of exposition, throughout the first half of this paper, we will mostly focus on the cases of weight $k\in\{4,6,8,10,14\}$, and specifically, on modular forms of the form
\begin{align*}
    A_1\cdot\frac{E_k}{j-c}+A_2\cdot\frac{E_k}{(j-c)^2}+\cdots+A_r\cdot\frac{E_k}{(j-c)^r},
\end{align*}
where $c\in\Q$, each $A_i\in\Q$, and $E_k$ is the Eisenstein series of weight $k\in\{4,6,8,10,14\}$. For general weights, we will first remove the influence of cusp forms using \emph{relations}, or equivalently, certain linear combinations of the Hecke operators, and then investigate the $p$-adic properties of the resulting modular forms. We will discuss this generalization and prove the main results for all weights in the second half of this paper.

\subsection{The case of weight $4$}

We first motivate the subject through several phenomena satisfied by the modular form
\begin{align*}
    \frac{E_4}{j-c}
\end{align*}
for some $c\in\Q$. We start with the cases when $c\in\{0,1728\}$, i.e., $\frac{E_4}{j}$ and $\frac{E_4}{j-1728}$. Here we make a definition that a (general) modular form $\sum_{n=1}^\infty a_nq^n\in\Z[[q]]$ is called \emph{magnetic} if $n\divides a_n$ for all $n\in\Z^+$, and specifically \emph{$r$-magnetic} for some $r\in\Z_{\geq0}$ if $n^r\divides a_n$ for all $n\in\Z^+$. We have the following result by Li--Neururer \cite[Theorem~1.5]{li-neururer} and Pa\c{s}ol--Zudilin \cite[Theorem~1]{pasol-zudilin}.

\begin{theorem}
\label{1-magnetic-k=4-j=0-1728}
Both $\frac{E_4}{j}$ and $\frac{E_4}{j-1728}$ are $1$-magnetic. 
\end{theorem}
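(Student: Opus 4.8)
The plan is to fix a prime $p$ and establish the $p$-adic statement $v_p(a_n)\ge v_p(n)$ for every $n$, where $f=\sum_{n\ge1}a_nq^n$ denotes either $\frac{E_4}{j}$ or $\frac{E_4}{j-1728}$; this for all $p$ is exactly $1$-magneticity. First I would record the closed forms $\frac{E_4}{j}=\frac{\Delta}{E_4^2}$ and $\frac{E_4}{j-1728}=\frac{E_4\Delta}{E_6^2}$, read off from $j=E_4^3/\Delta$ and $j-1728=E_6^2/\Delta$. Since $E_4,E_6\in1+q\Z[[q]]$ are units in $\Z[[q]]$ and $\Delta\in q\Z[[q]]$, both forms lie in $q\Z[[q]]$; in particular $a_n\in\Z$, which already settles every $n$ prime to $p$. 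It thus remains to treat $p\divides n$.

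Next I would translate the claim into a statement about the $U_p$ operator. Writing $U_pf=\sum_{m\ge1}a_{pm}q^m$ and iterating, one has $[q^m]U_p^sf=a_{p^sm}$, so the assertion ``$p^s\divides a_n$ whenever $p^s\divides n$'' is literally equivalent to $U_p^sf\equiv0\spmod{p^s}$. Taking $s=v_p(n)$ then yields $v_p(a_n)\ge v_p(n)$, so it suffices to prove $U_p^sf\equiv0\spmod{p^s}$ for all $s\ge1$. The cleanest route to obtaining all of these simultaneously is to show that $U_p$ carries the relevant class of integral meromorphic level-$1$ forms into $p$ times itself: if $U_pf=p\cdot f_1$ with $f_1$ again in the class, then $U_p^sf=p^sf_s$ by induction, and the congruences follow at once.

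The base case $U_pf\equiv0\spmod p$ is the mod-$p$ heart. It says precisely that $f\bmod p$ lies in the image of $\theta=q\tfrac{d}{dq}$ on $\F_p[[q]]$, since that image is exactly the span of the monomials $q^n$ with $p\nmid n$. I would verify it by reducing the closed forms modulo $p$ and invoking the theory of $\theta$ on mod-$p$ modular forms together with the Hasse-invariant congruence $E_{p-1}\equiv1\spmod p$ for $p\ge5$, the primes $2,3$ being handled separately. The genuinely hard part is the upgrade from mod $p$ to mod $p^s$, equivalently the stability of the class under $\tfrac1pU_p$. Here the special role of $c\in\{0,1728\}$ is decisive: the poles sit at the CM points $\rho=e^{2\pi i/3}$ and $i$, whose automorphism groups have orders $3$ and $2$, so $U_p$ acts on the local data at these points through the splitting of $p$ in $\Z[\zeta_3]$ and $\Z[i]$, and the resulting valuations of the Frobenius-type eigenvalues force the extra factor of $p$ at each stage. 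This CM mechanism, which is the symmetric-power-of-an-elliptic-curve analogy advertised in the introduction, is the main obstacle and the technical core carried out in Li--Neururer and Pa\c{s}ol--Zudilin; the latter equivalently extract the divisibility from explicit hypergeometric identities that exhibit $a_n/n$ as a manifestly integral quantity.
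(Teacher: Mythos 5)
Your reduction of $1$-magneticity to the family of congruences $U_p^s f\equiv 0\pmod{p^s}$, and the closed forms $\frac{E_4}{j}=\Delta/E_4^2$, $\frac{E_4}{j-1728}=E_4\Delta/E_6^2$ giving integrality, are correct. The gap is in everything after that. Your strategy hinges on exhibiting a class of integral meromorphic level-$1$ forms that $U_p$ maps into $p$ times itself, but you never say what this class is, and there is no natural candidate on the integer-weight side: $U_p$ raises the level and moves the pole (the paper remarks explicitly that there is no genuine $U_p$ action on the spaces spanned by the $\frac{E_k}{(j-c)^r}$), so ``$f_1$ again in the class'' cannot be arranged by fiat. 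The paragraph about the splitting of $p$ in $\Z[\zeta_3]$ and $\Z[i]$ and ``Frobenius-type eigenvalues'' forcing an extra factor of $p$ at each stage is a heuristic for why one expects the congruence, not an argument. Even the base case is only asserted: the theory of $\theta$ on mod-$p$ modular forms concerns the graded algebra of holomorphic forms and does not directly apply to $\Delta/E_4^2$, whose denominator is not invertible in that algebra. In the end you defer the entire technical core to Li--Neururer and Pa\c{s}ol--Zudilin, so the proposal is a correct framing plus a citation rather than a proof; you also slightly mischaracterize Pa\c{s}ol--Zudilin, whose argument, like Li--Neururer's, goes through the Borcherds--Shimura lift rather than through hypergeometric identities.

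For comparison, the mechanism that actually closes this gap in the paper (\Cref{magnetic-theorem}, specialized to $k=4$ and $D\in\{-3,-4\}$, which recovers \Cref{1-magnetic-k=4-j=0-1728} together with the stronger supercongruence of \Cref{supercongruence-k=4-j=0-1728}) is to move the whole problem to half-integral weight. One identifies $\frac{E_4}{j}$ and $\frac{E_4}{j-1728}$, up to explicit constants, with the Shimura lifts $\cl{S}_1 f_{5/2,3}$ and $\cl{S}_1 f_{5/2,4}$ of elements of the Kohnen plus space $M^{!,+}_{5/2}$ (\Cref{shimura-preimage}, after Duke--Jenkins). That space has an integral basis $\{f_{5/2,m}\}$ indexed by principal parts, so any divisibility of a form is detected on its principal part alone (\Cref{principal-cong-whole-cong}); this is precisely the structural input that replaces your missing ``stable class''. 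The recursion $g_{i+1}=p^{-3}\bigl(g_i|T_{p,5/2}-g_{i-1}\bigr)$ then stays integral (\Cref{half-int-integrality-lemma}), and Hecke-equivariance of the lift transports this to $a_{np^l}\equiv\bigl(\tfrac{-D}{p}\bigr)p\thin a_{np^{l-1}}\pmod{p^{3l}}$ (\Cref{p^2-nmid-m}), from which $n\divides a_n$ follows. If you want to complete your proposal, this passage to the plus space is the missing idea.
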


The first numerically observed example of a magnetic modular form appeared in Broadhurst--Zudilin \cite[Conjecture~4]{broadhurst-zudilin} with motivation coming from the Hall effect in electromagnetism. The magnetic property of that example, together with that of $\frac{64E_4}{j}$, was proved in Li--Neururer \cite{li-neururer}. Their proof uses the Borcherds--Shimura lift: one identifies the half-integral weight preimage, proves certain divisibility properties for the preimage, and deduces the magnetic property of the image. This method was later generalized by Pa\c{s}ol--Zudilin \cite{pasol-zudilin} to show the magnetic property of other meromorphic modular forms, including $\frac{E_4}{j}$ and $\frac{E_4}{j-1728}$, all of which have poles at CM points. For a detailed discussion of magnetic modular forms, we also refer to the paper by B\"{o}nisch--Duhr--Maggio \cite{boenisch-duhr-maggio} and the references therein.

By modifying the method of using the Borcherds--Shimura lift, one can also show that both $\frac{E_4}{j}$ and $\frac{E_4}{j-1728}$ satisfy the following supercongruences \footnote{Here the word ``supercongruences'' refers to the type of congruences with modulus $p^{rl}$ for some $r>1$.}, which almost imply their $1$-magnetic property and in particular confirm that they are not $2$-magnetic. In general, both the magnetic property and the supercongruences occur for meromorphic modular forms with CM poles, as we will see in \Cref{cm-section}.

\begin{theorem}
\label{supercongruence-k=4-j=0-1728}
For all primes $p\geq 5$ and all $n,l\in\Z^+$,
\begin{alignat*}{2}
    a_{np^l}\bigg(\frac{E_4}{j}\bigg)&\;\equiv\;\big(\tfrac{-3}{p}\big)p\thin a_{np^{l-1}}\bigg(\frac{E_4}{j}\bigg)&&\pmod{p^{3l}} \\
    a_{np^l}\bigg(\frac{E_4}{j-1728}\bigg)&\;\equiv\;\big(\tfrac{-4}{p}\big)p\thin a_{np^{l-1}}\bigg(\frac{E_4}{j-1728}\bigg)&&\pmod{p^{3l}},
\end{alignat*}
where $(\frac{\cdot}{p})$ denotes the Legendre symbol.
\end{theorem}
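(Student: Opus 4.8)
The plan is to follow the three-step Borcherds--Shimura strategy that Li--Neururer and Pa\c{s}ol--Zudilin used for the magnetic property, but to extract sharper $p$-adic information at the level of the half-integral weight preimage. First I would realise each of $\frac{E_4}{j}$ and $\frac{E_4}{j-1728}$ as a regularised theta (Shimura--Shintani) lift of an explicit weakly holomorphic modular form $f$ of some half-integral weight $\kappa$ for the Weil representation attached to the quadratic lattice of discriminant $D$, where $D=-3$ for the pole $\rho$ at $j=0$ and $D=-4$ for the pole $i$ at $j=1728$ (these discriminants are forced by the CM field of the pole, and they are exactly what produces the symbols $\big(\tfrac{-3}{p}\big)$ and $\big(\tfrac{-4}{p}\big)$). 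The output of this identification is a closed formula expressing the Fourier coefficient $a_n(F)$ as a finite sum of coefficients $c_f$ of the preimage, indexed by discriminants of the shape $Dm^2$ and weighted by elementary arithmetic factors (powers of the index together with a genus character). This reduces the theorem to a congruence among the coefficients of $f$.

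The second step is to compute the action of the half-integral weight Hecke operator $T_{p^2}$ on $f$ and read off the resulting three-term relation among $c_f(Dm^2)$, $c_f(Dm^2p^2)$ and $c_f(Dm^2/p^2)$. It is here that $\big(\tfrac{D}{p}\big)$ appears, as the value of the quadratic character built into $T_{p^2}$, and that explicit powers of $p$ enter from its normalisation, ultimately assembling into $p^{k-1}=p^3$ since $k=4$. Feeding this relation through the lift formula of the first step, and using $U_p=T_p-p^{k-1}V_p$ in weight $k$ together with the fact that the Shimura correspondence intertwines $T_{p^2}$ with $T_p$, one obtains the base case $l=1$ of the theorem, namely $a_{np}(F)\equiv\big(\tfrac Dp\big)\thin p\thin a_n(F)\spmod{p^3}$.

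The third step, boosting the modulus from $p^3$ to $p^{3l}$, is where the argument genuinely modifies the magnetic-property proof. One cannot simply iterate the $l=1$ relation: applying it with $n$ replaced by $np^{l-1}$ only recovers $a_{np^l}\equiv\big(\tfrac Dp\big)p\thin a_{np^{l-1}}\spmod{p^3}$, far short of $\spmod{p^{3l}}$. Instead one must show that the error terms in the $T_{p^2}$-recursion for $f$ are themselves highly $p$-divisible and that this gain compounds, so that each increment of $l$ contributes three further powers of $p$. Equivalently, the crux is to prove that $f$ is $p$-adically a $T_{p^2}$-eigenform to all orders, with $T_p$-eigenvalue $\big(\tfrac Dp\big)p$ on the integral-weight side — not merely an eigenform modulo the first power of $p$, which is all the magnetic property requires.

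The hard part, then, is this sharp control of the $p$-adic valuations $\val_p\big(c_f(Dm^2p^{2t})\big)$ as $t$ grows. Two features make it tractable for $p\ge5$: such primes are unramified in both $\Q(\sqrt{-3})$ and $\Q(\sqrt{-1})$, so $\big(\tfrac Dp\big)=\pm1$ and one is always in the split or inert case but never the ramified one (which is exactly why $p=2,3$ are excluded); and in the inert case $\big(\tfrac Dp\big)=-1$ the relevant coefficients $c_f$ at non-residue indices vanish, which simultaneously explains the sign and supplies the extra divisibility that the split case must obtain more delicately. As an independent check on the base case I would also pass through the hypergeometric description $E_4={}_2F_1\big(\tfrac1{12},\tfrac5{12};1;\tfrac{1728}{j}\big)^4$, which recasts $a_{np}\equiv\big(\tfrac Dp\big)p\thin a_n\spmod{p^3}$ in the variable $t=\tfrac{1728}{j}$ as a Dwork-type supercongruence for the truncated hypergeometric series, giving a useful sanity check on the exponents away from the theta machinery.
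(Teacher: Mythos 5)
Your steps (1) and (2) coincide with the paper's route: \Cref{supercongruence-k=4-j=0-1728} is deduced there from \Cref{magnetic-theorem} by identifying $\frac{E_4}{j}$ and $\frac{E_4}{j-1728}$ (up to explicit constants) with the Shimura lifts $\cl{S}_1f_{5/2,3}$ and $\cl{S}_1f_{5/2,4}$ via \Cref{shimura-preimage}, and the symbols $\big(\tfrac{-3}{p}\big)$, $\big(\tfrac{-4}{p}\big)$ together with the factor $p^{s-1}=p$ do come out of the half-integral weight Hecke operator $T_{p,5/2}$ and the Hecke-equivariance of the lift exactly as you describe. You have also correctly located the real difficulty: iterating the $l=1$ congruence only ever gives modulus $p^3$, never $p^{3l}$.

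Your step (3), however, has a genuine gap. You say one must show that the error terms in the $T_{p^2}$-recursion are highly $p$-divisible and that the gain compounds, and you propose to extract this from a split/inert dichotomy in which, for $\big(\tfrac{D}{p}\big)=-1$, ``the relevant coefficients $c_f$ at non-residue indices vanish.'' That vanishing is false for the preimage in question: $f_{5/2,m}$ is a weakly holomorphic form in the Kohnen plus space, not a unary theta series, and its positive-index coefficients satisfy no vanishing governed by residue classes modulo $p$ (you may be conflating the preimage with the CM eigenform $\Theta$ living on the other side of the picture). The paper's mechanism is entirely uniform in $\big(\tfrac{D}{p}\big)$ and rests on a structural fact you do not supply: setting $g_0=p\big(\tfrac{-m}{p}\big)f_{5/2,m}$, $g_1=f_{5/2,m}$ and $g_{i+1}=p^{-3}\big(g_i|T_{p,5/2}-g_{i-1}\big)$, one checks that the principal part of $g_i|T_{p,5/2}-g_{i-1}$ is exactly $p^{3}q^{-mp^{2i}}$ (the lower-order principal terms cancel), and since a $p$-integral element of $M^{!,+}_{5/2}$ whose coefficients $a_n$ with $n\leq 0$ are divisible by $p^{r}$ is itself divisible by $p^{r}$ (the $f_{5/2,m}$ form an integral basis determined by principal parts, there being no cusp forms to absorb an error), it follows that $g_{i+1}=f_{5/2,mp^{2i}}\in\Z[[q,q^{-1}]]$ on the nose. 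Transporting this through the lift gives $H_1|U_p^{l-1}=p^{3(l-1)}H_l$ with $H_l\equiv0\pmod{p^{3}}$, whence the modulus $p^{3l}$. Without this integral-basis input (or an equivalent substitute), the compounding you need in step (3) is asserted rather than proved.
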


When $c\notin\{0,1728\}$, one observes something different, with elliptic curves coming into play. Let $C/\Q$ be an elliptic curve. For a good prime $p$ of $C$, define
\begin{align*}
    a_p(C)\deq p+1-|C(\F_p)|.
\end{align*}
A good prime $p$ of $C$ is called an \emph{ordinary} (resp.~\emph{supersingular}) prime of $C$ if $p\nmid a_p(C)$ (resp.~$p\divides a_p(C)$). For $p\geq 5$, this is equivalent to the condition that $a_p(C)\neq 0$ (resp.~$a_p(C)=0$).

\begin{theorem}
\label{k=4-good-prime-theorem}
Let $C/\Q$ be an elliptic curve with $j(C)\notin\{0,1728\}$ and let $p$ be a good prime of~$C$ with $p\geq 5$ and $v_p(j(C))=0=v_p(j(C)-1728)$. Then,
\begin{align*}
    a_p\bigg(\frac{E_4}{j-j(C)}\bigg)\;\equiv\;a_p(C)^2\pmod{p}.
\end{align*}
\end{theorem}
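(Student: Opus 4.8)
The plan is to compute $a_p(E_4/(j-c)) \bmod p$ by a Frobenius manipulation that collapses it to a single constant term, and then to recognise that constant term as a truncated hypergeometric series whose value is the symmetric-square Frobenius trace. Throughout write $c=j(C)$; the hypotheses $v_p(c)=0=v_p(c-1728)$ and $c\neq 0,1728$ guarantee $c\in\Z_p^\times$ with $\bar c\not\equiv 0,1728\pmod p$, so $j-c$ reduces to an element of $\F_p((q))$. The first step is the reduction. In $\F_p((q))$ the $p$-power map is the Frobenius $q\mapsto q^p$, so $(j-c)^p=V_p(j-c)$ and hence $\frac{1}{j-c}=(j-c)^{p-1}\,V_p\!\big(\frac{1}{j-c}\big)$. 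Applying $U_p$ and using $U_p(A\cdot V_pB)=B\cdot U_pA$ gives, mod $p$, $U_p\big(\tfrac{E_4}{j-c}\big)\equiv \tfrac{1}{j-c}\,U_p\big(E_4(j-c)^{p-1}\big)$. Now $E_4(j-c)^{p-1}$ has a pole only at the cusp, of order $p-1<p$, so every surviving $U_p$-term has nonnegative $q$-order; reading off the coefficient of $q$ and using $\frac{1}{j-c}=q+O(q^2)$ together with the fact that $U_p$ fixes constant terms yields
\[ a_p\!\Big(\frac{E_4}{j-c}\Big)\;\equiv\;[q^0]\big(E_4(j-c)^{p-1}\big)\pmod p. \]

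For the second step I would evaluate this constant term as a residue and pass from $q$ to the hypergeometric variable $t=1728/j$. Using $\frac{dq}{q}=-\frac{\Delta}{E_4^2E_6}\,dj$, the identity $\frac{\Delta}{E_4E_6}=E_4^{1/2}/\sqrt{j(j-1728)}$, and the classical uniformisation $E_4=\smallpFq{2}{1}{\tfrac1{12},\tfrac5{12}}{1}{1728/j}^{4}$, the constant term becomes $\pm\,[t^{p-1}]\big\{(1728-ct)^{p-1}g(t)\big\}$, where $g(t)=\smallpFq{2}{1}{\tfrac1{12},\tfrac5{12}}{1}{t}^{2}(1-t)^{-1/2}=E_4^2/E_6$ is a weight-$2$ form. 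Writing $t_0=1728/c$ and using $\binom{p-1}{k}\equiv(-1)^k$, one has $(1728-ct)^{p-1}\equiv c^{p-1}(t_0-t)^{p-1}\equiv c^{p-1}\sum_{k=0}^{p-1}t_0^{\,p-1-k}t^k\pmod p$, so extracting $[t^{p-1}]$ collapses the whole expression to the truncation $g_{<p}(t_0):=\sum_{m=0}^{p-1}([t^m]g)\,t_0^{\,m}$. Since $c^{p-1}\equiv 1$, this gives
\[ a_p\!\Big(\frac{E_4}{j-c}\Big)\;\equiv\;\pm\, g_{<p}\big(1728/j(C)\big)\pmod p. \]

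The third step is to identify $g_{<p}(t_0)$ with $a_p(C)^2$. By Clausen's formula $\smallpFq{2}{1}{\tfrac1{12},\tfrac5{12}}{1}{t}^{2}=\smallpFq{3}{2}{\tfrac16,\tfrac56,\tfrac12}{1,1}{t}$, the holomorphic solution of the Picard--Fuchs equation of $\Sym^2$ of the universal elliptic curve over the $t$-line. By the Dwork/unit-root congruence for this rank-$3$ local system, the truncation at $t_0$ of its suitably normalised holomorphic period computes, mod $p$, the unit eigenvalue of Frobenius on $\Sym^2H^1$; for eigenvalues $\alpha,\beta$ of $C$ with $\alpha\beta=p$ this is $\alpha^2\equiv a_p(C)^2\pmod p$, which also fixes the sign as $+$ since the answer is manifestly a square. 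The exclusions $j(C)\neq 0,1728$ and $v_p(c)=0=v_p(c-1728)$ are exactly what force $t_0\in\F_p\setminus\{0,1\}$, so the family is smooth at the reduction and the congruence applies uniformly, with supersingular $C$ giving $0$ on both sides.

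The crux is Step 3, and specifically the \emph{normalisation}: the geometric $\Sym^2$-period feeding the unit-root congruence must be matched with the weight-$2$ form $E_4^2/E_6=\smallpFq{2}{1}{\tfrac1{12},\tfrac5{12}}{1}{t}^{2}(1-t)^{-1/2}$ produced in Step 2, i.e. one must justify the extra factor $(1-t)^{-1/2}=E_4^{3/2}/E_6$ relative to the bare square $\smallpFq{2}{1}{\tfrac1{12},\tfrac5{12}}{1}{t}^{2}$. Concretely this is a congruence between truncated hypergeometric series mod $p$, which I expect to establish by comparing $g_{<p}(t_0)$ with the square of the truncated Legendre period $\smallpFq{2}{1}{\tfrac12,\tfrac12}{1}{\lambda}$ under the six-fold cover $c=j(E_\lambda)$, using Clausen's formula together with the classical algebraic transformations relating the $(\tfrac1{12},\tfrac5{12})$ and $(\tfrac12,\tfrac12)$ hypergeometric functions and checking that these survive at the level of degree-$(<p)$ truncations mod $p$. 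Steps 1 and 2 are robust formal manipulations; the genuine work, and the main obstacle, lies in this hypergeometric identification.
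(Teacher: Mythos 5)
Your proposal is correct in outline and reaches the paper's key intermediate result --- the truncated hypergeometric formula of \Cref{hypergeom-Q-theorem} (proved as \Cref{k=4-hypergeom-theorem}) --- by a genuinely different route. The paper writes $a_n(E_4/(j-c))=P_{4,n}(c)$ for the Faber-type polynomials attached to the two-variable kernel $G_4(z,\tau)$, uses the Hecke relation $T_{np,-2}=T_{n,-2}T_{p,-2}-p^{-3}T_{n/p,-2}$ to prove $P_{4,p}(j)\equiv(E_{10}/\Delta)^{p-1}\pmod{p}$ as $q$-series, and only then brings in Fricke--Klein and Clausen. Your derivation --- the reduction $(j-c)^{-1}\equiv(j-c)^{p-1}\,V_p\big((j-c)^{-1}\big)\pmod{p}$, the identity $U_p(A\cdot V_pB)=B\cdot U_pA$, and the observation that $U_p$ kills the principal part of $E_4(j-c)^{p-1}$ --- lands on the equivalent statement $a_p\equiv[q^0]\big(E_4(j-c)^{p-1}\big)$ more directly and without the Hecke bookkeeping, and your residue computation in $t=1728/j$ then reproduces the paper's answer: your factor $(1-t)^{-1/2}$, truncated mod $p$, is exactly the paper's prefactor $(c(c-1728))^{(p-1)/2}$ via $\binom{-1/2}{m}\equiv\binom{(p-1)/2}{m}\pmod{p}$ and the vanishing mod $p$ of the ${}_3F_2$ coefficients in the range $(p^{\,}-1)/6<m\leq p-1$. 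What your route gives up is the multiplicativity $a_{np}\equiv a_p\thin a_n^p\pmod{p}$ for all $n$ and the clean reduction of weights $6,8,10,14$ to weight $4$, which the $P_{k,n}$ formalism yields for free and which the paper needs for the stronger \Cref{good-prime-theorem}; for the statement at hand only $a_p$ is required, so this costs you nothing. On the final identification of the truncated series with $a_p(C)^2$ you are in the same position as the paper: the paper also only sketches this step (via the curve $C_1:y^2+xy=x^3-\lambda/432$, the congruence $a_{\fr{P}}(C_1)\equiv{}_2F_1(\tfrac16,\tfrac56;1;\lambda)$ truncated at $\N(\fr{P})-1$, and a truncated Clausen formula, with a footnote deferring details to a separate note), and your proposed mechanism --- Dwork's unit-root congruence for the $\Sym^2$ local system combined with truncated Clausen and the algebraic transformations between the $(\tfrac{1}{12},\tfrac{5}{12})$ and $(\tfrac12,\tfrac12)$ parameters --- is the same hypergeometric machinery; you correctly isolate the normalisation of the period as the crux. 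The one loose end specific to your write-up is the sign: it is a single universal constant coming out of the residue computation and should be fixed by direct bookkeeping (or by one numerical instance, since it is independent of $C$ and $p$) rather than by the ``manifestly a square'' remark, which by itself does not exclude the possibility $a_p\equiv-a_p(C)^2$.
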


Here the appearance of $a_p(C)^2$ already hints at the link between $\frac{E_4}{j-j(C)}$ and $\Sym^2C$, since the trace of Frobenius at $p$ on $\Sym^2C$ is $a_p(C)^2-p$, which is congruent to $a_p(C)^2$ modulo $p$. When $p$ is supersingular, \Cref{k=4-good-prime-theorem} \emph{a priori} only implies that $a_p(\frac{E_4}{j-j(C)})\equiv 0\pmod{p}$, while in fact numerical observations suggest the following stronger congruence.

\begin{conjecture}
\label{k=4-supersingular-prime-conjecture}
Let $C/\Q$ be an elliptic curve with $j(C)\notin\{0,1728\}$ and let $p$ be a supersingular prime of~$C$ with $v_p(j(C))=0=v_p(j(C)-1728)$. Then,
\begin{align*}
    a_p\bigg(\frac{E_4}{j-j(C)}\bigg)\;\equiv\;0\pmod{p^2}.
\end{align*}
\end{conjecture}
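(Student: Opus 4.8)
The plan is to refine \Cref{k=4-good-prime-theorem} by a single $p$-adic digit and then to observe that, at a supersingular prime, the quantity governing that refinement vanishes identically. Put $C$ in Legendre form $y^2=x(x-1)(x-\lambda)$ with $\lambda\in\Z_p$; the hypotheses $v_p(j(C))=0=v_p(j(C)-1728)$ ensure that $\lambda$ reduces to a parameter away from $0,1,\infty$ and the special points, so that the Hasse-invariant truncation $H_p(\lambda)=\sum_{k=0}^{(p-1)/2}\binom{(p-1)/2}{k}^2\lambda^k$ is well defined and Igusa's congruence $a_p(C)\equiv(-1)^{(p-1)/2}H_p(\lambda)\pmod p$ applies. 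Combined with \Cref{k=4-good-prime-theorem} this yields the mod-$p$ identity $a_p(\tfrac{E_4}{j-j(C)})\equiv H_p(\lambda)^2\pmod p$, so one may write
\[
a_p\Big(\tfrac{E_4}{j-j(C)}\Big)\eq H_p(\lambda)^2+p\thin B_p(\lambda)
\]
for a $p$-integral $B_p(\lambda)$. The goal is to prove that $B_p(\lambda)\equiv 0\pmod p$ whenever $p$ is supersingular, i.e.\ whenever $H_p(\lambda)\equiv 0\pmod p$. Granting this, supersingularity makes both summands divisible by $p^2$ — the first because it is a square, the second because then $p\thin B_p(\lambda)\equiv 0\pmod{p^2}$ — which is exactly \Cref{k=4-supersingular-prime-conjecture}.

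For the first step I would revisit the computation behind \Cref{k=4-good-prime-theorem}, which presents $a_p(\tfrac{E_4}{j-j(C)})$ modulo $p$ as a truncated hypergeometric sum in $\lambda$, and carry the same residue/$U_p$ computation to precision $p^2$ rather than $p$. Concretely one extracts $a_p$ as the coefficient of $q$ in $U_p(\tfrac{E_4}{j-j(C)})$ and expands $\tfrac{1}{j-j(C)}$ through the relation between $j$ and $\lambda$, retaining the $O(p^2)$ terms to obtain a closed form for $B_p(\lambda)$ as a twisted truncated hypergeometric expression. The cleanest way to see that the leading part is a square is the period interpretation: the relevant period of $\Sym^2 C$ is the symmetric square of the period lattice of $C$, so modulo $p$ the sum factors as $H_p(\lambda)^2$ by a Clausen-type identity, and the content of Step 1 is to make this factorization precise to second order.

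The crux, and the step I expect to be the main obstacle, is proving $B_p(\lambda)\equiv 0\pmod p$ on the supersingular locus. I would attempt the stronger statement that, as a polynomial in $\lambda$ over $\F_p$, $B_p(\lambda)$ is divisible by $H_p(\lambda)$; equivalently, that the mod-$p$ modular form represented by the second $p$-adic digit of $a_p(\tfrac{E_4}{j-j(C)})$ is divisible by the Hasse invariant, hence vanishes at every supersingular point. This is a genuine supercongruence rather than a formal manipulation, since truncation and squaring do not commute: $B_p(\lambda)$ collects precisely the cross terms running past the index $(p-1)/2$, and these must be controlled modulo $p^2$, presumably via a WZ/creative-telescoping certificate for $B_p$ or an analysis of the parameters through the Gross--Koblitz formula and Dwork's congruences. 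The Galois side supplies both a check and the reason to expect the result: $H_p(\lambda)^2\equiv a_p(C)^2\pmod p$ is the trace of Frobenius on $H^1(C)^{\otimes 2}$, and at a supersingular prime its eigenvalues $\alpha^2,\alpha\beta,\beta\alpha,\beta^2$ become $-p,p,p,-p$, so this trace is not merely divisible by $p^2$ but vanishes exactly — the cancellation the conjecture detects. Should the hypergeometric bookkeeping prove intractable for general $\lambda$, a fallback is to mimic the CM arguments of \Cref{cm-section}: identify the half-integral-weight Borcherds--Shimura preimage of $\tfrac{E_4}{j-j(C)}$ and read off the $p^2$-divisibility from divisibility of its coefficients, though for a generic supersingular $C$ the hypergeometric route appears the more robust.
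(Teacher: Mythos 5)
First, note that the paper does not prove this statement: it appears only as \Cref{k=4-supersingular-prime-conjecture}, explicitly presented as a numerical observation, so there is no proof of the author's to compare yours against. Judged on its own terms, your proposal is not a proof but a reformulation. Writing $a_p\big(\tfrac{E_4}{j-j(C)}\big)=H_p(\lambda)^2+p\,B_p(\lambda)$ is always possible once one has \Cref{k=4-good-prime-theorem} together with Igusa's congruence (and the square $H_p(\lambda)^2$ is indeed divisible by $p^2$ at a supersingular prime), but the conjecture is then \emph{exactly equivalent} to the assertion that $B_p(\lambda)\equiv 0\pmod{p}$ on the supersingular locus. You identify this as ``the crux'' and ``the main obstacle'' and then only list candidate techniques (WZ certificates, Gross--Koblitz, Dwork congruences, divisibility by the Hasse invariant, a Borcherds--Shimura preimage) without carrying any of them out. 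No new leverage is gained: the remark preceding the conjecture in the paper already observes that \Cref{k=4-good-prime-theorem} gives divisibility by $p$ only, and isolating the second $p$-adic digit does not by itself explain why that digit should vanish. The promised ``closed form for $B_p(\lambda)$ as a twisted truncated hypergeometric expression'' is never produced; note that the proof of \Cref{hypergeom-Q-theorem} in \Cref{good-prime-proof-section} works with $q$-expansions modulo $p$ throughout (e.g.\ via $g_{np}\equiv g_n|V_p\pmod{p^{k-1}}$ followed by replacing $V_p$ with the $p$-th power map modulo $p$), and lifting that chain of congruences to modulus $p^2$ is a genuinely different and harder computation, not a routine refinement; the Frobenius-eigenvalue heuristic you give is consistent with the paper's philosophy but proves nothing.

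There are also smaller set-up issues you would need to address: a general $C/\Q$ need not admit a Legendre model over $\Q_p$ (the $2$-torsion need not be rational, so $\lambda$ may live in a ramified extension, where the ``second $p$-adic digit'' bookkeeping changes), and the Legendre curve is only a quadratic twist of $C$, which is harmless for $a_p(C)^2$ but must be tracked if one wants an identity rather than a congruence. These are repairable; the missing proof that $p\mid B_p(\lambda)$ at supersingular $\lambda$ is not, and it is precisely the content of the conjecture.
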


In general, the connection between $\frac{E_4}{j-j(C)}$ and $\Sym^2C$ can be visualized via the following longer congruence, which involves suitable linear combinations of several coefficients of $\frac{E_4}{j-j(C)}$.

\begin{conjecture}
\label{k=4-asd-conjecture}
Let $C/\Q$ be an elliptic curve with $j(C)\notin\{0,1728\}$ and let $p$ be a good prime of $C$ with  $v_p(j(C))=0=v_p(j(C)-1728)$. Let $\alpha_p,\beta_p$ be two roots of $X^2-a_p(C)X+p$ and write
\begin{align*}
    (X-\alpha_p^2)(X-\alpha_p\beta_p)(X-\beta_p^2)\;=:\;X^3+c_{p,1}X^2+c_{p,2}X+c_{p,3}.
\end{align*}
Write $F=\frac{E_4}{j-j(C)}$. Then, for all $n,l\in\Z^+$,
\begin{align*}
    a_{np^l}(F)+c_{p,1}\thin a_{np^{l-1}}(F)+c_{p,2}\thin a_{np^{l-2}}(F)+c_{p,3}\thin a_{np^{l-3}}(F)\;\equiv\;0\pmod{p^{3l-3}}.
\end{align*}
\end{conjecture}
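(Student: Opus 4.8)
The plan is to read the asserted congruence as a Cayley--Hamilton relation for the Frobenius on $\Sym^2 C$ and to establish it $p$-adically. Because $\alpha_p\beta_p = p$, the numbers $\alpha_p^2,\alpha_p\beta_p,\beta_p^2$ are exactly the Frobenius eigenvalues on $\Sym^2 H^1(C)$, so $X^3+c_{p,1}X^2+c_{p,2}X+c_{p,3}$ is the characteristic polynomial of $\Frob_p$ on $\Sym^2 C$; explicitly $c_{p,1}=p-a_p(C)^2$, $c_{p,2}=p\thin(a_p(C)^2-p)$ and $c_{p,3}=-p^3$. The claim is then precisely that the $q$-expansion of $F$, acted on by $U_p$, satisfies this polynomial modulo $p^{3l-3}$ -- that is, $F$ behaves like a combination of $U_p$-eigenforms whose eigenvalues are the three Frobenius slopes $0,1,2$ (ordinary case) of $\Sym^2 C$. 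The target modulus $p^{3l-3}=p^{3(l-1)}$ matches the total slope $\val_p(\det(\Frob_p\mid\Sym^2 C))=\val_p(p^3)=3$ accumulated over $l-1$ steps.

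First I would make the coefficients explicit. With $c=j(C)$, the Asai--Kaneko--Ninomiya generating function
\[
\frac{E_4(\tau)^2 E_6(\tau)/\Delta(\tau)}{j(\tau)-c}\eq\sum_{m\geq 0}\Phi_m(c)\thin q^m,\qquad c=j(\tau_C),
\]
where $\Phi_m$ is the $m$-th (monic, degree $m$) Faber polynomial in $j$, gives after dividing by $E_4^2E_6/\Delta$ the finite formula
\[
a_n(F)\eq\sum_{m=0}^{n-1}b_{n-m}\thin\Phi_m(c),\qquad \sum_{k\geq 1}b_k\thin q^k\deq\frac{\Delta}{E_4 E_6}.
\]
Thus each coefficient is a $\Q$-linear combination, with universal weights $b_k$, of the values $\Phi_m(c)$ of Faber polynomials at $c=j(C)$; it is the $p$-adic behaviour of $\Phi_m(c)$ along the progression selecting $a_{np^l}(F)$ that must carry the local data of $C$ at $p$. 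The congruence $a_p(F)\equiv a_p(C)^2\equiv\Tr(\Frob_p\mid\Sym^2 C)\pmod p$ of \Cref{k=4-good-prime-theorem} is the $l=1$ reduction mod $p$ of the desired recursion, and the hypergeometric input behind it -- truncated ${}_2F_1$-sums computing $a_p(C)\bmod p$ -- is what must be amplified.

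The heart of the argument would be to upgrade this mod-$p$ identity to a $p$-adic statement for the entire family $\{a_{np^l}(F)\}_{l\geq 0}$, and the natural machine is Dwork's theory. The values $\Phi_m(c)$ are expansion coefficients of a period of $C$ -- a solution of the hypergeometric/Picard--Fuchs equation of the universal elliptic curve specialised at $c$ -- and Dwork's congruences relate such coefficients along $p$-power progressions through the crystalline Frobenius acting on $H^1_{\dr}(C/\zp)$. Forming the symmetric square of the $2\times 2$ Frobenius matrix produces the eigenvalues $\alpha_p^2,\alpha_p\beta_p,\beta_p^2$, the Hodge/Newton filtration of $\Sym^2 H^1_{\dr}$ supplies the slopes $0,1,2$, and the Cayley--Hamilton identity for this matrix, transported through the coefficient formula, yields the three-term recursion; the slope filtration together with the usual estimate that $U_p$ gains $p$-divisibility equal to the accumulated slope is what should deliver the modulus $p^{3(l-1)}$. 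The stronger two-term CM congruences of \Cref{supercongruence-k=4-j=0-1728} should correspond to the additional relations among $\alpha_p^2,p,\beta_p^2$ forced by complex multiplication.

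I expect the main obstacle to be exactly this passage for \emph{non-CM} $j(C)$. At the CM poles of \Cref{supercongruence-k=4-j=0-1728} the point $\tau_C$ is a CM point, so the Borcherds--Shimura lift and explicit Hecke theory pin down both the Frobenius eigenvalues and the exact power of $p$; for a generic $j(C)\in\Q$ the point $\tau_C$ carries no such structure, and one must run the Dwork-cohomological argument uniformly in $c$, controlling the overconvergence of the hypergeometric period near the supersingular locus and the precise valuations of the tail of the finite sum for $a_{np^l}(F)$. Making these valuation estimates sharp enough to reach $p^{3l-3}$ rather than a weaker modulus, and fitting the supersingular primes -- where the slopes degenerate to $1,1,1$ and \Cref{k=4-supersingular-prime-conjecture} predicts extra $p$-divisibility -- into the same framework, is the delicate quantitative core.
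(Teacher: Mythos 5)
This statement is one of the paper's conjectures: the paper offers no proof of it, only numerical evidence and the heuristic (made explicit in \Cref{higher-pole-section}) that $U_p$ acting on $\Span\{E_4/(j-j(C))^r\}_{1\leq r\leq 3}$ should mirror $\Frob_p$ acting on $\Sym^2C$, with a pointer to the cohomological picture of Brown--Fonseca and to the Scholl/Kazalicki--Scholl method as the likely route to a proof. So there is no argument in the paper to measure your proposal against; the only question is whether your proposal itself closes the gap. It does not.

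What you have correct is the same heuristic the paper already records, made somewhat more explicit. The identification of $X^3+c_{p,1}X^2+c_{p,2}X+c_{p,3}$ with the characteristic polynomial of $\Frob_p$ on $\Sym^2C$, and the values $c_{p,1}=p-a_p(C)^2$, $c_{p,2}=p(a_p(C)^2-p)$, $c_{p,3}=-p^3$, agree with the paper's $P_p(X)$ for $k=4$. The finite formula $a_n(F)=\sum_m b_{n-m}\Phi_m(c)$ via the Faber-polynomial generating function is a correct formal identity, essentially equivalent to the paper's $a_n(F_{4,c})=P_{4,n}(c)$ with $P_{4,n}(j)=n^{3}(E_{10}/\Delta)|T_{n,-2}\,/\,(E_{10}/\Delta)$. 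And \Cref{k=4-good-prime-theorem} is indeed the mod-$p$ shadow of the recursion --- though note that for $l=1$ the conjectured modulus is $p^{0}$, so that theorem does not actually establish any nontrivial instance of the present statement; the first nonvacuous case is $l=2$ modulo $p^{3}$, which is already beyond everything proved in the paper.

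The gap is the entire quantitative content. Everything past mod $p$ rests on the sentence that ``the natural machine is Dwork's theory,'' but no mechanism is supplied. You do not exhibit the numbers $a_{np^l}(F)$ (or the vector formed from the three forms $E_4/(j-j(C))^r$) as matrix entries of an iterated Frobenius structure; you do not explain why Dwork-type congruences, which govern expansion coefficients of Picard--Fuchs solutions as power series in a deformation parameter, would control the values of the polynomials $P_{4,np^l}$ at the single point $c=j(C)$; and you give no valuation estimates that would yield the exponent $3l-3$ rather than, say, $l$. You flag this passage yourself as ``the delicate quantitative core,'' which is an accurate self-assessment: that core is precisely the open problem, and leaving it to an unspecified appeal to crystalline Frobenius and slope filtrations means the proposal is a plausible research program consistent with the paper's heuristics, not a proof.
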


Congruences of this form are usually called the Atkin--Swinnerton-Dyer (ASD) congruences. This term originally refers to three-term congruences of the form
\begin{align*}
    a_{np}-A_p\thin a_n+\chi(p)p^{k-1}a_{n/p}\;\equiv\;0\pmod{p^{(k-1)(v_p(n)+1)}}
\end{align*}
for some sequence $(a_n)$. In general, it also refers to congruences of the form
\begin{align*}
    a_{np^l}+c_{p,1}\thin a_{np^{l-1}}+c_{p,2}\thin a_{np^{l-2}}+\cdots+c_{p,s}\thin a_{np^{l-s}}\;\equiv\;0\pmod{p^\bullet}
\end{align*}
for some suitable power of $p$. Atkin and Swinnerton-Dyer \cite{atkin-swinnerton-dyer} first came across this type of congruences when studying the coefficients of modular forms on non-congruence subgroups, whose result was later generalized by Scholl \cite{scholl-asd}. In \cite{kazalicki-scholl}, Kazalicki and Scholl demonstrated certain ASD congruences for weakly holomorphic modular forms, and as an example, showed that $F=E_4^6/\Delta-1464E_4^3$ satisfies that
\begin{align*}
    a_{np}(F)-\tau(p)a_n(F)+p^{11}a_{n/p}(F)\;\equiv\;0\pmod{p^{11v_p(n)}},
\end{align*}
where $\tau$ is the Ramanujan tau function. More recently, Allen, Long, and Saad \cite{allen-long-saad} established certain ASD congruences for meromorphic modular forms on finite index subgroups of~$\SL_2(\Z)$. For an overview of the ASD congruences, we refer to the paper by Li--Long \cite{li-long}.

We end the list of phenomena with an explicit connection between the $p$-th coefficient of $\frac{E_4}{j-c}$ and the truncated hypergeometric sum associated to the datum $\big((\frac{1}{2},\frac{1}{6},\frac{5}{6}),(1,1)\big)$
\begin{align*}
    \pFq{3}{2}{\frac{1}{2},\frac{1}{6},\frac{5}{6}}{1,1}{\frac{1728}{c}}_{p-1}\eq\sum_{m=0}^{p-1}\frac{(6m)!}{(m!)^3(3m)!}\thin c^{-m},
\end{align*}
where we adopt notations from \Cref{hypergeometric-subsection}. This connection also constitutes a key part of the proof of \Cref{k=4-good-prime-theorem}.

\begin{theorem}
\label{hypergeom-theorem-intro}
Let $c\in\Q$ and let $p\geq 5$ be a prime with $v_p(c)=0$. Then,
\begin{align*}
    a_p\bigg(\frac{E_4}{j-c}\bigg)&\;\equiv\;\big(c(c-1728)\big)^{\frac{p-1}{2}}\cdot\sum_{m=0}^{p-1}\frac{(6m)!}{(m!)^3(3m)!}\thin c^{-m}\pmod{p}.
\end{align*}
\end{theorem}

\subsection{Main results}
\label{main-result-section}

We now discuss the main results of this paper. We will fix the weight $k\in\{4,6,8,10,14\}$ for ease of exposition, though the results are proved for all weights.

The first result is a generalization of \Cref{k=4-good-prime-theorem} on meromorphic modular forms with a simple pole.

\begin{theorem}
\label{good-prime-theorem}
Let $k\in\{4,6,8,10,14\}$, $C/\Q$ be an elliptic curve with $j(C)\notin\{0,1728\}$, and $p\geq 5$ be a good prime of $C$ with $v_p(j(C))=0=v_p(j(C)-1728)$. Then, for all $n\in\Z^+$,
\begin{align*}
    a_{np}\bigg(\frac{E_k}{j-j(C)}\bigg)\;\equiv\;a_p(C)^{k-2}a_n\bigg(\frac{E_k}{j-j(C)}\bigg)\pmod{p}.
\end{align*}
\end{theorem}
\begin{remark}
This theorem is conjecturally true for $p=2,3$. In the case of a general weight $k$, conjecturally one can sometimes also remove the assumption $v_p(j(C))=0$ or $v_p(j(C)-1728)=0$ depending on $k\pmod{12}$ (see \Cref{good-prime-assumption-remark}).
\end{remark}
\begin{remark}
\label{good-prime-hypergeom-remark}
We will prove a suitable generalization of this theorem that holds for all weights and for all elliptic curves over number fields (\Cref{good-prime-theorem-number-field}). The key ingredient of proof is the hypergeometric connection, i.e., \Cref{hypergeom-theorem-intro}. 
\end{remark}

The second result is on the supercongruences and the magnetic property of certain meromorphic modular forms of weight~$k$ with a CM pole of order $k/2$. In order to describe the result, we first introduce some notations.

Recall that for a (nearly holomorphic) modular form $f(\tau)$ of weight $k_f$, we may view $f$ as a function in $\tau$ and $\ov{\tau}$ and define the \emph{non-holomorphic derivative} as
\begin{align*}
    (\partial_\tau f)(\tau)\deq \frac{1}{2\pi i}\frac{\partial f}{\partial \tau}-\frac{k_f}{4\pi \Im(\tau)}\cdot f(\tau).
\end{align*}
It is easy to check that $\partial_\tau f$ is modular of weight $k_f+2$. Thus, it is possible to apply the operator $\partial_\tau$ recursively to a modular form.

Let $D<0$ be a discriminant with class number $1$, $\alpha_D\in\clh$ be a CM point of discriminant $D$, $\cl{O}_D$ be the imaginary quadratic order of discriminant~$D$, and $w_D:=|\cl{O}_D^\times|$. For $k\in\{4,6,8,10,14\}$, let \footnote{This two-variable modular form $G_k(z,\tau)$ occurred in many places throughout the literature. We refer to \cite[p.~98]{asai-kaneko-ninomiya} for a detailed discussion and the references therein. It also plays a key role in Zagier's duality \cite{zagier-zagierduality,duke-jenkins-zagierduality}.}
\begin{align}
    G_k(z,\tau)\deq\frac{E_k(z)\cdot\frac{E_{14-k}}{\Delta}(\tau)}{j(z)-j(\tau)}
\end{align}
and
\begin{align}
    \cl{G}_{k,D}(z)\deq\frac{2}{w_D}\cdot(\partial^{\frac{k-2}{2}}_\tau G_k)(z,\alpha_D).
\end{align}
Here $G_k(z,\tau)$ is modular of weight $k$ and $2-k$ with respect to $z$ and $\tau$ respectively, so $(\partial^{\frac{k-2}{2}}_\tau G_k)(z,\tau)$ is of weight $0$ with respect to $\tau$. Hence, $\cl{G}_{k,D}(z)$ is independent of the choice of~$\alpha_D$.

\begin{theorem}
\label{cm-middle-supercongruence-intro}
Let $k\in\{4,6,8,10,14\}$. Let $D<0$ be a discriminant of class number $1$ and write $D=A^2D_0$ for some $A\in\Z$ and some fundamental discriminant $D_0<0$. Let
\begin{align*}
    \wt{\cl{G}}_{k,D}\eq\begin{cases}
            |D_0|^{-\frac{k}{4}}\cl{G}_{k,D}&\text{ if }k\equiv0\pmod{4},\\
            |D_0|^{\frac{k-2}{4}}\cl{G}_{k,D}&\text{ if }k\equiv 2\pmod{4}.
    \end{cases}
\end{align*}
Then, $\wt{\cl{G}}_{k,D}$ satisfies the following.
\begin{enumerate}[label=\rm{(\arabic*)}]
    \item $\wt{\cl{G}}_{k,D}$ has integer Fourier coefficients and is $\frac{k-2}{2}$-magnetic, i.e., for all $n\in\Z^+$,
    \begin{align*}
        n^{\frac{k-2}{2}}\thin\big|\thin a_n(\wt{\cl{G}}_{k,D}).
    \end{align*}
    \item Let $p$ be a prime with $p\nmid A$. Then, for all $n,l\in\Z^+$,
    \begin{align*}
    a_{np^l}(\wt{\cl{G}}_{k,D})\;\equiv\;\big(\big(\tfrac{D}{p}\big)p\big)^{\frac{k-2}{2}}a_{np^{l-1}}(\wt{\cl{G}}_{k,D})\pmod{p^{(k-1)l}},
    \end{align*}
    where $(\frac{\cdot}{p})$ denotes the Legendre symbol.
\end{enumerate}
\end{theorem}
\begin{remark}
We will prove a suitable generalization of this theorem that holds for all weights and for all discriminants $D<0$ (\Cref{magnetic-theorem}).
\end{remark}
\begin{remark}
\label{k=6-j=0-remark}
This theorem implies the supercongruences satisfied by $\frac{E_4}{j}$ and $\frac{E_4}{j-1728}$, namely \Cref{supercongruence-k=4-j=0-1728}, by choosing $k=4$ and $D=-3$ or $D=-4$ respectively. If one chooses $k=6$ and $D=-3$, then one obtains the following nice supercongruence for $\frac{E_6}{j}$: for all primes $p\geq 5$ and all $n,l\in\Z^+$,
\begin{align*}
    a_{np^l}\bigg(\frac{E_6}{j}\bigg)\;\equiv\;p^2\thin a_{np^{l-1}}\bigg(\frac{E_6}{j}\bigg)\pmod{p^{5l}}.
\end{align*}
In this case, this theorem also implies that $\frac{384E_6}{j}$ is $2$-magnetic, though the $2$-magnetic property of $\frac{E_6}{j}$ was already proved in \cite[Theorem~2]{pasol-zudilin}. We refer to \Cref{magnetic-theorem-example} for more examples of magnetic modular forms deduced from this theorem.
\end{remark}

\subsection{Structure of the paper}
\label{structure-section}

As mentioned before, the first four sections will only focus on the cases of weight $k\in\{4,6,8,10,14\}$, and the remaining four sections will concern general weights. We will also mostly focus on the case when the elliptic curve $C/\Q$ satisfies that $j(C)\notin\{0,1728\}$, and we will carry the assumption that $v_p(j(C))=0=v_p(j(C)-1728)$ when investigating the $p$-adic properties.

\Cref{simple-pole-section} will focus on the case of having a simple pole, where we raise the conjectural generalizations of \Cref{good-prime-theorem} through a supersingular-ordinary dichotomy. We will also discuss the case when the simple pole is at a CM point, where one can already observe some three-term ASD congruences.

\Cref{higher-pole-section} will discuss the case of having a pole of higher order, and more specifically, will focus on the following space of modular forms
\begin{align*}
    \MMF_{k,C}\deq\Span\bigg\{\frac{E_k}{j-j(C)},\;\frac{E_k}{(j-j(C))^2},\ldots,\frac{E_k}{(j-j(C))^{k-1}}\bigg\}.
\end{align*}
We will make comparisons between the space $\MMF_{k,C}$ with the $U_p$ action and the space $\Sym^{k-2}C$ with the Frobenius action, and as a generalization of \Cref{k=4-asd-conjecture}, we will conjecture a suitable ASD congruence satisfied by modular forms in $\MMF_{k,C}$, which is given by the characteristic polynomial of the Frobenius action on $\Sym^{k-2}C$.

\Cref{cm-section} will focus on the CM case. For a discriminant $D<-4$ (of class number $1$), we will specifically focus on the case of having a pole (of order $\leq k-1$) at a CM point $\alpha_D$ of discriminant~$D$, or equivalently, the space $\MMF_{k,D}:=\MMF_{k,C}$ when $C/\Q$ is a CM elliptic curve with CM by the imaginary quadratic order of discriminant $D$. We will construct a (rational) basis $\{G_{k,D}^{(r)}\}_{r=1}^{k-1}$ for the space $\MMF_{k,D}$, with each $G_{k,D}^{(r)}$ having a pole at $\alpha_D$ of order~$r$, such that, roughly speaking, the basis is conjecturally
\begin{enumerate}[label=(\arabic*)]
    \item diagonal under the action of (powers of) $U_p$ for ordinary primes $p$, and
    \item anti-diagonal under the action of (powers of) $U_p$ for supersingular primes $p$
\end{enumerate}
via explicit supercongruences, where the (anti-)eigenvalues are described explicitly using the $p$-adic analogue of the Chowla--Selberg periods. Moreover, up to some nonzero integer multiple, each $G_{k,D}^{(r)}$ is conjecturally $(r'-1)$-magnetic, where $r'=\min(r,k-r)$. At the end of \Cref{cm-section}, we will also discuss the case when $D\in\{-3,-4\}$, or equivalently, $j(C)\in\{0,1728\}$.

The last four sections will focus on general weights and proofs of the main results. \Cref{other-weight-section} will discuss the generalization to other weights via the help of \emph{relations} and restate the main results and conjectures for general weights. \Cref{notation-section} will recall the notations and preliminaries that are needed for the proofs of the main results. \Cref{good-prime-proof-section} will discuss the hypergeometric connection, namely \Cref{hypergeom-theorem-intro}, and use this to prove a generalization of \Cref{good-prime-theorem} for all weights and for elliptic curves over number fields. \Cref{magnetic-proof-section} will discuss the proof of a generalization of \Cref{cm-middle-supercongruence-intro} for all weights and all discriminants $D<0$ using the Borcherds--Shimura lift.

Throughout the paper, we also include many numerical examples under the help of LMFDB \cite{lmfdb} and PARI/GP \cite{pari}.

\subsection{Related work}

One of the main motivations of this paper is to obtain an explicit (though possibly conjectural) connection between meromorphic modular forms and symmetric powers of elliptic curves through numerically verifiable congruences. As mentioned before, the connection itself was already discussed in Brown--Fonseca \cite{brown-fonseca}, which built a comprehensive theory for meromorphic modular forms that is very closely related to this paper.

Indeed, in view of Kazalicki--Scholl \cite{kazalicki-scholl}, it is natural to expect the congruences that link meromorphic modular forms to symmetric powers of elliptic curves to be in the form of the ASD congruences. In this direction, the work of Allen--Long--Saad \cite{allen-long-saad} established certain ASD congruences for meromorphic modular forms on finite index subgroups of $\SL_2(\Z)$ by extending the methods of Scholl \cite{scholl-asd} and Kazalicki--Scholl \cite{kazalicki-scholl}. They also treated the CM case in detail and deduced the existence of the (anti-)diagonal basis, though without explicitly identifying the basis and the (anti-)eigenvalues. Indeed, their main results should partially imply some of the congruences conjectured in this paper, at least when the weight $k\in\{4,6,8,10,14\}$ and when $p\geq k-1$.\footnote{One should compare the appearances of $p\geq k-1$ here and below with the appearance of factorials in \Cref{supersingular-prime-conjecture-higher} and \Cref{p-adic-gamma-supersingular-conjecture-higher}.} However, it is not immediately clear whether the remaining cases follow as well, since our approach to general weights is different from theirs. 

Another main issue that this paper encounters is the lack of Hecke operators on the space of meromorphic modular forms, or specifically, on the space $\MMF_{k,C}$. A reasonable workaround that one may expect is via the theory of $p$-adic modular forms. From this viewpoint, the work of Bordignon \cite{bordignon} related meromorphic modular forms to overconvergent $p$-adic modular forms and described the explicit action of the $U_p$ operator via comparison theorems between rigid and de Rham cohomology, making it possible to deduce certain congruences of meromorphic modular forms by using the integrality arising from crystalline cohomology, and hence potentially resolving some of the conjectures in this paper when $p\geq k-1$.

\subsection*{Acknowledgements}

The author would like to thank many people for useful discussions on this project, including Kilian B\"onisch, Steven Charlton, Chi Hong Chow, Henri Cohen, H\r{a}vard Damm-Johnsen, Vasily Golyshev, Wen-Ching Winnie Li, Yingkun Li, Ling Long, Alexandre Maksoud, Danylo Radchenko, Berend Ringeling, Fernando Rodriguez Villegas, Hasan Saad, and Jan Vonk. The author is in particular grateful to Don Zagier and Wadim Zudilin for all the useful ideas on proofs and generalizations.

This project originated from discussions during the International Conference on
Modular Forms and $q$-Series at University of Cologne, and during the follow-up workshop at the Hausdorff Research Institute for Mathematics to the trimester program ``Periods in Number Theory, Algebraic Geometry and Physics'' funded by the Deutsche Forschungsgemeinschaft (DFG, German Research Foundation) under Germany's Excellence Strategy – EXC-2047/1 – 390685813. Part of the later discussions also took place during the author's visit to the Abdus Salam International Centre for Theoretical Physics hosted by Vasily Golyshev and Don Zagier, and during the author's visit to Louisiana State University hosted by Ling Long and partially supported by the Simons Foundation grant MP-TSM-00002492.

\section{Simple poles}
\label{simple-pole-section}

We will start with the simplest case, i.e., the case of meromorphic modular forms with precisely one pole that is simple and at a non-cuspidal point.

Let $k\in\{4,6,8,10,14\}$ and let $C/\Q$ be an elliptic curve (with $j(C)\notin\{0,1728\}$). Write
\begin{align*}
    F_{k,C}\deq\frac{E_k}{j-j(C)}\eq\sum_{n=1}^{\infty}a_n(F_{k,C})q^n.
\end{align*}
We first recall that \Cref{good-prime-theorem} implies that for all good primes $p\geq 5$ of $C$ with $v_p(j(C))=0=v_p(j(C)-1728)$ and all $n\in\Z^+$,
\begin{align*}
    a_{np}(F_{k,C})\;\equiv\;a_p(C)^{k-2}a_n(F_{k,C})\pmod{p}.
\end{align*}
From a different point of view, this describes the behavior of $F_{k,C}$ under the $U_p$ operator. In particular, it says that $F_{k,C}$, when viewed as a mod $p$ modular form, is a $U_p$-eigenform with eigenvalue $a_p(C)^{k-2}$. In general, one can also describe the behavior of $F_{k,C}$ under the $U_p^l$ operator if one considers the supersingular and the ordinary cases separately. 

\begin{conjecture}
\label{supersingular-prime-conjecture-simple}
Let $k\in\{4,6,8,10,14\}$, $C/\Q$ be an elliptic curve with $j(C)\notin\{0,1728\}$, and $p$ be a supersingular prime of $C$ with $v_p(j(C))=0=v_p(j(C)-1728)$. Then, for all $n,l\in\Z^+$,
\begin{align*}
    a_{np^l}(F_{k,C})\;\equiv\;p^{k-2}a_{np^{l-2}}(F_{k,C})\pmod{p^{(k-1)l-1}},
\end{align*}
where $a_m(\;\cdot\;):=0$ if $m\notin\Z$.
\end{conjecture}

\begin{conjecture}
\label{ordinary-prime-conjecture}
Let $k\in\{4,6,8,10,14\}$, $C/\Q$ be an elliptic curve with $j(C)\notin\{0,1728\}$, and $p$ be an ordinary prime of $C$ with $v_p(j(C))=0=v_p(j(C)-1728)$. Then, for all $n,l\in\Z^+$,
\begin{align*}
    a_{np^l}(F_{k,C})\;\equiv\;u_p(C)^{k-2}a_{np^{l-1}}(F_{k,C})\pmod{p^l},
\end{align*}
where $u_p(C)$ is the root of $X^2-a_p(C)X+p$ which is a $p$-adic unit.

Moreover, if $C/\Q$ has CM, then for all $n,l\in\Z^+$,
\begin{align*}
    a_{np^l}(F_{k,C})\;\equiv\;u_p(C)^{k-2}a_{np^{l-1}}(F_{k,C})\pmod{p^{(k-1)l}}.
\end{align*}
\end{conjecture}

\begin{remark}
While \Cref{hypergeom-theorem-intro} already presents a connection between meromorphic modular forms and hypergeometric functions, the type of congruences that appear in \Cref{supersingular-prime-conjecture-simple} and \Cref{ordinary-prime-conjecture} may also remind readers of hypergeometric (super)congruences.
\end{remark}

In the case of CM elliptic curves, it is possible to unify the supersingular and ordinary dichotomy via a single, but slightly weaker, three-term ASD congruence. There are two viewpoints in the CM setting: one can interpret everything in terms of CM elliptic curves or in terms of negative discriminants. We will elaborate on both viewpoints here.

\begin{conjecture}
\label{asd-cm-conjecture}
Let $k\in\{4,6,8,10,14\}$. Let $C/\Q$ be an elliptic curve with CM by some imaginary quadratic field $K$ and suppose that $j(C)\notin\{0,1728\}$. Let $\psi_C$ be the Hecke character of $K$ corresponding to~$C$ and let $\Theta$ be the Hecke eigenform of weight $k-1$ corresponding to the Hecke character~$\psi_C^{k-2}$. Let $p$ be a good prime of $C$ with \mbox{$v_p(j(C))=0=v_p(j(C)-1728)$}. Then, for all $n,l\in\Z^+$,
\begin{align*}
    a_{np^l}(F_{k,C})-a_p(\Theta)\thin a_{np^{l-1}}(F_{k,C})+\chi_K(p)p^{k-2}a_{np^{l-2}}(F_{k,C})\;\equiv\;0\pmod{p^{(k-1)l-1}},
\end{align*}
where $\chi_K$ is the quadratic character corresponding to the extension $K/\Q$ and $a_m(\;\cdot\;):=0$ if $m\notin\Z$.
\end{conjecture}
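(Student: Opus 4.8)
The plan is to deduce this three-term congruence from the sharper two-term congruences of \Cref{supersingular-prime-conjecture} and \Cref{ordinary-prime-conjecture}; indeed the modulus is deliberately relaxed to $p^{(k-1)l-1}$ precisely so that this deduction succeeds. The first step is to make the Frobenius data explicit. Since $C$ has CM by $K$ and $p$ is a good prime, $p$ is either inert or split in $K$ (the finitely many ramified primes being treated separately). Writing $\alpha_p,\beta_p$ for the roots of $X^2-a_p(C)X+p$, the Hecke eigenform $\Theta$ attached to $\psi_C^{k-2}$ satisfies $a_p(\Theta)=\alpha_p^{k-2}+\beta_p^{k-2}$ when $p$ splits and $a_p(\Theta)=0$ when $p$ is inert, while $\chi_K(p)p^{k-2}$ equals $(\alpha_p\beta_p)^{k-2}=p^{k-2}$ in the split case and $-p^{k-2}$ in the inert case. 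In particular, in the split case the characteristic polynomial $X^2-a_p(\Theta)X+\chi_K(p)p^{k-2}$ has roots $\alpha_p^{k-2}$ and $\beta_p^{k-2}$.

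In the inert (supersingular) case the claim is immediate: with $a_p(\Theta)=0$ and $\chi_K(p)=-1$, the left-hand side collapses to $a_{np^l}(F_{k,C})-p^{k-2}a_{np^{l-2}}(F_{k,C})$, which is exactly the content of \Cref{supersingular-prime-conjecture} modulo $p^{(k-1)l-1}$. In the split (ordinary) case, take $\alpha_p=u_p(C)$ to be the $p$-adic unit root, so $v_p(\beta_p^{k-2})=k-2$, and factor the left-hand side as
\begin{align*}
\big(a_{np^l}(F_{k,C})-\alpha_p^{k-2}a_{np^{l-1}}(F_{k,C})\big)-\beta_p^{k-2}\big(a_{np^{l-1}}(F_{k,C})-\alpha_p^{k-2}a_{np^{l-2}}(F_{k,C})\big).
\end{align*}
By the CM refinement of \Cref{ordinary-prime-conjecture}, the first bracket is $\equiv0\pmod{p^{(k-1)l}}$ and (for $l\geq2$) the second bracket is $\equiv0\pmod{p^{(k-1)(l-1)}}$; since the prefactor $\beta_p^{k-2}$ carries valuation $k-2$, the second term vanishes modulo $p^{(k-1)(l-1)+(k-2)}=p^{(k-1)l-1}$, and hence so does the whole expression. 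The case $l=1$ is identical except that the term $a_{np^{l-2}}$ is absent, and the same bookkeeping (together with the $p$-integrality of $a_n(F_{k,C})$) gives the bound $p^{k-2}=p^{(k-1)\cdot1-1}$. The single power of $p$ lost relative to the sharp ordinary modulus is exactly the deficit contributed by $\beta_p^{k-2}$, which explains the exponent $(k-1)l-1$.

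The real difficulty is thus not this elementary reduction but the two input congruences, which are the genuinely deep statements. I would attack them by the Borcherds--Shimura lift, in the spirit of \cite{li-neururer,pasol-zudilin}: realize $F_{k,C}$ as the lift of a half-integral weight form whose principal part is supported at the CM point attached to $K$, so that the coefficients $a_n(F_{k,C})$ become explicit sums of values of the Hecke character $\psi_C$ over ideals of $K$. The three-term Hecke recursion satisfied by $\Theta$ then propagates to these coefficients, and the sharp $p$-adic valuations underlying the moduli $p^{(k-1)l}$ and $p^{(k-1)l-1}$ must be read off from the ordinary, respectively supersingular, Frobenius structure on $\Ind_K^\Q\psi_C^{k-2}$. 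Extracting these valuations with the exact exponents, rather than merely up to a bounded error, is the crux; this is where a Dwork-type argument on the hypergeometric local system of \Cref{hypergeom-Q-theorem}, or Scholl's cohomological approach to ASD congruences, would have to be carried through.
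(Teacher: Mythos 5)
Your proposal matches the paper's own justification (the remark following the conjecture) essentially verbatim: the paper likewise splits into the inert/supersingular case, where the statement collapses to \Cref{supersingular-prime-conjecture}, and the split/ordinary case, where it is factored through the unit root $u_p(C)$ and deduced from the CM refinement of \Cref{ordinary-prime-conjecture}, with the lost power of $p$ coming exactly from $v_p(\beta_p^{k-2})=k-2$. (Your factorization with the $-\beta_p^{k-2}$ prefactor is the algebraically correct form of the paper's displayed identity, which carries a sign slip; your closing paragraph on proving the input congruences goes beyond what the paper attempts, since it leaves them as conjectures.)
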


\begin{remark}
\label{explicit-asd-cm-conjecture}
By Deuring's criterion (see \cite{deuring-lifting} or \cite[Chapter~II, Exercise~2.30]{silverman-advanced}), a good prime $p$ of $C$ is ordinary (resp.~supersingular) if and only if $\chi_K(p)=1$, i.e., $p$ splits in $K$ (resp.~$\chi_K(p)=-1$, i.e., $p$ is inert in $K$). The coefficient $a_p(\Theta)$ for a good prime $p$ of $C$ can then be written down explicitly as
\begin{align*}
    a_p(\Theta)\eq\begin{cases}
    \alpha_p^{k-2}+\beta_p^{k-2} &\text{ if }\chi_K(p)=1, \\
    0 &\text{ if }\chi_K(p)=-1,
    \end{cases}
\end{align*}
where $\alpha_p$ and $\beta_p$ are the two roots of $X^2-a_p(C)X+p$ when $\chi_K(p)=1$.

From this explicit description, one can readily deduce \Cref{asd-cm-conjecture} from the supersingular (\Cref{supersingular-prime-conjecture-simple}) and ordinary (\Cref{ordinary-prime-conjecture}) dichotomy. In the supersingular case, i.e., $\chi_K(p)=-1$, \Cref{asd-cm-conjecture} reduces to that
\begin{align*}
    a_{np^l}(F_{k,C})\;\equiv\; p^{k-2}a_{np^{l-2}}(F_{k,C})\pmod{p^{(k-1)l-1}},
\end{align*}
which follows from \Cref{supersingular-prime-conjecture-simple}. In the ordinary case, i.e., $\chi_K(p)=1$, \Cref{asd-cm-conjecture} reduces to that
\begin{align*}
    a_{np^l}(F_{k,C})-(\alpha_p^{k-2}+\beta_p^{k-2})a_{np^{l-1}}(F_{k,C})+p^{k-2}a_{np^{l-2}}(F_{k,C})\;\equiv\;0\pmod{p^{(k-1)l-1}}.
\end{align*}
If we let $u_p(C)$ denote the root of $X^2-a_p(C)X+p$ which is a $p$-adic unit, then we can rewrite the equation above as
\begin{align*}
    &\left(a_{np^l}(F_{k,C})-u_p(C)^{k-2}a_{np^{l-1}}(F_{k,C})\right) \\
    &\hspace{1.5cm}+\bigg(\frac{p}{u_p(C)}\bigg)^{k-2}\left(a_{np^{l-1}}(F_{k,C})-u_p(C)^{k-2}a_{np^{l-2}}(F_{k,C})\right)\;\equiv\;0\pmod{p^{(k-1)l-1}},
\end{align*}
which then follows from \Cref{ordinary-prime-conjecture}.
\end{remark}

Now, we turn to the negative discriminants. Let $D<-4$ be a discriminant with class number~$1$, $\clo_D$ be the imaginary quadratic order of discriminant~$D$, and $\alpha_D\in\clh$ be a fixed CM point of discriminant $D$. For $k\in\{4,6,8,10,14\}$, write
\begin{align*}
    F_{k,D}\deq\frac{E_k}{j-j(\alpha_D)}\eq\sum_{n=1}^\infty a_n(F_{k,D})q^n.
\end{align*}

\begin{conjecture}
\label{a-sd-cm-discriminant}
Let $k\in\{4,6,8,10,14\}$ and let $D<-4$ be a discriminant with class number~$1$. Consider the CM modular form
\begin{align*}
    \Theta\deq \Theta_{k-1,D}\deq\frac{1}{2}\sum_{\alpha\in\clo_D}\alpha^{k-2}q^{|\N(\alpha)|}
\end{align*}
of weight $k-1$. Let $p$ be a prime with $p\nmid D$ and $v_p(j(\alpha_D))=0=v_p(j(\alpha_D)-1728)$. Then, for all $n,l\in\Z^+$,
\begin{align*}
    a_{np^l}(F_{k,D})-a_p(\Theta)a_{np^{l-1}}(F_{k,D})+\big(\tfrac{D}{p}\big)p^{k-2}a_{np^{l-2}}(F_{k,D})\;\equiv\;0\pmod{p^{(k-1)l-1}},
\end{align*}
where $a_m(\;\cdot\;):=0$ if $m\notin\Z$.
\end{conjecture}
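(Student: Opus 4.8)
The plan is to derive \Cref{a-sd-cm-discriminant} from the elliptic-curve formulation \Cref{asd-cm-conjecture} by making the passage between class-number-one discriminants and CM elliptic curves over $\Q$ explicit. Since $D<-4$ has class number~$1$, the value $j(\alpha_D)$ is a rational integer, and as $D\neq-3,-4$ we have $j(\alpha_D)\notin\{0,1728\}$; I would fix an elliptic curve $C/\Q$ with $j(C)=j(\alpha_D)$ and good reduction at the prime $p$ in question (possible after a quadratic twist, which alters neither $F_{k,D}=F_{k,C}$ nor, as explained below, the quantity $a_p(\Theta)$). Such a $C$ has CM by the order $\clo_D$ inside $K=\Q(\sqrt D)$, the hypothesis $p\nmid D$ forces $p$ to be prime to the conductor of $\clo_D$, and the Kronecker symbol $\big(\tfrac{D}{p}\big)$ is exactly the value $\chi_K(p)$ of the quadratic character attached to $K/\Q$. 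Thus \Cref{a-sd-cm-discriminant} follows from \Cref{asd-cm-conjecture} once I identify the Hecke eigenvalue $a_p(\Theta_{k-1,D})$ with the quantity $a_p(\Theta)$ appearing there, for every prime $p\nmid D$ (the congruence involves no other coefficient of $\Theta$).

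To make this identification I would use the explicit description of $a_p(\Theta)$ recorded in \Cref{explicit-asd-cm-conjecture}. If $p$ is inert in $K$ (equivalently supersingular, $\chi_K(p)=-1$), then $\clo_D$ has no element of norm $p$, so $a_p(\Theta_{k-1,D})=0$, matching $a_p(\Theta)=0$. If $p$ splits (ordinary, $\chi_K(p)=1$), then by class number~$1$ a prime of $\clo_D$ above $p$ is principal, say generated by $\pi$ with $\N(\pi)=p$; since $D<-4$ the unit group of $\clo_D$ is $\{\pm1\}$, so the elements of $\clo_D$ of norm $p$ are precisely $\pm\pi,\pm\ov{\pi}$. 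As $k-2\in\{2,4,6,8,12\}$ is even, the defining factor $\tfrac12$ yields $a_p(\Theta_{k-1,D})=\pi^{k-2}+\ov{\pi}^{\,k-2}$. On the other hand \Cref{explicit-asd-cm-conjecture} gives $a_p(\Theta)=\alpha_p^{k-2}+\beta_p^{k-2}$, where $\alpha_p,\beta_p$ are the roots of $X^2-a_p(C)X+p$, namely the Frobenius eigenvalues of the reduction $\ov C$ at $p$. Because $p$ is prime to the conductor and $\ov C$ is ordinary, reduction induces an isomorphism $\End(C)=\clo_D\simarrow\End(\ov C)$, so the Frobenius is an element of $\clo_D$ of norm $p$ and $\{\alpha_p,\beta_p\}=\{\pm\pi,\pm\ov{\pi}\}$ is a conjugate pair. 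The even parity of $k-2$ then removes the ambiguous sign and gives $a_p(\Theta)=\pi^{k-2}+\ov{\pi}^{\,k-2}=a_p(\Theta_{k-1,D})$. Substituting $F_{k,D}=F_{k,C}$, $a_p(\Theta_{k-1,D})=a_p(\Theta)$, and $\big(\tfrac{D}{p}\big)=\chi_K(p)$ into \Cref{asd-cm-conjecture} yields the asserted congruence modulo $p^{(k-1)l-1}$; alternatively one can bypass \Cref{asd-cm-conjecture} and feed the same data directly into the supersingular and ordinary dichotomy of \Cref{supersingular-prime-conjecture,ordinary-prime-conjecture}, exactly as in \Cref{explicit-asd-cm-conjecture}.

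The step I expect to be the main obstacle is pinning the Frobenius into the correct order $\clo_D$, which is a genuine issue for the non-maximal discriminants $D\in\{-12,-16,-27,-28\}$: there $\clo_D\subsetneq\clo_K$, and $\clo_K$ contains norm-$p$ elements of the ``wrong'' trace, so a mismatch $a_p(\Theta)\neq a_p(\Theta_{k-1,D})$ would occur unless the Frobenius really lies in $\clo_D$. This is exactly guaranteed by the standard fact that ordinary reduction at a prime coprime to the conductor induces an isomorphism on endomorphism rings, forcing $\End(\ov C)=\clo_D$. Granting this input from the theory of complex multiplication, the remainder rests only on the even parity of $k-2$ — which is also what makes $\Theta_{k-1,D}$, and the curve-side object, independent of the quadratic twist used to select $C$ — together with elementary bookkeeping of norm-$p$ elements. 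Naturally, the argument is only as unconditional as \Cref{asd-cm-conjecture} itself, so this reduction transfers the real difficulty to proving the supersingular and ordinary dichotomies.
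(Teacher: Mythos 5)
This statement is a conjecture in the paper; no proof is given, only numerical evidence and the parallel elliptic-curve formulation. Your conditional reduction is sound and matches exactly what the paper itself does implicitly: the identification $a_p(\Theta_{k-1,D})=\pi^{k-2}+\ov{\pi}^{k-2}$ (split case) or $0$ (inert case) is precisely the remark following the conjecture, the passage through the supersingular/ordinary dichotomy is the content of \Cref{explicit-asd-cm-conjecture}, and your attention to the non-maximal discriminants $D\in\{-12,-16,-27,-28\}$ via the isomorphism $\End(C)\cong\End(\ov{C})$ at primes coprime to the conductor is the correct technical input for that dictionary. Just be clear that this does not prove the statement unconditionally --- as you acknowledge, it only transfers the difficulty to \Cref{supersingular-prime-conjecture,ordinary-prime-conjecture}, which remain open.
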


\begin{remark}
The coefficients $a_p(\Theta)$ for primes $p\nmid D$ can be written down explicitly as
\begin{align*}
    a_p(\Theta)\eq\begin{cases}
    \pi^{k-2}+\ov{\pi}^{k-2}=\tr_{K/\Q}(\pi^{k-2}) &\text{ if }\big(\tfrac{D}{p}\big)=1, \\
    0 &\text{ if }\big(\tfrac{D}{p}\big)=-1,
    \end{cases}
\end{align*}
where $K=\Q(\sqrt{D})$ and $p$ with $\big(\frac{D}{p}\big)=1$ factors as $p=\pi\ov{\pi}$ in $\cl{O}_D$. 
\end{remark}

One can also write \Cref{ordinary-prime-conjecture} in the CM case explicitly in terms of negative discriminants.

\begin{conjecture}
\label{cong-reln-cm-discriminant}
Let $k\in\{4,6,8,10,14\}$ and let $D<-4$ be a discriminant with class number~$1$. Let $p$ be a prime with $\big(\frac{D}{p}\big)=1$ and \mbox{$v_p(j(\alpha_D))=0=v_p(j(\alpha_D)-1728)$}, and write $p=\pi\ov{\pi}$ for some $\pi\in\cl{O}_D$. Then, for all $n,l\in\Z^+$,
\begin{align*}
    a_{np^l}(F_{k,D})&\;\equiv\;\pi^{k-2}a_{np^{l-1}}(F_{k,D})\pmod{\ov{\pi}^{(k-1)l}}.
\end{align*}
\end{conjecture}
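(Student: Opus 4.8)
The plan is to establish only the first congruence,
\[
a_{np^l}(F_{k,D})\;\equiv\;\pi^{k-2}a_{np^{l-1}}(F_{k,D})\pmod{\ov{\pi}^{(k-1)l}},
\]
and to deduce the second by complex conjugation: since $j(\alpha_D)\in\Q$ (class number one) the coefficients $a_n(F_{k,D})$ are rational, while conjugation on $K=\Q(\sqrt{D})$ interchanges $\pi$ and $\ov{\pi}$. Because $\big(\frac{D}{p}\big)=1$, the prime $p=\pi\ov{\pi}$ splits, and from $\ord_{\ov{\pi}}(\pi)+\ord_{\ov{\pi}}(\ov{\pi})=\ord_{\ov{\pi}}(p)=1$ we get $\ord_{\ov{\pi}}(\pi)=0$; thus $\pi^{k-2}$ is an $\ov{\pi}$-adic unit, and the assertion is precisely that, $\ov{\pi}$-adically and to precision $\ov{\pi}^{(k-1)l}$, the form $F_{k,D}$ is an ordinary $U_p$-eigenform with unit eigenvalue $\pi^{k-2}$.

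The eigenvalue and the base case are cheap. For $p\geq5$, \Cref{good-prime-conjecture} is a theorem, giving $a_{np}(F_{k,D})\equiv a_p(C)^{k-2}a_n(F_{k,D})\pmod{p}$; since $a_p(C)=\pi+\ov{\pi}$ and $\ov{\pi}\equiv0\pmod{\ov{\pi}}$, reduction modulo $\ov{\pi}$ yields the $l=1$ statement modulo $\ov{\pi}$, with the reduction of general $k$ to $k=4$ as in \Cref{good-prime-hypergeom-remark}. Conceptually I would realize $F_{k,D}$ through the Borcherds--Shimura lift, as in the proofs of Li--Neururer \cite{li-neururer} and Pa\c{s}ol--Zudilin \cite{pasol-zudilin}: writing $F_{k,D}$ as the regularized theta lift of a half-integral weight form whose principal part sits at the discriminant $D$ expresses the $a_n(F_{k,D})$ as CM traces attached to $\alpha_D$, whose Hecke data are governed by the weight $k-1$ CM eigenform $\Theta=\Theta_{k-1,D}$ with $a_p(\Theta)=\pi^{k-2}+\ov{\pi}^{k-2}$; modulo $\ov{\pi}$ this again isolates the unit eigenvalue $\pi^{k-2}$.

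The substance is the passage from modulus $\ov{\pi}$ to modulus $\ov{\pi}^{(k-1)l}$, which is a genuine supercongruence: the CM eigenform $\Theta$, being of weight $k-1$ with non-unit root $\ov{\pi}^{k-2}$, only predicts the ordinary congruence rate $\ov{\pi}^{(k-2)l}$, whereas $F_{k,D}$ attains the weight-$k$ rate $\ov{\pi}^{(k-1)l}$ dictated by its modular weight—the gain of one power per level being of the same supercongruence type already present for $D=-3,-4$ in \Cref{supercongruence-k=4-j=0-1728}. To produce it I would pass to the hypergeometric side, upgrading \Cref{hypergeom-Q-theorem} so as to express $a_{np^l}(F_{k,D})$ $p$-adically through truncations of the hypergeometric sum attached to the datum generalizing $\big((\tfrac12,\tfrac16,\tfrac56),(1,1)\big)$ at the CM value $1728/j(\alpha_D)$. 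That connection carries Dwork's Frobenius structure, and at a CM specialization of the parameter the underlying motive acquires multiplication by $K$; its Dwork unit root is then forced to be $\pi^{k-2}$, and the CM produces the characteristic extra $\ov{\pi}$-divisibility responsible for the upgrade from $(k-2)l$ to $(k-1)l$.

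The hard part will be pinning down this exact exponent. Three points demand care. First, $F_{k,D}$ is meromorphic with a pole at the CM point $\alpha_D$, which is ordinary at $p$; the pole therefore lies in the ordinary locus, so $F_{k,D}$ is not overconvergent and the usual Hida--Coleman ordinary machinery does not apply directly—one must isolate and control the polar contribution to the $U_p$-iterates. Second, in the hypergeometric route one must identify the Dwork crystal's Frobenius eigenvalues at the CM point as precisely $\pi^{k-2}$ and $\ov{\pi}^{k-2}$, retaining the full ordinary filtration and not merely their trace $a_p(\Theta)$, and then iterate Dwork's two-term congruence $l$ times while keeping the sharp $\ov{\pi}$-adic valuations. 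Third, one must bound the denominators introduced by the CM period $\Omega_D$ and by the half-integral weight coefficients so that the congruence holds integrally; here the class number one hypothesis, together with the CM supercongruence, should supply the missing power of $\ov{\pi}$. Once the first congruence is proved to precision $\ov{\pi}^{(k-1)l}$, conjugation gives the second to precision $\pi^{(k-1)l}$, and the two together refine the three-term congruence of \Cref{a-sd-cm-discriminant}.
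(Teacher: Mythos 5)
The statement you are proving is \Cref{cong-reln-cm-discriminant}, which the paper leaves as a conjecture: it is presented as the explicit rewriting, in terms of negative discriminants, of \Cref{ordinary-prime-conjecture} in the CM case, and no proof is offered anywhere in the paper (the only supercongruences of this strength actually proved are those of \Cref{cm-middle-supercongruence}, which concern the middle form $G_{k,D}^{(k/2)}$ of pole order $k/2>1$ and therefore never specialize to the simple-pole form $F_{k,D}$ treated here). So there is no argument in the paper to compare yours against, and the question is whether your proposal stands on its own. It does not: it is a research programme, not a proof.

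The parts of your argument that are sound are the cheap ones. The reduction of the second congruence to the first by complex conjugation is correct, since $j(\alpha_D)\in\Q$ forces $a_n(F_{k,D})\in\Q$ and conjugation on $K$ swaps $\pi$ and $\ov{\pi}$; and \Cref{good-prime-theorem} does give $a_{np}(F_{k,D})\equiv a_p(C)^{k-2}a_n(F_{k,D})\pmod{p}$, hence $\equiv\pi^{(k-2)}a_n(F_{k,D})\pmod{\ov{\pi}}$. But note that even the $l=1$ case of the conjecture asserts a congruence modulo $\ov{\pi}^{\thin k-1}$, so what you call the ``base case'' falls short of the actual base case by a factor of $\ov{\pi}^{\thin k-2}$; there is no induction here whose first step you have completed. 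The entire content of the statement is the exponent $(k-1)l$, and everything you say about it --- upgrading \Cref{hypergeom-Q-theorem} to a $p$-adic statement modulo $p^{(k-1)l}$, identifying the Frobenius eigenvalues of a Dwork crystal at the CM specialization as $\pi^{k-2}$ and $\ov{\pi}^{k-2}$ with their exact valuations, controlling the polar contribution given that the pole $\alpha_D$ lies in the \emph{ordinary} locus so that $F_{k,D}$ is not overconvergent, and bounding the denominators from the CM period --- is listed as work to be done rather than done. Each of these is a genuine obstruction (the non-overconvergence point in particular rules out the only standard machinery that produces $U_p$-eigenform congruences of this depth), and none is resolved. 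As written, the proposal proves the congruence modulo $\ov{\pi}$ and conjectures the rest, which is essentially where the paper itself stands.
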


\section{Poles of higher order}
\label{higher-pole-section}

As suggested constantly, meromorphic modular forms of weight $k$ with precisely one pole at a non-cuspidal point should behave similarly to the symmetric $(k-2)$-nd power of the elliptic curve corresponding to the non-cuspidal pole. The discrepancy, however, is that modular forms typically correspond to $2$-dimensional objects from representation theory, while symmetric $(k-2)$-nd powers of elliptic curves are $(k-1)$-dimensional. Indeed, on the modular form side, the ``correct'' space of meromorphic modular forms to consider should be
\begin{align}
    \MMF_{k,C}\deq\Span\bigg\{\frac{E_k}{j-j(C)},\;\frac{E_k}{(j-j(C))^2},\ldots,\frac{E_k}{(j-j(C))^{k-1}}\bigg\}
\end{align}
(for $k\in\{4,6,8,10,14\}$ and $j(C)\notin\{0,1728\}$). This is a $(k-1)$-dimensional space and should ``correspond'' to the $(k-1)$-dimensional space $\Sym^{k-2}C$ on the elliptic curve side.

The consideration of this particular space is motivated by several reasons. It is easy to check that for $r\geq k$, $\frac{E_k}{(j-j(C))^r}$ can be written as a linear combination of $\frac{E_k}{j-j(C)},\ldots,\frac{E_k}{(j-j(C))^{k-1}}$, and $\cl{D}^{k-1}f$ for some modular form $f$ of weight $2-k$, where $\cl{D}=q\frac{\dif}{\dif q}$ is the usual derivative. As the congruence properties of $\cl{D}^{k-1}f$ are easy to understand \footnote{From the viewpoint of the magnetic property, $\cl{D}^{k-1}f$ is trivially $(k-1)$-magnetic, and hence not interesting either.}, it suffices to consider the denominator only up to the $(k-1)$-st power. More importantly, the discussion above yields a non-canonical isomorphism (for $k\in\{4,6,8,10,14\}$ and $j(C)\notin\{0,1728\}$)
\begin{align}
    \Span\bigg\{\frac{E_k}{j-j(C)},\;\frac{E_k}{(j-j(C))^2},\ldots,\frac{E_k}{(j-j(C))^{k-1}}\bigg\}\;\cong\;\frac{S_k^{\mero,C}}{\cl{D}^{k-1}M_{2-k}^{\mero,C}},
\end{align}
where $M_k^{\mero,C}$ denotes the space of meromorphic modular forms of weight $k$ and level $1$ with precisely one pole at (the point on $\clh$ corresponding to) $C$, and $S_k^{\mero,C}$ denotes its subspace consisting of elements with vanishing constant terms in the Fourier expansions. Here a similar quotient where $C$ is replaced by the cusp $\infty$ occurred in Kazalicki--Scholl \cite{kazalicki-scholl}, where they interpreted the quotient cohomologically and proved the ASD congruences of weakly holomorphic modular forms. In our case, a cohomological interpretation of the quotient was discussed in Brown--Fonseca \cite{brown-fonseca}, which was related to $\Sym^{k-2}C$. This suggests that these meromorphic modular forms should be linked to symmetric powers of elliptic curves via the ASD congruences.

The guiding philosophy in our case is that the action of $U_p$ on the modular form side \footnote{Indeed, there is no $U_p$ action on the space $\MMF_{k,C}$, since the $U_p$ operator increases the level and shifts the pole. Still, we will formally consider the $U_p$ operator as defined by $\sum a_nq^n\mapsto \sum a_{pn}q^n$.} should correspond to the action of the Frobenius at $p$ on the elliptic curve side. 
\begin{center}
\begin{tikzcd}[row sep=30pt, nodes={inner sep=7pt}]
\Span\bigg\{\dfrac{E_k}{j-j(C)},\;\dfrac{E_k}{(j-j(C))^2},\ldots,\dfrac{E_k}{(j-j(C))^{k-1}}\bigg\} \arrow[r, leftrightarrow, dashed] \arrow["U_p"'{name=A}, loop, distance=2em, in=290, out=250] & [1.5em] \Sym^{k-2}C \arrow["\Frob_p"'{name=C}, loop, distance=2em, in=295, out=245]
\end{tikzcd}
\end{center}
Let $p$ be a good prime of $C$ and let $P_p(X)$ denote the characteristic polynomial of the Frobenius at $p$ on $\Sym^{k-2}C$. Then,
\begin{align*}
    P_p(X)&\eq(X-\alpha_p^{k-2})(X-\alpha_p^{k-3}\beta_p)\cdots(X-\alpha_p\beta_p^{k-3})(X-\beta_p^{k-2}) \\
    &\;=:\; \sum_{i=0}^{k-1}c_{p,k-1-i}^{(k)}X^i\eq X^{k-1}+\sum_{i=0}^{k-2}c_{p,k-1-i}^{(k)}X^i,
\end{align*}
where $\alpha_p$ and $\beta_p$ are the two roots of $X^2-a_p(C)X+p$. The expectation is that $F=\frac{E_k}{(j-j(C))^r}$ should satisfy the following ASD congruence
\begin{align*}
    a_{np^l}(F)+c_{p,1}^{(k)}\thin a_{np^{l-1}}(F)+c_{p,2}^{(k)}\thin a_{np^{l-2}}(F)+\cdots+c_{p,k-1}^{(k)}\thin a_{np^{l-k+1}}(F)\;\equiv\;0\pmod{p^\bullet}
\end{align*}
for some power of $p$ with exponent depending on $k,l$, and $r$. 

For $r\in\Z^+$, write
\begin{align*}
    F_{k,C}^{(r)}\deq\frac{E_k}{(j-j(C))^r}\eq\sum_{n=1}^\infty a_n(F_{k,C}^{(r)})q^n.
\end{align*}
Then, one has the following conjecture.

\begin{conjecture}
\label{asd-conjecture}
Let $k\in\{4,6,8,10,14\}$, $1\leq r\leq k-1$, and $C/\Q$ be an elliptic curve with $j(C)\notin\{0,1728\}$. Let $p$ be a good prime of $C$ with $v_p(j(C))=0=v_p(j(C)-1728)$. Then, for all $n,l\in\Z^+$,
\begin{align*}
    a_{np^l}(F_{k,C}^{(r)})+c_{p,1}^{(k)}\thin a_{np^{l-1}}(F_{k,C}^{(r)})+\cdots+c_{p,k-1}^{(k)}\thin a_{np^{l-k+1}}(F_{k,C}^{(r)})\;\equiv\;0\pmod{p^{(k-1)l-\frac{1}{2}(k-3)k-r}},
\end{align*}
where $a_m(\;\cdot\;)=0$ if $m\notin\Z$.
\end{conjecture}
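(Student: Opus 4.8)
The plan is to adapt the cohomological strategy of Kazalicki--Scholl \cite{kazalicki-scholl} to the present setting, replacing the cusp $\infty$ by the non-cuspidal point attached to $C$ by means of the identification recalled above and developed in Brown--Fonseca \cite{brown-fonseca}. The first step is to realize the quotient
\begin{align*}
    V\deq\frac{S_k^{\mero,C}}{\cl{D}^{k-1}M_{2-k}^{\mero,C}}
\end{align*}
as a de Rham realization of the motive $\Sym^{k-2}C$, carrying its Hodge (equivalently pole) filtration and, after descent to a suitable integral $p$-adic model, a crystalline Frobenius $\phi$. Each $F_{k,C}^{(r)}$ then determines a class $[F_{k,C}^{(r)}]\in V$, and the order-$r$ pole places this class $r$ steps deep in the pole filtration. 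The formal operator $U_p\colon\sum a_nq^n\mapsto\sum a_{pn}q^n$, although it does not genuinely act on the fixed space (it raises the level and moves the pole), must be promoted to an honest operator on $V$ or on an overconvergent enlargement thereof, via a Hecke correspondence; the essential geometric input is that under the comparison isomorphism this $U_p$ matches $\Frob_p$ acting on $\Sym^{k-2}C$, up to controlled $p$-power denominators.

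Granting this dictionary, the engine behind the congruence is Cayley--Hamilton: the characteristic polynomial $P_p(X)=\prod_{i=0}^{k-2}(X-\alpha_p^{k-2-i}\beta_p^i)=\sum_{i=0}^{k-1}c_{p,k-1-i}^{(k)}X^i$ annihilates $\Frob_p$ on $\Sym^{k-2}C$, so $P_p(\phi)=0$ and hence $P_p(U_p)$ kills $V$. Applying a suitable power of $U_p$ to the relation $P_p(U_p)[F_{k,C}^{(r)}]=0$ and reading off the coefficient of $q^n$ reproduces precisely the combination
\begin{align*}
    a_{np^l}(F_{k,C}^{(r)})+c_{p,1}^{(k)}a_{np^{l-1}}(F_{k,C}^{(r)})+\cdots+c_{p,k-1}^{(k)}a_{np^{l-k+1}}(F_{k,C}^{(r)}),
\end{align*}
so that the entire content of the statement is the $p$-adic size of the discrepancy between the formal $P_p(U_p)$ and the geometric $P_p(\phi)=0$.

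It remains to turn this discrepancy into the sharp exponent $(k-1)l-\tfrac12(k-3)k-r$. I would extract the main term $(k-1)l$ from iterating the Frobenius relation to depth $l$, tracking the contribution of each level against the Newton polygon of $\phi$, whose slopes on $\Sym^{k-2}$ are $0,1,\ldots,k-2$. The constant loss I would account for through the interaction of $\phi$ with the Hodge filtration: a Mazur-type strong-divisibility estimate $\phi(\mathrm{Fil}^i)\subseteq p^i\cdot(\text{lattice})$ makes the filtration jumps contribute a defect of $\big(\sum_{i=0}^{k-2}i\big)-1=\tfrac{(k-1)(k-2)}{2}-1=\tfrac12(k-3)k$, while the order-$r$ pole costs a further $r$ because the representing class sits $r$ steps into the pole filtration. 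The main obstacle I anticipate is exactly this integral comparison between the purely formal $U_p$ and the geometric $\Frob_p$: one must build a $p$-adically integral model of $V$ on which $U_p$ acts and establish the matching with $\phi$ sharply enough to justify each unit of the exponent, rather than merely up to an unspecified bounded power of $p$. As a consistency check and base case for the induction on $l$, the mod-$p$ reduction in the case $l=r=1$, $k=4$ is already pinned down by the hypergeometric congruence of \Cref{hypergeom-Q-theorem} (see also \Cref{good-prime-hypergeom-remark}).
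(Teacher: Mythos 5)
The first thing to note is that \Cref{asd-conjecture} is stated in the paper as a \emph{conjecture}: it is a numerical observation, and the paper contains no proof of it. What the paper does offer is exactly the heuristic you have reproduced --- the dictionary between $U_p$ on $\Span\{E_k/(j-j(C))^i\}_{1\leq i\leq k-1}\cong S_k^{\mero,C}/\cl{D}^{k-1}M_{2-k}^{\mero,C}$ and $\Frob_p$ on $\Sym^{k-2}C$, via the cohomological interpretation of Brown--Fonseca \cite{brown-fonseca} and the method of Kazalicki--Scholl \cite{kazalicki-scholl} --- together with the remark that the $V_p$-formulation $U_p^l\thin P_p^*(V_p)(F)\equiv 0$ is likely the ``correct'' one, and that extending Scholl's machinery to this setting is work in progress by Allen, Long, and Saad. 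So your proposal is aligned with the paper's intended route, but it is a research program rather than a proof, and you should present it as such.

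The genuine gap is the one you yourself flag, and it is not a technicality but the entire content of the statement. Cayley--Hamilton gives $P_p(\phi)=0$ on $\Sym^{k-2}H^1$ with $\Q_p$-coefficients for free; the congruence is an \emph{integral} assertion, and everything hinges on (i) constructing an honest operator on an integral model of the quotient that computes the formal $\sum a_nq^n\mapsto\sum a_{pn}q^n$ on Fourier expansions (the paper stresses that $U_p$ does not act on the fixed space, since it raises the level and moves the pole), (ii) matching it with a crystalline Frobenius on a \emph{specific lattice}, sharply enough to certify every unit of the exponent $(k-1)l-\tfrac12(k-3)k-r$, and (iii) converting membership of $U_p^\bullet P_p(U_p)F_{k,C}^{(r)}$ in $\cl{D}^{k-1}M_{2-k}^{\mero,C}$ plus a $p$-adically small error into the coefficient congruence --- in the Kazalicki--Scholl mechanism the main term $(k-1)l$ comes from the fact that $a_n(\cl{D}^{k-1}g)$ is divisible by $n^{k-1}$, not from the Newton slopes of $\phi$ as your sketch suggests (and those slopes are $0,1,\ldots,k-2$ only in the ordinary case; at a supersingular prime they are all $\tfrac{k-2}{2}$). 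None of these steps is carried out, so the proposal does not establish the statement; it recovers the paper's heuristic and correctly identifies where the difficulty lies. The only cases the paper actually reduces to something else are the supersingular case (from \Cref{supersingular-prime-conjecture-higher}, by an explicit factorization of $P_p$) and the ordinary CM case (from \Cref{ordinary-prime-conjecture-cm}), both of which are themselves conjectural; the only proved input at present is the mod $p$ statement via \Cref{hypergeom-Q-theorem}, as you note.
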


\begin{remark}
If one simply considers the cases when $l\geq k-1$ and writes $s=l-k+1$, then \Cref{asd-conjecture} can be rewritten as for all $s\in\Z_{\geq0}$,
\begin{align*}
    U_p^s \thin P_p(U_p)(F_{k,C}^{(r)})\;\equiv\;0\pmod{p^{(k-1)s+\frac{1}{2}(k^2-k+2)-r}}.
\end{align*}
On the other hand, one may also consider $V_p:\sum a_nq^n\mapsto\sum a_nq^{pn}$ and rewrite \Cref{asd-conjecture} as for all $l\in\Z^+$,
\begin{align*}
    U_p^l \thin P_p^*(V_p)(F_{k,C}^{(r)})\;\equiv\;0\pmod{p^{(k-1)l-\frac{1}{2}(k-3)k-r}},
\end{align*}
where $P_p^*$ is the reciprocal polynomial of $P_p$, i.e., $P_p^*(X)=X^{k-1}P_p(X^{-1})$. \footnote{In view of Kazalicki--Scholl \cite{kazalicki-scholl}, the formulation using $V_p$ may be the ``correct'' one, since their proof of the ASD congruences involves identifying a $V_p$-related operator with the Frobenius at $p$.}
\end{remark}

\begin{remark}
\label{k=6-stronger-asd}
Note that $(k-1)l-\frac{1}{2}(k-3)k-r$ is never positive when $l=1$ and $r=1$. It is thus not possible to recover \Cref{good-prime-theorem} from \Cref{asd-conjecture} by choosing $l=1$ and $r=1$. This is not the only case where the actual congruences appear stronger than the ones predicted by \Cref{asd-conjecture}. For example, let $k=6$ and assume the same conditions as in \Cref{asd-conjecture}. Then, it appears that the following are true: for all $n\in\Z^+$ and $1\leq r\leq 5$,
\begin{align*}
    a_{np^2}(F_{6,C}^{(r)})+c_{p,1}^{(6)}\thin a_{np}(F_{6,C}^{(r)})+c_{p,2}^{(6)}\thin a_n(F_{6,C}^{(r)})&\;\equiv\;0\pmod{p^{4-r}} \\
    a_{np^3}(F_{6,C}^{(r)})+c_{p,1}^{(6)}\thin a_{np^2}(F_{6,C}^{(r)})+c_{p,2}^{(6)}\thin a_{np}(F_{6,C}^{(r)})+c_{p,3}^{(6)}\thin a_n(F_{6,C}^{(r)})&\;\equiv\;0\pmod{p^{7-r}}.
\end{align*}
\end{remark}

We give an example of \Cref{asd-conjecture} when $k=6$ and $r=2$, which also illustrates the stronger congruences described in \Cref{k=6-stronger-asd}.

\begin{example}
Let $k=6$, $r=2$, and $C/\Q$ be the elliptic curve given by $y^2+y=x^3-x^2$ with LMFDB label \href{https://www.lmfdb.org/EllipticCurve/Q/11/a/3}{11.a3} and $j(C)=-4096/11$. We will look at
\begin{align*}
    F\eq F_{6,C}^{(2)}\eq\frac{E_6}{(j+4096/11)^2}.
\end{align*}
In this case,
\begin{align*}
    (k-1)l-\tfrac{1}{2}(k-3)k-r\eq 5l-11.
\end{align*}

Now, let $p=5$, which is a good prime of $C$ and satisfies that $v_p(j(C))=0=v_p(j(C)-1728)$. In this case, we have $a_5(C)=1$, and one computes the characteristic polynomial of the Frobenius at $5$ on $\Sym^4C$ to be
\begin{align*}
    P_5(X)\eq X^5-11X^4-495X^3+12375X^2+171875X-5^{10}.
\end{align*}
Then, one observes numerically that for all $n\in\Z^+$,
{\small
\begin{alignat*}{2}
    0&\thin\equiv\thin a_{5^2n}(F)-11\thin a_{5n}(F)-495\thin a_n(F) &&\pmod{5^2} \\
    0&\thin\equiv\thin a_{5^3n}(F)-11\thin a_{5^2n}(F)-495\thin a_{5n}(F)+12375\thin a_n(F) &&\pmod{5^5} \\
    0&\thin\equiv\thin a_{5^4n}(F)-11\thin a_{5^3n}(F)-495\thin a_{5^2n}(F)+12375\thin a_{5n}(F)+171875\thin a_n(F) &&\pmod{5^9} \\
    0&\thin\equiv\thin a_{5^5n}(F)-11\thin a_{5^4n}(F)-495\thin a_{5^3n}(F)+12375\thin a_{5^2n}(F)+171875\thin a_{5n}(F)-5^{10}\cdot a_n(F) &&\pmod{5^{15}}.
\end{alignat*}}%
Here the congruences when $l=2,3$ are stronger than the ones predicted by \Cref{asd-conjecture}, as discussed in \Cref{k=6-stronger-asd}; the one when $l=4$ is the same as predicted by \Cref{asd-conjecture}; the one when $l=5$ is stronger again, but for a general prime, it does seem that the one when $l=5$ is usually the same as predicted by \Cref{asd-conjecture}.
\end{example}

If one separates the supersingular and ordinary primes as before, then one also observes a generalization of \Cref{supersingular-prime-conjecture-simple}. However, generalizations of \Cref{ordinary-prime-conjecture} seem to only exist in the CM case, which will be discussed in \Cref{cm-section}.

\begin{conjecture}
\label{supersingular-prime-conjecture}
Let $k\in\{4,6,8,10,14\}$, $1\leq r\leq k-1$, and $C/\Q$ be an elliptic curve. Let $p$ be a supersingular prime of $C$ with $v_p(j(C))=0=v_p(j(C)-1728)$. Then, for all $n,l\in\Z^+$,
\begin{align*}
    a_{np^l}(F_{k,C}^{(r)})\;\equiv\;p^{k-2}a_{np^{l-2}}(F_{k,C}^{(r)})\pmod{p^{(k-1)l-r}},
\end{align*}
where $a_m(\;\cdot\;):=0$ if $m\notin\Z$.
\end{conjecture}
\begin{remark}
\label{overconvergent-remark}
When $p$ is supersingular, $F_{k,C}^{(r)}$ may be viewed as an overconvergent $p$-adic modular form, since the pole lies in the supersingular locus. Indeed, for $l\geq 2$, \Cref{supersingular-prime-conjecture} leads to the consideration of $U_p^{l-2}(U_p-p^{k-2})(F_{k,C}^{(r)})$. Here one observes that there is no old cusp form by our choice of $k$ and the operator $U_p-p^{k-2}$ annihilates all the newforms of weight $k$ and level~$p$. Hence, heuristically one should be able to write $(U_p-p^{k-2})(F_{k,C}^{(r)})$ as an asymptotic sum of $U_p$-eigenforms \footnote{We refer to \cite[Section~3.5]{calegari-pmf} and \cite{gouvea-mazur} for discussions on the asymptotic expansions of overconvergent modular forms using $U_p$-eigenforms.} with slopes $\geq k-1$, which then also supports the conjectural congruence satisfied by $U_p^{l-2}(U_p-p^{k-2})(F_{k,C}^{(r)})$. 
\end{remark}
\begin{remark}
\Cref{supersingular-prime-conjecture} implies \Cref{asd-conjecture} in the supersingular case. We sketch a proof here for $l\geq k-1$. For a supersingular prime $p$, the characteristic polynomial $P_p(X)$ is given by
\begin{align*}
    P_p(X)\eq&\prod_{i=0}^{k-2}\big(X-(\sqrt{-p})^{k-2-i}(-\sqrt{-p})^i\big) \\
    \eq &(X-(-p)^{\frac{k-2}{2}})(X^2-p^{k-2})^{\frac{k-2}{2}} \\
    \eq &\bigg(\sum_{i=0}^{\frac{k-4}{2}}\binom{\frac{k-4}{2}}{i}(-p^{k-2})^iX^{k-3-2i}-(-p)^{\frac{k-2}{2}}\sum_{i=0}^{\frac{k-4}{2}}\binom{\frac{k-4}{2}}{i}(-p^{k-2})^iX^{k-4-2i}\bigg)(X^2-p^{k-2}).
\end{align*}
It then follows from \Cref{supersingular-prime-conjecture} and direct computations that for all $s\in\Z_{\geq0}$, 
\begin{align*}
    U_p^s\thin P_p(U_p)(F_{k,C}^{(r)})
    \;\equiv\;0\pmod{p^{(k-1)s+\frac{1}{2}(k^2-k+2)-r}},
\end{align*}
which implies \Cref{asd-conjecture}.
\end{remark}

We give an example of \Cref{supersingular-prime-conjecture} when $k=4$ and $l=1$, In this case, one expects that
\begin{align*}
    a_{np}(F_{4,C}^{(1)})\;\equiv\;0\pmod{p^2}\quad\text{ and }\quad
    a_{np}(F_{4,C}^{(2)})\;\equiv\;0\pmod{p}
\end{align*}
for all $n\in\Z^+$, or equivalently,
\begin{align*}
    U_pF_{4,C}^{(1)}\;\equiv\;0\pmod{p^2}\quad\text{ and }\quad
    U_pF_{4,C}^{(2)}\;\equiv\;0\pmod{p}.
\end{align*}
This example is also related to the heuristic described in \Cref{overconvergent-remark}.

\begin{example}
Let $k=4$. Let $p=13$ and let $C/\Q$ be the elliptic curve given by $y^2=x^3-x^2-4$ with LMFDB label \href{https://www.lmfdb.org/EllipticCurve/Q/56/b/2}{56.b2}. In particular, $C$ is supersingular at $13$ and $j(C)=-4/7$ satisfies that $v_{13}(j(C))=0=v_{13}(j(C)-1728)$. We will now try to write
\begin{align*}
    \frac{E_4}{j+4/7}\pmod{13^2}\quad\text{ and }\quad\frac{E_4}{(j+4/7)^2}\pmod{13}
\end{align*}
in terms of linear combinations of Hecke eigenforms of level $13$ and weight $4$.

Note that there are exactly three Hecke eigenforms in $S_4(\Gamma_0(13))$: the one $f_1$ with rational coefficients and with LMFDB label \href{https://www.lmfdb.org/ModularForm/GL2/Q/holomorphic/13/4/a/a/}{13.4.a.a}, and two embeddings $f_2,f_3$ of the one with coefficients in $\Q(\sqrt{17})$ and with LMFDB label \href{https://www.lmfdb.org/ModularForm/GL2/Q/holomorphic/13/4/a/b/}{13.4.a.b}. In particular,
\begin{align*}
    U_{13}f_1\eq -13 f_1,\quad U_{13}f_2\eq 13f_2,\quad U_{13}f_3\eq 13f_3,
\end{align*}
and
\begin{alignat*}{2}
    f_1&\;\equiv\; q-5q^2-7q^3+17q^4+O(q^5) &&\pmod{13^2} \\
    f_2&\;\equiv\; q-7q^2+25q^3-11q^4+O(q^5) &&\pmod{13^2} \\
    f_3&\;\equiv\; q-5q^2+19q^3-9q^4+O(q^5) &&\pmod{13^2}.
\end{alignat*}
Moreover, it is easy to check that
\begin{align*}
    f_1\;\equiv\;f_3\pmod{13}.
\end{align*}

For $r=1$, we find that
\begin{align}
\label{supersingular-example-r=1}
    \frac{E_4}{j+4/7}\;\equiv\; q+99q^2+32q^3+108q^4+O(q^5)\;\equiv\; 72f_1 + 117f_2 +150f_3\pmod{13^2}.
\end{align}
Hence,
\begin{align*}
    U_{13}\bigg(\frac{E_4}{j+4/7}\bigg)\;\equiv\; 13\cdot(-72f_1+117f_2+150f_3)\;\equiv\;13\cdot(78f_1+117f_2)\;\equiv\;0\pmod{13^2}.
\end{align*}
For $r=2$, we find that
\begin{align}
\label{supersingular-example-r=2}
    \frac{E_4}{(j+4/7)^2}\;\equiv\; q^2+10q^3+q^4+O(q^5)\;\equiv\; 7f_1+6f_2\pmod{13}.
\end{align}
Hence, 
\begin{align*}
    U_{13}\bigg(\frac{E_4}{(j+4/7)^2}\bigg)\;\equiv\; 13\cdot(-7f_1+6f_2)\;\equiv\;0\pmod{13}.
\end{align*}
Here the congruences (\ref{supersingular-example-r=1}) and (\ref{supersingular-example-r=2}) are provable by multiplying both sides by $j+4/7$ or $(j+4/7)^2$ and comparing the Fourier coefficients up to a suitable number of terms, so we in fact obtain a proof of \Cref{supersingular-prime-conjecture} in this special case.
\end{example}

We finish this section with two comments. The first comment is that one should be able to generalize the whole setting to elliptic curves over number fields. In this case, one should then consider congruences modulo powers of some fixed prime ideal $\fr{p}$, and the action of the Frobenius at $\fr{p}$ on the elliptic curve side should correspond to the action of $U_{\N(\fr{p})}$ on the modular form side, where $\N(\fr{p})$ denotes the norm. The second comment is that one should expect a motivic interpretation of the connection between meromorphic modular forms and symmetric powers of elliptic curves~\footnote{This is also why we always write $\Sym^{k-2}C$ instead of $\Sym^{k-2}H^1(C)$ with some specified cohomology, since we would like to think of $\Sym^{k-2}C$ as being the motive attached to the symmetric power of the elliptic curve.}, as discussed in Brown--Fonseca \cite{brown-fonseca}. In particular, the congruences that we observe here should be viewed as visualizations of the motivic connection at finite places, and it may be reasonable to expect visualizations at infinite places linking special values of $L$-functions of symmetric powers of elliptic curves to ``periods'' of meromorphic modular forms.

\section{The CM case, supercongruences, and magnetic modular forms}
\label{cm-section}

We will now specialize to the CM case. Let $D<-4$ be a discriminant with class number~$1$, $\cl{O}_D$ be the imaginary quadratic order of discriminant $D$, $\alpha_D\in\clh$ be a fixed CM point with discriminant $D$. Let $C/\Q$ be an elliptic curve with CM by $\cl{O}_D$ so that $j(C)=j(\alpha_D)$ and let $\psi_C$ be the corresponding Hecke character of $K:=\Q(\sqrt{D})$. For the convenience of the later discussion, we will choose $C/\Q$ in a way that all of its bad primes divide~$D$. By Deuring's criterion, the following are equivalent for a prime $p$ with $p\nmid D$:
\begin{enumerate}[label=(\roman*)]
    \item $p$ is an ordinary prime of $C$ (resp.~$p$ is a supersingular prime of $C$);
    \item $p$ splits in $\cl{O}_D$ (resp.~$p$ is inert in $\cl{O}_D$);
    \item $(\frac{D}{p})=1$ (resp.~$(\frac{D}{p})=-1$).
\end{enumerate}
We will thus use these three conditions interchangeably.

\subsection{A ``canonical'' basis}
\label{cm-basis-section}

As mentioned briefly in the \Cref{structure-section}, we will construct a basis for the space
\begin{align}
    \MMF_{k,D}\deq\Span\bigg\{\frac{E_k}{j-j(\alpha_D)},\;\frac{E_k}{(j-j(\alpha_D))^2},\ldots,\frac{E_k}{(j-j(\alpha_D))^{k-1}}\bigg\}
\end{align}
that is diagonal (resp.~anti-diagonal) with respect to the action of (powers of) $U_p$ via explicit supercongruences for all ordinary (resp.~supersingular) primes $p$, where the (anti-)eigenvalues will be described explicitly using the $p$-adic analogue of the Chowla--Selberg periods.

To motivate the construction, we first focus on the ordinary case. Note that on the elliptic curve side, we have the following splitting (as Galois representations of~$K$)
\begin{align}
\label{cm-elliptic-curve-splitting}
    \Res_{K/\Q}(\Sym^{k-2}C)\;\cong\;\Res_{K/\Q}\big(\Sym^{k-2}(\Ind_{K/\Q}\psi_C)\big)\;\cong\;\bigoplus_{\substack{a+b=k-2 \\ 0\leq a,b\leq k-2}}\psi_C^a\thin\ov{\psi_C^b}.
\end{align}
Let $p$ be a prime that splits in $\cl{O}_D$. Then, for any embedding $\pi:K\hookrightarrow\Q_p$, $\Frob_\pi(=\Frob_p)$ acts on $\psi_C^a\thin\ov{\psi_C^b}$ by $p^bu_p(C)^{a-b}$, where $u_p(C)$ is the root of $X^2-a_p(C)X+p$ which is a $p$-adic unit under the embedding $\pi$.\footnote{Technically, $\Frob_\pi$ acts on one of $\psi_C^a\thin\ov{\psi_C^b}$ and $\psi_C^b\thin\ov{\psi_C^a}$ by $p^bu_p(C)^{a-b}$ depending on the choice of $\psi_C$. We will thus replace $\psi_C$ by $\ov{\psi_C}$ if necessary to ensure that $\Frob_\pi$ acts on $\psi_C^a\thin\ov{\psi_C^b}$ by $p^bu_p(C)^{a-b}$.} This suggests that on the modular form side, the space $\MMF_{k,D}$ should also split into $1$-dimensional pieces $\{V_{a,b}\}_{a+b=k-2}$ equipped with corresponding $U_p$-actions. In other words, there should exist $\{G_{a,b}\}_{a+b=k-2}$ such that
\begin{enumerate}
    \item $\Span\{G_{a,b}\mid a+b=k-2\}=\MMF_{k,D}$;
    \item for all (but finitely many) primes $p$ with $(\frac{D}{p})=1$, $U_pG_{a,b}\equiv p^bu_p(C)^{a-b} G_{a,b}$ modulo some power of $p$, where $u_p(C)$ is the root of $X^2-a_p(C)X+p$ which is a $p$-adic unit.
\end{enumerate}

In order to construct these $G_{a,b}$, the idea is to view the original $\frac{E_k}{j-j(C)}$ as a two variable function $\frac{E_k(z)}{j(z)-j(\tau)}$, multiply it by some auxiliary function $f^\aux(\tau)$, differentiate the product (several times) with respect to the auxiliary variable $\tau$, and evaluate $\tau$ at the CM point $\alpha_D$.

In this case, the choice of the auxiliary function was already revealed in \Cref{main-result-section}. That is, for $k\in\{4,6,8,10,14\}$, we choose the auxiliary function as $f_k^{\aux}(\tau)=\frac{E_{14-k}}{\Delta}(\tau)$ and consider
\begin{align}
    G_k(z,\tau)\deq\frac{E_k(z)}{j(z)-j(\tau)}\cdot f_k^{\aux}(\tau)\eq\frac{E_k(z)\cdot\frac{E_{14-k}}{\Delta}(\tau)}{j(z)-j(\tau)}.
\end{align}
We also recall from \Cref{main-result-section} that the non-holomorphic derivative of a (nearly holomorphic) modular form $f(\tau)$ of weight $k_f$ is defined as
\begin{align*}
    (\partial_\tau f)(\tau)\deq \frac{1}{2\pi i}\frac{\partial f}{\partial \tau}-\frac{k_f}{4\pi \Im(\tau)}\cdot f(\tau).
\end{align*}

Now, if one naively considers the derivative $(\partial_\tau^rG_k)(z,\tau)$ and evaluates $\tau$ at $\alpha_D$, then the resulting modular form will depend on the choice of $\alpha_D$, since $(\partial_\tau^rG_k)(z,\tau)$ is of weight $2-k+2r$ with respect to $\tau$. In order to remove the dependency, we adopt the following normalization.

For $1\leq r\leq k-1$, write
\begin{align}
    G_{k,D}^{(r)}(z)\deq\bigg(\frac{\Delta}{E_{14-k}}(\tau)\cdot\bigg(-\frac{E_4}{E_6}(\tau)\bigg)^{r-1}\cdot (\partial_\tau^{r-1} G_k)(z,\tau)\bigg)\bigg|_{\tau=\alpha_D}.
\end{align}
In this way, $G_{k,D}^{(r)}$ is independent of the choice of $\alpha_D$, has rational coefficients \footnote{The rationality is because $D$ is assumed to have class number $1$ (see also \Cref{k=4-cm-example} and \Cref{k=6-cm-example}).}, and is of the form
\begin{align}
    G_{k,D}^{(r)}\eq\sum_{i=1}^{r-1} A_i\cdot\frac{E_k}{(j-j(\alpha_D))^i}+(r-1)!\cdot j(\alpha_D)^{r-1}\cdot\frac{E_k}{(j-j(\alpha_D))^r}.
\end{align}
Through the following conjecture, one observes that each $G_{k,D}^{(r)}$ takes the role of $G_{k-1-r,r-1}$. The conjecture should also be viewed as a natural generalization of \Cref{ordinary-prime-conjecture} in the CM case.

\begin{conjecture}
\label{ordinary-prime-conjecture-cm}
Let $k\in\{4,6,8,10,14\}$ and let $1\leq r\leq k-1$. Let $D<-4$ be a discriminant with class number $1$ and let $C/\Q$ be an elliptic curve with $j(C)=j(\alpha_D)$. Let $p$ be a prime with $(\frac{D}{p})=1$ and $v_p(j(\alpha_D))=0=v_p(j(\alpha_D)-1728)$. Then, for all $n,l\in\Z^+$,
\begin{align*}
    a_{np^l}(G_{k,D}^{(r)})\;\equiv\; p^{r-1}u_p(C)^{k-2r}a_{np^{l-1}}(G_{k,D}^{(r)})\pmod{p^{(k-1)l}},
\end{align*}
where $u_p(C)$ is the root of $X^2-a_p(C)X+p$ which is a $p$-adic unit.
\end{conjecture}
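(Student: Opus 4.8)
The plan is to exhibit $G_{k,D}^{(r)}$ as the modular avatar of the rank-one summand $\psi_C^{k-1-r}\thin\ov{\psi_C}^{\,r-1}$ of $\Res_{K/\Q}(\Sym^{k-2}C)$ in \eqref{cm-elliptic-curve-splitting}, and to convert its Frobenius eigenvalue into a $U_p$-recursion. By the split-prime action recorded just after \eqref{cm-elliptic-curve-splitting}, for a prime $p$ with $(\tfrac{D}{p})=1$ the Frobenius $\Frob_\pi$ acts on $\psi_C^{k-1-r}\thin\ov{\psi_C}^{\,r-1}$ by $u_p(C)^{(k-1-r)-(r-1)}p^{r-1}=u_p(C)^{k-2r}p^{r-1}$, which is exactly the eigenvalue in the statement. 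The content of the conjecture is therefore that $G_{k,D}^{(r)}$ is a $U_p$-eigenform modulo $p^{(k-1)l}$ with this eigenvalue; here $(k-1)l$ is precisely the Atkin--Swinnerton-Dyer exponent expected for a form of weight $k$, and the single $\zp$-congruence is, after fixing an embedding $K\hookrightarrow\qp$, equivalent to one of the two $\fr{p}$-adic refinements appearing in the $r=1$ case \Cref{cong-reln-cm-discriminant} (recall that $G_{k,D}^{(1)}$ is a scalar multiple of $F_{k,D}$).

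The main engine would be the CM interpretation of the Fourier coefficients. Expanding the kernel as $G_k(z,\tau)=\sum_n c_n(\tau)\thin q_z^n$, each $c_n$ is a weakly holomorphic modular form of weight $2-k$ in $\tau$, and $a_n(G_{k,D}^{(r)})=(\partial_\tau^{r-1}c_n)(\alpha_D)$. By Shimura's theory of CM values of nearly holomorphic modular forms, these are algebraic multiples of a fixed power of the CM period of $K$, the latter being absorbed into the normalisation constant $A_{k,D}^{(r)}$. Crucially, as a generating series for the Zagier-dual bases in weights $k$ and $2-k$ (see \cite{asai-kaneko-ninomiya,duke-jenkins-zagierduality,zagier-zagierduality}), the kernel intertwines the weight-$k$ Hecke action in $z$ with the weight-$(2-k)$ Hecke action in $\tau$, up to an explicit power of the Hecke index. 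I would write $U_p^{(z)}=T_p^{(z)}-p^{k-1}V_p^{(z)}$, transport $T_p^{(z)}$ to the $\tau$-variable, apply $\partial_\tau^{r-1}$, and specialise at $\tau=\alpha_D$. Since $(\tfrac{D}{p})=1$ forces $C$ to be ordinary at $p$, Katz's $p$-adic theory of CM modular forms applies, and the transported operator acts on the CM value through the Hecke character value $\psi_C^{k-1-r}\thin\ov{\psi_C}^{\,r-1}(\fr{p})$, while the Maass--Shimura raising $\partial_\tau^{r-1}$ contributes the factor $p^{r-1}$ via its $p$-adic incarnation as the Serre theta operator at an ordinary CM point.

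The hard part will be upgrading this eigenvalue relation to the supercongruence modulus $p^{(k-1)l}$, uniformly in $l$. The obstruction is that $G_{k,D}^{(r)}$ is not an exact eigenform: its pole at $\alpha_D$ together with the $V_p$-term in $U_p=T_p-p^{k-1}V_p$ produce correction terms whose $\fr{p}$-adic valuations must be pinned down. The valuation $r-1$ of the eigenvalue and the complementary valuation $k-1-r$ (attached to the conjugate summand $\psi_C^{r-1}\thin\ov{\psi_C}^{\,k-1-r}$) should together account for the exponent $(k-1)l$ under iteration of $U_p$, but making this quantitative requires more than the formal CM computation. I would attack it along one of two routes. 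Route (a): identify a half-integral weight Borcherds--Shimura preimage adapted to the CM point $\alpha_D$ and the pole order $r$, express $a_n(G_{k,D}^{(r)})$ through its coefficients, and establish the needed divisibilities $\fr{p}$-adically, in the spirit of \cite{li-neururer,pasol-zudilin}; this route is most likely to yield the refined $\fr{p}$-adic congruence directly. Route (b): transfer the recursion to the crystalline realisation of $\Sym^{k-2}C$ through the cohomological description of the quotient $S_k^{\mero,C}/\cl{D}^{k-1}M_{2-k}^{\mero,C}$ in \cite{brown-fonseca,kazalicki-scholl}, where $U_p$ is identified with the Frobenius and $p^{(k-1)l}$ becomes a divisibility statement for the matrix entries of $\Frob_p^l$ on a suitable integral lattice.

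I expect route (b) to be the more promising, as it is exactly the mechanism already known to produce Atkin--Swinnerton-Dyer congruences for weakly holomorphic forms; for overconvergent perspectives relevant to the companion supersingular case one may also compare \cite{calegari-pmf,gouvea-mazur}. The principal obstacle, in either route, is to make the correspondence $U_p\leftrightarrow\Frob_p$ rigorous in the \emph{meromorphic} setting, where the form genuinely has a pole at the CM point $\alpha_D$ rather than at a cusp, and to keep track of the integral (not merely rational) structure so that the sharp exponent $(k-1)l$ --- rather than a weaker power --- emerges uniformly in $l$.
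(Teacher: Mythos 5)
The statement you were asked to prove is labelled a \emph{conjecture} in the paper, and the paper offers no proof of it: it is presented as a numerical observation, a ``natural generalization of \Cref{ordinary-prime-conjecture} in the CM case'', and the only case the paper actually establishes is the middle one $r=k/2$ (\Cref{cm-middle-supercongruence}, via \Cref{magnetic-theorem}). So there is no proof in the paper to compare yours against, and your text is, by your own account, a strategy sketch rather than a proof. The part you do carry out --- reading off the Frobenius eigenvalue $u_p(C)^{k-2r}p^{r-1}$ from the summand $\psi_C^{k-1-r}\thin\ov{\psi_C}^{\,r-1}$ of the splitting \eqref{cm-elliptic-curve-splitting} --- exactly reproduces the paper's own heuristic (the identification of $G_{k,D}^{(r)}$ with $G_{k-1-r,r-1}$ in \Cref{cm-section}), but that heuristic only predicts the eigenvalue; it says nothing about the modulus $p^{(k-1)l}$, which is the entire content of the conjecture.

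Two concrete gaps in your two proposed routes. For route (a): the paper's Shimura-lift mechanism (\Cref{shimura-preimage}) produces, for weight $2s=k$, only the derivative order $\partial_\tau^{s-1}=\partial_\tau^{k/2-1}$, i.e.\ only the middle form $G_{k,D}^{(k/2)}$; the forms $G_{k,D}^{(r)}$ with $r\neq k/2$ are simply not in the image of that construction, so ``identify a half-integral weight preimage adapted to the pole order $r$'' is not a step you can take with the tools in \cite{li-neururer,pasol-zudilin,duke-jenkins-basis} --- it would require a genuinely new (e.g.\ vector-valued or higher-weight) lift. Moreover the eigenvalue $u_p(C)^{k-2r}p^{r-1}$ for $r\neq k/2$ is not a rational integer times a power of $p$, whereas the $U_p$-recursions obtainable from scalar Shimura lifts (as in \Cref{p^2-nmid-m}) have eigenvalues of the form $p^{s-1}(\tfrac{\cdot}{p})$; this mismatch is structural, not cosmetic. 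For route (b): extending the Kazalicki--Scholl crystalline identification of $U_p$ (or $V_p$) with Frobenius from the weakly holomorphic setting (pole at the cusp) to the meromorphic setting (pole at a CM point) is precisely the open problem the paper flags as ongoing work of Allen--Long--Saad, and nothing in your sketch supplies the missing integral comparison. In short: your proposal correctly locates where the difficulty lies, but neither route is executed, and the statement remains a conjecture.
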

\begin{remark}
\label{ordinary-prime-conjecture-cm-discriminant}
It is possible to rewrite this purely in terms of negative discriminants as in \Cref{cong-reln-cm-discriminant}. Since $(\frac{D}{p})=1$, we have that $p=\pi\ov{\pi}$ for some $\pi\in\cl{O}_D$. Then, for all $n,l\in\Z^+$,
\begin{align*}
    a_{np^l}(G_{k,D}^{(r)})\;\equiv\; \pi^{k-1-r}\thin\ov{\pi}^{r-1} a_{np^{l-1}}(G_{k,D}^{(r)})\pmod{\ov{\pi}^{(k-1)l}}.
\end{align*}
\end{remark}
\begin{remark}
As with the case of \Cref{supersingular-prime-conjecture}, it is also possible to deduce the ASD congruence (\Cref{asd-conjecture}) in the ordinary and CM case from \Cref{ordinary-prime-conjecture-cm} through involved computations which we omit here.
\end{remark}
\begin{remark}
One can also ``explain'' \Cref{supersingular-prime-conjecture} in this CM case through the same heuristic. If $p=\fr{p}$ is inert in $\cl{O}_D$, then $\Frob_\fr{p}(=\Frob_p^2)$ acts on $\psi_C^a\thin\ov{\psi_C^b}$ by $p^{k-2}$. Hence, for the potential basis $\{G_{a,b}\}_{a+b=k-2}$, one would expect that $U_p^2G_{a,b}\equiv p^{k-2}G_{a,b}$ modulo some power of $p$ for supersingular primes $p$, as observed in \Cref{supersingular-prime-conjecture}.
\end{remark}

Now we turn to the supersingular primes. In this case, we can already inherit certain property from \Cref{supersingular-prime-conjecture}. However, through the help of the $p$-adic Gamma function, one can moreover observe that for a supersingular prime $p$, the family $\{G_{k,D}^{(r)}\}$ behaves like an anti-diagonal basis with respect to the $U_p$ operator, i.e., $U_pG_{k,D}^{(r)}$ is a multiple of $G_{k,D}^{(k-r)}$ modulo some power of $p$. Indeed, such phenomenon was already hinted at in \cite[Corollary~1.2 and Example~3]{allen-long-saad}.

For a \textbf{fundamental} discriminant $D<0$ and a prime $p$ with $p\nmid D$, define
\begin{align}
    \Omega_{p,D}\deq -\bigg(\prod_{a=1}^{|D|-1}\Gamma_p\bigg(\frac{a}{|D|}\bigg)^{(\frac{D}{a})}\bigg)^{\frac{w_D}{2}}\;\in\;\Z_p^\times,
\end{align}
where $\Gamma_p$ denotes the $p$-adic Gamma function, $(\frac{D}{a})$ denotes the Kronecker symbol, and 
$w_D=|\cl{O}_D^\times|$. One should view this definition as a $p$-adic analogue of (the square of) the Chowla--Selberg period \cite{chowla-selberg} (see also \Cref{l-value-remark}).

\begin{conjecture}
\label{p-adic-gamma-supersingular-conjecture}
Let $k\in\{4,6,8,10,14\}$ and let $1\leq r\leq k-1$. Let $D<0$ be a discriminant with class number $1$ and write $D=A^2D_0$ for some $A\in\Z$ and some fundamental discriminant $D_0<0$. Suppose that $D\notin\{-3,-4,-12\}$. Let $p$ be a prime with $(\frac{D}{p})=-1$ and $v_p(j(\alpha_D))=0=v_p(j(\alpha_D)-1728)$. Then, for all $n,l\in\Z^+$,
\begin{align*}
    (r-1)!\cdot a_{np^l}(G_{k,D}^{(k-r)})\;\equiv\; (k-1-r)!\thin c_D^{\frac{k}{2}-r}\cdot(-p)^{r-1} \Omega_{p,D_0}^{-(\frac{k}{2}-r)}a_{np^{l-1}}(G_{k,D}^{(r)})\pmod{p^{(k-1)l}},
\end{align*}
where we write $j_D=j(\alpha_D)$ and
\begin{align*}
    c_D\eq \begin{cases}
        (-1)^D\cdot |j_D|^{2/3}\cdot (j_D-1728)^{-1}\cdot D\;(\in\Q) & \text{if }D\neq -16,-27, \\
        (-1)^D\cdot |j_D|^{2/3}\cdot \big(2(j_D-1728)\big)^{-1}\cdot D\eq-\frac{484}{3969} & \text{if }D=-16, \\
        (-1)^D\cdot |\frac{j_D}{3}|^{2/3}\cdot (j_D-1728)^{-1}\cdot D\eq -\frac{3600}{64009} & \text{if }D=-27.
    \end{cases}
\end{align*}
\end{conjecture}

\begin{remark}
\label{c_D-value}
The value of $-c_D$ is in fact always a rational square in all the cases we consider~here.
{\tiny
\begin{table}[H]
\centering
\begin{tabular}{|c|c|c|c|c|c|c|c|c|c|c|}
\hline
$D$ & $-7$ & $-8$ & $-11$ & $-16$ & $-19$ & $-27$ & $-28$ & $-43$ & $-67$ & $-163$ \\
\hline
$-c_D$ & $(5/9)^2$ & $(5/7)^2$ & $(4/7)^2$ & $(22/63)^2$ &  $(4/9)^2$ & $(60/253)^2$ & $(170/513)^2$ & $(40/189)^2$ & $(220/1953)^2$ & $(26680/1672209)^2$ \\
\hline
\end{tabular}
\caption{The value of $-c_D$.}
\end{table}
}
\end{remark}

It is also possible to describe the $p$-adic behaviors of $\{G_{k,D}^{(r)}\}$ for ordinary primes $p$ using the $p$-adic Gamma function, though we would need to exclude the case when $D=-12$ as before.

\begin{conjecture}
\label{p-adic-gamma-ordinary-conjecture}
Let $k\in\{4,6,8,10,14\}$ and let $1\leq r\leq k-1$. Let $D<0$ be a discriminant with class number $1$ and write $D=A^2D_0$ for some $A\in\Z$ and some fundamental discriminant $D_0<0$. Suppose that $D\notin\{-3,-4,-12\}$. Let $p$ be a prime with $(\frac{D}{p})=1$ and $v_p(j(\alpha_D))=0=v_p(j(\alpha_D)-1728)$. Then, for all $n,l\in\Z^+$,
\begin{align*}
    a_{np^l}(G_{k,D}^{(r)})\;\equiv\; p^{r-1}\Omega_{p,D_0}^{\frac{k}{2}-r}a_{np^{l-1}}(G_{k,D}^{(r)})\pmod{p^{(k-1)l}}.
\end{align*}
\end{conjecture}

\begin{remark}
\label{normalizing-factor-remark}
If one renormalizes $G_{k,D}^{(r)}$ by setting
\begin{align*}
    \wh{G}_{k,D}^{(r)}\deq (k-1-r)!c_D^{-\frac{r-1}{2}}\cdot G_{k,D}^{(r)}
\end{align*}
and renumbers them by setting $H_{k,D}^{(i)}:=\wh{G}_{k,D}^{(\frac{k}{2}+i)}$ for $|i|\leq\frac{k}{2}-1$, then one can write \Cref{p-adic-gamma-supersingular-conjecture} and \Cref{p-adic-gamma-ordinary-conjecture} in the following uniform way: for all $n,l\in\Z^+$,
\begin{align*}
    a_{np^l}(H_{k,D}^{(\epsilon_p i)})\;\equiv\;\big(\epsilon_p p)^{\frac{k}{2}+i-1}\Omega_{p,D_0}^{-\epsilon_p i}\thin a_{np^{l-1}}(H_{k,D}^{(i)})\pmod{p^{(k-1)l}},
\end{align*}
where $\epsilon_p=(\frac{D}{p})$. While the issue here is that $c_D$ is not a square, a naive workaround is to simply view the congruence as describing that the difference of both sides has $p$-adic valuation $\geq (k-1)l$.
\end{remark}

\begin{remark}
Let $D<0$ be a \textbf{fundamental} discriminant with class number $1$. By a result of Gross--Koblitz \cite[Theorem~4.13]{gross-koblitz}, one can give an explicit characterization of $\Omega_{p,D}$ when $(\frac{D}{p})=1$. Suppose that $p$ splits as $p=\pi\ov{\pi}$ in $\cl{O}_D$ and let $\iota_\pi:\cl{O}_D\rightarrow\Z_p$ be the $p$-adic embedding induced by $\pi$. Then,
\begin{align*}
    \Omega_{p,D}\eq\iota_\pi(\ov{\pi}^2).
\end{align*}
Equivalently, if one chooses an elliptic curve $C/\Q$ with CM by $\cl{O}_D$ in a way that all of its bad primes divide $D$, then
\begin{align*}
    \Omega_{p,D}\eq u_p(C)^2,
\end{align*}
where $u_p(C)$ is the root of $X^2-a_p(C)X+p$ which is a $p$-adic unit. In this way, \Cref{p-adic-gamma-ordinary-conjecture} is equivalent to \Cref{ordinary-prime-conjecture-cm} when $D$ is fundamental.

When $D$ is non-fundamental, i.e., $D=-16,-27,-28$ in our case, one observes that the elliptic curve $C_D/\Q$ with CM by $\cl{O}_D$ such that all of its bad primes divide $D$ is isogenous to the one $C_{D_0}/\Q$ with CM by $\cl{O}_{D_0}$ such that all of its bad primes divide $D_0$, where $D=A^2D_0$ for some $A\in\Z$ and some fundamental discriminant $D_0<0$. In this way, it follows that
\begin{align*}
    \Omega_{p,D_0}\eq u_p(C_{D_0})^2\eq u_p(C_D)^2.
\end{align*}
Hence, \Cref{p-adic-gamma-ordinary-conjecture} is also equivalent to \Cref{ordinary-prime-conjecture-cm} in the non-fundamental case.
\end{remark}

\begin{remark}
\label{l-value-remark}
One can also relate $\Omega_{p,D}$ with the special value of a $p$-adic $L$-function by a result of Ferrero--Greenberg \cite[Theorem~1]{ferrero-greenberg}. Let $D<0$ be a \textbf{fundamental} discriminant and fix a prime $p$ with $p\nmid D$. Let $\chi=(\frac{D}{\cdot})$ be the (primitive) Dirichlet character associated to $D$ and let $\eps_p$ denote the $p$-adic cyclotomic character. Then, 
\begin{align}
\label{p-adic-lerch}
    L_p'(0,\chi\eps_p)\eq \sum_{a=1}^{|D|-1}\chi(a)\log_p\Gamma_p\bigg(\frac{a}{|D|}\bigg)-L_p(0,\chi\eps_p)\cdot \log_p |D|,
\end{align}
and hence
\begin{align*}
    L_p'(0,\chi\eps_p)\eq \frac{2}{w_D}\log_p\Omega_{p,D}-L_p(0,\chi\eps_p)\cdot \log_p |D|,
\end{align*}
where $L_p$ denotes the $p$-adic $L$-function and $\log_p$ denotes the $p$-adic logarithm. Here $L_p(0,\chi\eps_p)$ can be expressed explicitly as the following
\begin{align*}
    L_p(0,\chi\eps_p)\eq -\big(1-\chi(p)\big)\cdot\frac{1}{|D|}\sum_{a=1}^{|D|-1}\chi(a)a.
\end{align*}
The formula (\ref{p-adic-lerch}) should be compared with the version of the complex $L$-functions. In this case, Lerch's formula \cite[p.~303,~(25)]{lerch} says that
\begin{align*}
    L'(0,\chi)&\eq \sum_{a=1}^{|D|-1}\chi(a)\log\Gamma\bigg(\frac{a}{|D|}\bigg)-L(0,\chi)\cdot \log |D|.
\end{align*}
\end{remark}

\subsection{The magnetic property and other observations}
\label{magnetic-other-observation-section}

In fact, one can already observe interesting numerical phenomena for the family $\{G_{k,D}^{(r)}\}$ by viewing $\Sym^{k-2}C$ simply as a Galois representation of $\Q$. Indeed, we have the following splitting (as Galois representations of $\Q$)\footnote{A possibly easier way of observing this splitting is through the splitting of the $L$-function of $\Sym^{k-2}C$.}
\begin{align}
    \Sym^{k-2}C\;\cong\;\big((\tfrac{D}{\cdot})\thin\eps_\ell\big)^{\frac{k-2}{2}}\oplus\thin\bigoplus_{r=1}^{\frac{k-2}{2}}\big(\eps_\ell^{r-1}\otimes\Ind_{K/\Q}\psi_C^{k-2r}\big),
\end{align}
where $(\frac{D}{\cdot})$ denotes (the Dirichlet character given by) the Kronecker symbol and $\eps_\ell$ denotes the $\ell$-adic cyclotomic character. Here the term $\big((\tfrac{D}{\cdot})\thin\eps_\ell\big)^{\frac{k-2}{2}}$ should correspond to $G_{k,D}^{(k/2)}$ and the term $\eps_\ell^{r-1}\otimes\Ind_{K/\Q}\psi_C^{k-2r}$ should correspond to $\Span\{G_{k,D}^{(r)},G_{k,D}^{(k-r)}\}$. We also observe that the appearance of powers of $\eps_\ell$ seems to account for the magnetic property of $G_{k,D}^{(r)}$. Specifically, we have the following.

\begin{conjecture}
\label{cm-magnetic-conjecture}
Let $k\in\{4,6,8,10,14\}$ and let $1\leq r\leq k-1$. Let $D<-4$ be a discriminant with class number $1$. Let $A=A_{k,D}^{(r)}$ be a nonzero integer such that $\wt{G}_{k,D}^{(r)}=AG_{k,D}^{(r)}$ has integer coefficients with no common divisors. Then, $\wt{G}_{k,D}^{(r)}$ is $(r'-1)$-magnetic, i.e., for all $n\in\Z^+$,
\begin{align*}
    n^{r'-1}\thin\big|\thin a_n(\wt{G}_{k,D}^{(r)}),
\end{align*}
where $r'=\min(r,k-r)$.
\end{conjecture}
\begin{remark}
If one only focuses on $a_p=a_p(G_{k,D}^{(r)})$, then generically one obtains the following table from \Cref{supersingular-prime-conjecture} (or \Cref{p-adic-gamma-supersingular-conjecture}) and \Cref{ordinary-prime-conjecture-cm}.
{\small
\begin{table}[H]
\renewcommand*{\arraystretch}{1.22}
\centering
\begin{tabular}{ |c|c|c|c|c|c|c| }
\hline
$r$ & $1$ & $2$ & $3$ & $\cdots$ & $k-2$ & $k-1$ \\
\hline
$v_p(a_p)$ for $(\frac{D}{p})=-1$ & $k-2$ & $k-3$ & $k-4$ & $\cdots$ & $1$ & $0$ \\ 
\hline
$v_p(a_p)$ for $(\frac{D}{p})=1$ & $0$ & $1$ & $2$ & $\cdots$ & $k-3$ & $k-2$ \\
\hline
\end{tabular}
\end{table}
}
\noindent To some extent, this provides a visual explanation of the $(r'-1)$-magnetic property of $G_{k,D}^{(r)}$.
\end{remark}

\begin{theorem}
\label{cm-middle-supercongruence}
Let $k\in\{4,6,8,10,14\}$ and let $D<-4$ be a discriminant with class number~$1$. Let $p$ be a prime with $p\nmid D$ and $v_p(j(\alpha_D))=0=v_p(j(\alpha_D)-1728)$. Then, $G_{k,D}^{(k/2)}$ satisfies that for all $n,l\in\Z^+$,
\begin{align*}
    a_{np^l}(G_{k,D}^{(k/2)})\;\equiv\;\big(\big(\tfrac{D}{p}\big)\thin p\big)^{\frac{k-2}{2}}a_{np^{l-1}}(G_{k,D}^{(k/2)})\pmod{p^{(k-1)l}}.
\end{align*}
\end{theorem}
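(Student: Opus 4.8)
\section*{Proof proposal}

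The plan is to prove this as the general-weight, general-discriminant analogue of \Cref{supercongruence-k=4-j=0-1728} (which is precisely the case $k=4$, $D\in\{-3,-4\}$) and, as indicated there, to run the argument through the Borcherds--Shimura lift. The conceptual reason to expect a clean two-term congruence — with no split/inert dichotomy and with the single uniform eigenvalue $\big((\tfrac{D}{p})p\big)^{(k-2)/2}$ — is that $G_{k,D}^{(k/2)}$ realizes the one-dimensional middle summand $(\chi_K\eps_\ell)^{(k-2)/2}$ of $\Sym^{k-2}C$. This one-dimensionality is visible analytically: since $E_{14-k}/\Delta$ has weight $2-k$ in $\tau$ and $\partial_\tau$ raises the weight by $2$, the form $\partial_\tau^{k/2-1}G_k(z,\tau)$ has weight $0$ in $\tau$, so after specialising to $\tau=\alpha_D$ it becomes the CM value of a weight-$0$ (nearly holomorphic) object, and weight-$0$ CM values are algebraic with no transcendental period. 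This is the structural feature that singles out $r=k/2$.

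First I would identify the half-integral weight preimage. Following Li--Neururer and Pa\c{s}ol--Zudilin, I would produce a weakly holomorphic modular form $g=g_{k,D}$ of half-integral weight whose regularised theta (Borcherds--Shimura) lift equals $G_{k,D}^{(k/2)}$ up to a nonzero rational scalar, the order-$k/2$ pole at $\alpha_D$ being dictated by the principal part of $g$. The lift then furnishes an explicit formula expressing $a_n(G_{k,D}^{(k/2)})$ as a finite sum of the Fourier coefficients of $g$, twisted by CM/Gauss-sum data (twisted traces of singular moduli of discriminant $D$). Because $D$ has class number $1$ and $r=k/2$ is the balanced derivative, the part of $g$ that governs these coefficients is itself a half-integral weight CM (theta/Eisenstein) \emph{eigenform}, so its coefficients obey the $T_{p^2}$-recursion of an eigenform.

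The heart of the argument is to turn this $T_{p^2}$-recursion on the coefficients of $g$ into the asserted two-term recursion on $a_n(G_{k,D}^{(k/2)})$. Via the Shimura correspondence the $T_{p^2}$-eigenvalue of $g$ matches the $U_p$-eigenvalue on the integral-weight side, and a direct computation of that eigenvalue — using that the middle piece is $(\chi_K\eps_\ell)^{(k-2)/2}$, equivalently using the weight-$0$ Hecke action at $\alpha_D$ together with the factor $p^{(k-2)/2}$ coming from the normalisation of $\partial_\tau^{k/2-1}$ across weights $2-k$ and $0$ — should yield precisely $\big((\tfrac{D}{p})p\big)^{(k-2)/2}$. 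Iterating this recursion and inducting on $l$ then gives the congruence for every $l$.

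The main obstacle is the \emph{exact} modulus $p^{(k-1)l}$: proving the supercongruence, as opposed to the mere magnetic divisibility of Li--Neururer and Pa\c{s}ol--Zudilin, requires sharp $p$-adic valuation estimates for the twisted CM values (Gauss sums and singular moduli) appearing in the coefficient formula, together with a careful propagation of these valuations through the $T_{p^2}$-recursion so that each application of $U_p$ contributes the full $p^{k-1}$. I expect this valuation bookkeeping — carried out uniformly in $l$ and uniformly for split ($(\tfrac{D}{p})=1$) and inert ($(\tfrac{D}{p})=-1$) primes, the latter requiring the overconvergence of $G_{k,D}^{(k/2)}$ in the supersingular locus — to be the delicate part; the eigenvalue computation and the identification of the preimage, by contrast, are guided closely by the known $k=4$, $D\in\{-3,-4\}$ instance.
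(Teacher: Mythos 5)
Your starting point matches the paper's: the proof does go through the Shimura lift, identifying $G_{k,D}^{(k/2)}$ (up to an explicit constant) with $\cl{S}_{d_0}f_{s+1/2,m}$ for $s=k/2$ and a suitable $m$ (namely $m=|D|$ with $d_0=1$ when $s$ is even, and $m=A^2$ with $d_0=D_0$ when $s$ is odd, via \Cref{shimura-preimage}), and then transporting the Hecke action on the half-integral weight side to the integral weight side. But there is a genuine gap in the core mechanism you propose. The preimage $f_{s+1/2,m}=q^{-m}+O(q)$ is a \emph{weakly holomorphic} element of the Kohnen plus space, not a CM theta or Eisenstein eigenform, and its coefficients do not satisfy an eigenform recursion. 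What actually drives the argument is the basis structure of $M_{s+1/2}^{!,+}$: since a plus-space form is determined by its principal part and the $f_{s+1/2,m}$ form an integral basis (\Cref{principal-cong-whole-cong}), one computes that $f_{s+1/2,m}|T_{p,s+1/2}-p^{s-1}\big(\tfrac{(-1)^{s-1}m}{p}\big)f_{s+1/2,m}=p^{2s-1}f_{s+1/2,mp^2}$ exactly, and iterating gives the identity $H_1|U_p^l=p^{(2s-1)l}H_{l+1}$ with $H_{l+1}$ built from \emph{integral} forms $f_{s+1/2,mp^{2i}}$ (\Cref{half-int-integrality-lemma}, \Cref{p^2-nmid-m}). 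The sharp modulus $p^{(k-1)l}$ falls out of this identity for free; it is not obtained from, and does not require, any $p$-adic valuation estimates for twisted CM values, Gauss sums, or singular moduli, nor any overconvergence argument at inert primes. Your plan, as stated, lacks the step that would actually produce the two-term recursion with the full power of $p$, and the step you flag as the delicate one is not the one that occurs in the proof.

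A secondary point: the uniform eigenvalue $\big((\tfrac{D}{p})p\big)^{(k-2)/2}$ is not computed from the Galois-theoretic middle piece $(\chi_K\eps_\ell)^{(k-2)/2}$ (that is only the heuristic motivating the statement); it arises concretely as the coefficient $p^{s-1}\big(\tfrac{(-1)^{s-1}m}{p}\big)$ of the $q^{-m}$-term in $f_{s+1/2,m}|T_{p,s+1/2}$, with $\big(\tfrac{(-1)^{s-1}m}{p}\big)=\big(\tfrac{D}{p}\big)$ for the relevant $m$. Also note the paper handles general (non-fundamental, non-class-number-one) $D$ by averaging over $\cl{Q}_D^{\prim}$ and M\"obius inversion over the decomposition $\cl{Q}_D=\bigsqcup_{A'\divides A}\frac{A}{A'}\cl{Q}_{A'^2D_0}^{\prim}$, which for class number one and $D<-4$ specialises to the stated theorem.
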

\begin{remark}
We include this result here for the completeness of the discussion. It simply follows from \Cref{cm-middle-supercongruence-intro}, since $G_{k,D}^{(k/2)}$ is a nonzero rational multiple of $\wt{\cl{G}}_{k,D}$, and the rational multiple involves only divisors of $D$, $j(\alpha_D)$, and $j(\alpha_D)-1728$.
\end{remark}

\begin{conjecture}
\label{asd-cm-conjecture-higher}
Let $k\in\{4,6,8,10,14\}$, $1\leq r\leq k-1$ with $r\neq\frac{k}{2}$, and $r'=\min(r,k-r)$. Let $D<-4$ be a discriminant with class number~$1$. Consider the CM modular form
\begin{align*}
    \Theta_{r'}\deq \Theta_{k+1-2r',D}\deq\frac{1}{2}\sum_{\alpha\in\clo_D}\alpha^{k-2r'}q^{|\N(\alpha)|}
\end{align*}
of weight $k+1-2r'$. Let $p$ be a prime with $p\nmid D$ and $v_p(j(\alpha_D))=0=v_p(j(\alpha_D)-1728)$. Then, for all $n,l\in\Z^+$,
\begin{align*}
    a_{np^l}(G_{k,D}^{(r)})-p^{r'-1}a_p(\Theta_{r'})a_{np^{l-1}}(G_{k,D}^{(r)})+\big(\tfrac{D}{p}\big)p^{k-2}a_{np^{l-2}}(G_{k,D}^{(r)})\;\equiv\;0\pmod{p^{(k-1)l-r}}.
\end{align*}
\end{conjecture}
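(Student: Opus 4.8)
The plan is to deduce \Cref{asd-cm-conjecture-higher} from the supersingular and ordinary dichotomy already recorded in \Cref{supersingular-prime-conjecture-cm} and \Cref{ordinary-prime-conjecture-cm}, in exactly the way \Cref{asd-cm-conjecture} was obtained from \Cref{supersingular-prime-conjecture} and \Cref{ordinary-prime-conjecture} in \Cref{explicit-asd-cm-conjecture}; the present statement is the general-$r$ analogue of that $r=1$ computation. By Deuring's criterion I split into the inert case $\big(\frac{D}{p}\big)=-1$ and the split case $\big(\frac{D}{p}\big)=1$, using the explicit shape of $a_p(\Theta_{r'})$: writing $p=\pi\ov{\pi}$ with $\pi\in\clo_D$ when $p$ splits, one has $a_p(\Theta_{r'})=\pi^{k-2r'}+\ov{\pi}^{k-2r'}$, while $a_p(\Theta_{r'})=0$ when $p$ is inert. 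Verifying these trace identities for the theta series $\Theta_{k+1-2r',D}$ is a standard computation.

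First I would dispose of the inert case. Here $a_p(\Theta_{r'})=0$ and $\big(\frac{D}{p}\big)=-1$, so the three-term expression collapses to $a_{np^l}(G_{k,D}^{(r)})-p^{k-2}a_{np^{l-2}}(G_{k,D}^{(r)})$, and the required congruence modulo $p^{(k-1)l-r}$ is precisely the content of \Cref{supersingular-prime-conjecture-cm}. Nothing further is needed.

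The split case is where the factorization and telescoping happen. Normalize $\pi$ so that $u_p(C)=\pi$ is the $p$-adic unit root of $X^2-a_p(C)X+p$, whence $v_p(\pi)=0$, $v_p(\ov{\pi})=1$, and $\ov{\pi}=p/\pi$. The quadratic $X^2-p^{r'-1}a_p(\Theta_{r'})X+p^{k-2}$ has roots $p^{r'-1}\pi^{k-2r'}$ and $p^{r'-1}\ov{\pi}^{k-2r'}$ (their product is $p^{k-2}$ and their sum is $p^{r'-1}a_p(\Theta_{r'})$), and substituting $\ov{\pi}=p/\pi$ and $r'=\min(r,k-r)$ shows, in each of the regimes $r<k/2$ (where $r'=r$) and $r>k/2$ (where $r'=k-r$), that it factors over $\zp$ as $(X-\lambda_1)(X-\lambda_2)$ with
\begin{align*}
\lambda_1=u_p(C)^{k-2r}p^{r-1},\qquad v_p(\lambda_1)=r-1,\qquad v_p(\lambda_2)=k-r-1.
\end{align*}
The root $\lambda_1$ factored out first is exactly the eigenvalue governing the two-term congruence of \Cref{ordinary-prime-conjecture-cm}. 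Writing $F=G_{k,D}^{(r)}$, I then telescope the three-term expression as
\begin{align*}
\big(a_{np^l}(F)-\lambda_1 a_{np^{l-1}}(F)\big)-\lambda_2\big(a_{np^{l-1}}(F)-\lambda_1 a_{np^{l-2}}(F)\big).
\end{align*}
By \Cref{ordinary-prime-conjecture-cm} the first bracket is $\equiv 0\pmod{p^{(k-1)l}}$, and the second bracket is $\equiv 0\pmod{p^{(k-1)(l-1)}}$ (for $l=1$ this is the trivial congruence modulo $p^0$, valid because $F$ has integer coefficients). Hence the second term has $p$-valuation at least $v_p(\lambda_2)+(k-1)(l-1)=(k-r-1)+(k-1)(l-1)=(k-1)l-r$, while the first term vanishes to an even higher power; the whole expression is therefore $\equiv 0\pmod{p^{(k-1)l-r}}$, as claimed.

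The bookkeeping above is elementary once the factorization is in place, so the genuine obstacle lies not in this deduction but in its inputs: \Cref{supersingular-prime-conjecture-cm} and \Cref{ordinary-prime-conjecture-cm} are themselves the hard, currently unproven (numerically observed) statements, and any unconditional proof of \Cref{asd-cm-conjecture-higher} must first establish those two dichotomies. Within the conditional argument the only point requiring real care is the uniform identification of the factored-out root $\lambda_1$ with the ordinary eigenvalue $u_p(C)^{k-2r}p^{r-1}$ across both regimes $r<k/2$ and $r>k/2$, which rests on the CM relations $u_p(C)=\pi$, $\ov{\pi}=p/\pi$, together with the trace formula for $a_p(\Theta_{r'})$.
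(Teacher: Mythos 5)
Your proposal is correct and follows exactly the route the paper itself indicates: the remark after \Cref{asd-cm-conjecture-higher} states that the congruence follows from the supersingular (\Cref{supersingular-prime-conjecture-cm}) and ordinary (\Cref{ordinary-prime-conjecture-cm}) dichotomy as in \Cref{explicit-asd-cm-conjecture}, and your inert-case collapse plus split-case factorization of $X^2-p^{r'-1}a_p(\Theta_{r'})X+p^{k-2}$ into $(X-\lambda_1)(X-\lambda_2)$ with $\lambda_1=u_p(C)^{k-2r}p^{r-1}$, $v_p(\lambda_2)=k-r-1$, and the telescoping valuation count $(k-r-1)+(k-1)(l-1)=(k-1)l-r$ is precisely the computation the paper leaves implicit. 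You correctly flag that the argument is conditional on the two dichotomy conjectures, matching the paper's framing; as a minor aside, your telescoping with the coefficient $-\lambda_2$ carries the correct sign, whereas the analogous display in \Cref{explicit-asd-cm-conjecture} has a sign typo ($+$ where $-$ is needed).
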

\begin{remark}
This should be viewed as a generalization of the three-term ASD congruence observed in the case of CM elliptic curves and simple poles, i.e., \Cref{asd-cm-conjecture} or \Cref{a-sd-cm-discriminant}. As in \Cref{explicit-asd-cm-conjecture}, this conjecture also follows from the supersingular (\Cref{supersingular-prime-conjecture}) and ordinary (\Cref{ordinary-prime-conjecture-cm}) dichotomy.
\end{remark}

The last two observations are related to tensor products. In the splitting (\ref{cm-elliptic-curve-splitting}) of the elliptic curve side over $K$, one has that $\psi_C^a\ov{\psi_C^b}\otimes\psi_C^b\ov{\psi_C^a}=(\psi_C\ov{\psi_C})^{k-2}=(\Res_{K/\Q}\eps_\ell)^{k-2}$. On the modular form side, this suggests that the ``tensor product'' of $G_{k,D}^{(r)}$ and $G_{k,D}^{(k-r)}$ should resemble the character $\eps_\ell^{k-2}$. Here we form the ``tensor product'' of two modular forms via termwise multiplication, motivated by the Rankin--Selberg theory.

\begin{conjecture}
\label{r-k-r-congruence}
Let $k\in\{4,6,8,10,14\}$, $r\in\Z$ with $1\leq r\leq k-1$, and $r'=\min(r,k-r)$. Let $D<-4$ be a discriminant with class number~$1$. Let $p$ be a prime with $p\nmid D$ and $v_p(j(\alpha_D))=0=v_p(j(\alpha_D)-1728)$. Then, for all $n,l\in\Z^+$,
\begin{align}
\label{r-k-r-congruence-equation}
    a_{np^l}(G_{k,D}^{(r)})a_{np^l}(G_{k,D}^{(k-r)})\;\equiv\;p^{k-2}a_{np^{l-1}}(G_{k,D}^{(r)})a_{np^{l-1}}(G_{k,D}^{(k-r)})\pmod{p^{(k+r'-2)l}}.
\end{align}
\end{conjecture}

\begin{remark}
When generalizing this conjecture to other weights in \Cref{other-weight-section}, it is possible that in the case when $(\frac{D}{p})=-1$, one may need to multiply both sides by $(r-1)!\cdot(k-1-r)!$ to account for the factors that appear in \Cref{p-adic-gamma-supersingular-conjecture}. Also, when $(\frac{D}{p})=-1$, the numerical observations suggest that the congruence (\ref{r-k-r-congruence-equation}) holds modulo $p^{(k+r'-2)l+(k-2r')}$ for $l\geq 2$.
\end{remark}

\begin{conjecture}
Let $k\in\{4,6,8,10,14\}$ and $1\leq r\leq k-1$. Let $D<-4$ be a discriminant with class number~$1$. Then, there exists a nonzero integer $A$ such that for all $n\in\Z^+$,
\begin{align*}
    n^{k-2}\thin\big|\thin A\thin a_n(G_{k,D}^{(r)})a_n(G_{k,D}^{(k-r)}).
\end{align*}
\end{conjecture}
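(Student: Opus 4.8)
\textit{Proof proposal.} The plan is to reduce the claim to a statement about $p$-adic valuations: since the product lies in $\Z$, it suffices to prove that for every prime $p$,
\[
    v_p\big(a_n(G_{k,D}^{(r)})\thin a_n(G_{k,D}^{(k-r)})\big)\;\geq\;(k-2)\thin v_p(n).
\]
Fix such a $p$, write $n=mp^l$ with $p\nmid m$ and $l=v_p(n)$, and set $B_j:=a_{mp^j}(G_{k,D}^{(r)})\thin a_{mp^j}(G_{k,D}^{(k-r)})$, so the goal becomes $v_p(B_l)\geq (k-2)l$. For a \emph{good} prime, meaning $p\nmid D$ and $v_p(j(\alpha_D))=0=v_p(j(\alpha_D)-1728)$, I would invoke \Cref{r-k-r-congruence} with the prime-to-$p$ integer $m$ in place of $n$, giving $B_j\equiv p^{k-2}B_{j-1}\pmod{p^{(k+r'-2)j}}$ for every $j\geq 1$. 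Since $r'\geq 1$ the modulus exponent satisfies $(k+r'-2)j\geq (k-2)j$, so a one-line induction on $j$ finishes the good-prime case: the base $v_p(B_0)\geq 0$ is trivial, and if $v_p(B_{j-1})\geq (k-2)(j-1)$ then $v_p(p^{k-2}B_{j-1})\geq (k-2)j$ while the error term has valuation $\geq (k+r'-2)j\geq (k-2)j$, whence $v_p(B_j)\geq (k-2)j$.

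It remains to treat the finitely many \emph{bad} primes, namely $p\mid D$, $p\mid j(\alpha_D)$, or $p\mid(j(\alpha_D)-1728)$, where \Cref{r-k-r-congruence} is unavailable; this is the main obstacle. The crudest available bound comes from the magnetic property (\Cref{cm-magnetic-conjecture}): as $G_{k,D}^{(r)}$ and $G_{k,D}^{(k-r)}$ share the same $r'=\min(r,k-r)$, both are $(r'-1)$-magnetic, giving $v_p(B_l)\geq (2r'-2)l$. This already settles the case $r=k/2$, where $2r'-2=k-2$ and the two forms coincide, so the claim reduces to the $\tfrac{k-2}{2}$-magnetic property of $G_{k,D}^{(k/2)}$ established alongside \Cref{cm-middle-supercongruence}. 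For $r\neq k/2$, however, it leaves a deficit of $|k-2r|\thin v_p(n)$ to be accounted for at each bad prime.

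To close this deficit I would aim to transport the ``conjugate valuation'' phenomenon from good to bad primes. At a good prime the valuations of $a_p(G_{k,D}^{(r)})$ and $a_p(G_{k,D}^{(k-r)})$ are complementary — one is $r-1$ and the other $k-r-1$ in the split case, and $k-1-r$ and $r-1$ in the inert case — and always sum to $k-2$; this mirrors exactly the factorization $\psi_C^a\thin\ov{\psi_C^b}\cdot\psi_C^b\thin\ov{\psi_C^a}=(\psi_C\thin\ov{\psi_C})^{k-2}=(\Res_{K/\Q}\eps_\ell)^{k-2}$ under which $G_{k,D}^{(r)}$ and $G_{k,D}^{(k-r)}$ pair up. Concretely, I would analyse the $p$-adic behaviour of each $G_{k,D}^{(r)}$ at the bad primes by the same Borcherds--Shimura-lift and CM-period techniques used to prove \Cref{cm-middle-supercongruence}, seeking lower bounds $v_p(a_n(G_{k,D}^{(r)}))\geq \delta_r\thin v_p(n)$ and $v_p(a_n(G_{k,D}^{(k-r)}))\geq \delta_{k-r}\thin v_p(n)$ with $\delta_r+\delta_{k-r}=k-2$. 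Obtaining such complementary bounds at $p\mid D$ and the remaining bad primes is precisely the delicate step, since it concerns exactly the reduction of the CM pole that all the good-prime congruences were designed to avoid.
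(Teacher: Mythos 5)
The statement you are addressing is itself a \emph{conjecture} in the paper: no proof is given, only a remark that records precisely the two reductions you make, namely that \Cref{cm-magnetic-conjecture} yields a weaker divisibility and that the divisibility at primes where \Cref{r-k-r-congruence} applies should follow from \Cref{r-k-r-congruence}. Within that scope your write-up is sound: the reduction to $p$-adic valuations, the induction $v_p(B_j)\geq(k-2)j$ using $B_j\equiv p^{k-2}B_{j-1}\pmod{p^{(k+r'-2)j}}$, and the observation that the case $r=k/2$ collapses to the $\tfrac{k-2}{2}$-magnetic property of $G_{k,D}^{(k/2)}$ (which is the one unconditional ingredient, supplied by \Cref{magnetic-theorem} up to the integer rescaling of $G_{k,D}^{(k/2)}$) all check out, and your bookkeeping of which primes are ``bad'' is in fact more careful than the paper's remark, which only mentions the condition $(n,D)=1$.

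Nevertheless this is not a proof, for two reasons. First, the entire good-prime case is conditional on \Cref{r-k-r-congruence}, which is itself only a numerical observation; the paper shows it follows from \Cref{ordinary-prime-conjecture-cm} when $\big(\tfrac{D}{p}\big)=1$ and from \Cref{cm-middle-supercongruence} when $r=k/2$, but in the inert case with $r\neq k/2$ it does not even reduce to the other stated conjectures, and none of these are proved. Second, the bad primes are unavoidable: for every class-number-one discriminant $D<-4$ there are primes dividing $j(\alpha_D)$ or $j(\alpha_D)-1728$ that do not divide $D$ (e.g.\ $j(\alpha_{-7})=-3^3\cdot 5^3$), so the deficit of $|k-2r|\,v_p(n)$ you identify is a real obstruction, not an edge case. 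Your proposed remedy --- complementary valuation bounds $\delta_r+\delta_{k-r}=k-2$ mirroring $\psi_C^a\thin\ov{\psi_C^b}\cdot\psi_C^b\thin\ov{\psi_C^a}=(\psi_C\thin\ov{\psi_C})^{k-2}$ --- is a plausible heuristic but comes with no mechanism: the Shimura-lift argument of \Cref{magnetic-proof-section} only controls the symmetrized trace $\cl{G}_{k,D}$, i.e.\ the single form with $r=k/2$, and gives no handle on the individual $G_{k,D}^{(r)}$ for $r\neq k/2$, precisely because those depend on the choice of $\alpha_D$ and are not values of a weight-$0$ object in $\tau$. So after your proposal the statement stands exactly where the paper leaves it: proved for $r=k/2$, conditionally reduced to \Cref{r-k-r-congruence} away from the bad primes, and open at the bad primes.
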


\subsection{Examples}
\label{cm-example-section}

We will now discuss several examples in weight $4$ and $6$. We will mostly focus on \Cref{ordinary-prime-conjecture-cm} and \Cref{p-adic-gamma-supersingular-conjecture}, since other phenomena are more or less consequences of these two conjectures. Throughout this subsection, we define
\begin{align*}
    \psi\deq\frac{E_2^*E_4}{E_6}\quad\text{ with }\quad E_2^*(\tau)\deq E_2(\tau)-\frac{3}{\pi\Im(\tau)}.
\end{align*}
Here $E_2^*$ is called the non-holomorphic Eisenstein series of weight $2$, which satisfies the modular transformation of weight $2$. As a consequence, $\psi$ is invariant under the modular transformation, and hence is a (non-holomorphic) modular function. For a discriminant $D<0$ of class number~$1$, we write
\begin{align*}
    j_D\eq j(\alpha_D)\quad \text{ and }\quad \psi_D\eq \psi(\alpha_D).
\end{align*}
It follows from a result by Masser \cite[Theorem~A1]{masser} that $\psi_D\in\Q$.\footnote{In general, Masser showed that $\psi(\alpha)\in\Q(j(\alpha))$ for any CM point $\alpha$ on the upper half plane.} Here we tabulate the exact values of $j_D$ and $\psi_D$ and we refer to \cite[\S 12]{cox} and \cite[p. 121]{masser} (see also \cite{milla-s2}) for the proofs of these exact values.

{\tiny
\begin{table}[H]
\centering
\begin{tabular}{|c|c|c|c|c|c|c|c|c|c|c|c|}
\hline
$D$ & $-7$ & $-8$ & $-11$ & $-12$ & $-16$ & $-19$ & $-27$ & $-28$ & $-43$ & $-67$ & $-163$ \\
\hline
$j_D$ & $-15^3$ & $20^3$ & $-32^3$ & $2\cdot 30^3$ & $66^3$ & $-96^3$ & $-3\cdot 160^3$ & $255^3$ & $-960^3$ & $-5280^3$ & $-640320^3$ \\
\hline
$\psi_D$ & $5/21$ & $5/14$ & $32/77$ & $5/11$ &  $11/21$ & $32/57$ & $160/253$ & $85/133$ & $640/903$ & $33440/43617$ & $77265280/90856689$ \\
\hline
\end{tabular}
\caption{The values of $j_D$ and $\psi_D$.}
\end{table}
}

\begin{example}
\label{k=4-cm-example}
Let $k=4$. By computations, we have
{\small
\begin{align*}
    G_{4,D}^{(1)}&\eq\frac{E_4}{j-j_D} \\
    G_{4,D}^{(2)}&\eq\frac{E_4}{j-j_D}\cdot\left(\tfrac{1}{2}\cdot\tfrac{j_D}{j_D-1728}+\big(\tfrac{1}{3}+\tfrac{\psi_D}{6}\big)\right)+\frac{E_4}{(j-j_D)^2}\cdot j_D \\
    G_{4,D}^{(3)}&\eq\tfrac{E_4}{j-j_D}\cdot{\scriptstyle\left(\big(\tfrac{61}{72}+\tfrac{\psi_D}{12}\big)\cdot\tfrac{j_D}{j_D-1728}+\big(\tfrac{\psi_D}{18}+\tfrac{\psi_D^2}{72}\big)\right)}+\tfrac{E_4}{(j-j_D)^2}\cdot {\scriptstyle j_D\left(\tfrac{3}{2}\cdot\tfrac{j_D}{j_D-1728}+\big(\tfrac{4}{3}+\tfrac{\psi_D}{6}\big)\right)}+\tfrac{E_4}{(j-j_D)^3}\cdot 2j_D^2.
\end{align*}}%
\end{example}

\begin{example}
Let $k=4$ and let $D=-7$ so that $j_D=-3375$ and $\psi_D=5/21$. By \Cref{k=4-cm-example}, we have
\begin{align*}
    G_{4,-7}^{(1)}&\eq\frac{E_4}{j+3375} \\
    G_{4,-7}^{(2)}&\eq\frac{19}{27}\cdot\frac{E_4}{j+3375}-3375\cdot\frac{E_4}{(j+3375)^2} \\
    G_{4,-7}^{(3)}&\eq\frac{6995}{11907}\cdot\frac{E_4}{j+3375}-\frac{55875}{7}\cdot\frac{E_4}{(j+3375)^2}+22781250\cdot\frac{E_4}{(j+3375)^3}.
\end{align*}
Here it is easy to check numerically that $G_{4,-7}^{(2)}$ is $1$-magnetic after clearing the denominators.\footnote{After clearing the denominators, this is the same example discussed in \cite[Section~5.2]{bordignon}.}

To give concrete examples, we use the formulations via $\Omega_{p,D}$ and consider $p=2$ and $p=13$, with $(\frac{-7}{2})=1$ and $(\frac{-7}{13})=-1$. Note also that primes that divide $D$, $j_D$, or $j_D-1728$ are $3,5,7$.

For $p=2$, we have
\begin{align*}
    \Omega_{2,-7}\eq 1+O(2^3)\;\equiv\;1\pmod{2^3},
\end{align*}
and one observes numerically that for all $n\in\Z^+$,
\begin{alignat*}{2}
    a_{2n}(G_{4,-7}^{(1)})&\;\equiv\; a_n(G_{4,-7}^{(1)})&&\pmod{2^3} \\
    a_{2n}(G_{4,-7}^{(2)})&\;\equiv\; 2\thin a_n(G_{4,-7}^{(2)})&&\pmod{2^5} \\
    a_{2n}(G_{4,-7}^{(3)})&\;\equiv\; 4\thin a_n(G_{4,-7}^{(3)})&&\pmod{2^{10}}.
\end{alignat*}
Here \Cref{ordinary-prime-conjecture-cm} only predicts congruences modulo $2^3$, though we observe stronger congruences for the latter two.

For $p=13$, we have
\begin{align*}
    \Omega_{13,-7}\eq 9 + 8\cdot 13 + 4\cdot 13^2 + O(13^3)\;\equiv\; 789\pmod{13^3},
\end{align*}
and one notes that $c_{-7}=-\tfrac{25}{81}$ by \Cref{c_D-value}. Then, one observes numerically that for all $n\in\Z^+$,
\begin{alignat*}{3}
    a_{13n}(G_{4,-7}^{(1)})&\;\equiv\; {\scriptstyle (2!)^{-1}\cdot(-\frac{25}{81})^{-1}\cdot(-13)^2\cdot 789}\cdot a_n(G_{4,-7}^{(3)})&&\;\equiv\; 1183\thin a_n(G_{4,-7}^{(3)})&&\pmod{13^3} \\
    a_{13n}(G_{4,-7}^{(2)})&\;\equiv\; -13\thin a_n(G_{4,-7}^{(2)}) && &&\pmod{13^3} \\
    a_{13n}(G_{4,-7}^{(3)})&\;\equiv\; {\scriptstyle 2!\cdot(-\frac{25}{81})\cdot 789^{-1}}\cdot a_n(G_{4,-7}^{(1)})&&\;\equiv\; 67 \thin a_n(G_{4,-7}^{(1)})&&\pmod{13^3}.
\end{alignat*}
\end{example}

\begin{example}
\label{k=6-cm-example}
Let $k=6$. By computations, we have
\begin{align*}
    G_{6,D}^{(1)}&\eq {\scriptstyle\frac{E_6}{j-j_D}} \\
    G_{6,D}^{(2)}&\eq {\scriptstyle\frac{E_6}{j-j_D}\cdot\big(\frac{2}{3}+\frac{\psi_D}{3}\big)+\frac{E_6}{(j-j_D)^2}\cdot j_D} \\
    G_{6,D}^{(3)}&\eq {\scriptstyle\frac{E_6}{j-j_D}\cdot\left(\frac{13}{36}\cdot\frac{j_D}{j_D-1728}+\big(\frac{2}{9}+\frac{\psi_D}{3}+\frac{\psi_D^2}{12}\big)\right)+\frac{E_6}{(j-j_D)^2}\cdot j_D\left(\frac{1}{2}\cdot\frac{j_D}{j_D-1728}+\big(2+\frac{\psi_D}{2}\big)\right)+\frac{E_6}{(j-j_D)^3}\cdot 2j_D^2} \\
    G_{6,D}^{(4)}&\eq {\scriptstyle\frac{E_6}{j-j_D}\cdot\left(\big(\frac{11}{18}+\frac{13\psi_D}{72}\big)\cdot\frac{j_D}{j_D-1728}+\big(\frac{\psi_D}{9}+\frac{\psi_D^2}{12}+\frac{\psi_D^3}{72}\big)\right)+\frac{E_6}{(j-j_D)^2}\cdot j_D\left(\big(\frac{245}{72}+\frac{\psi_D}{4}\big)\cdot\frac{j_D}{j_D-1728}+\big(\frac{20}{9}+\psi_D+\frac{\psi_D^2}{8}\big)\right)} \\
    &{\scriptstyle\hspace{1cm}+\frac{E_6}{(j-j_D)^3}\cdot j_D^2\left(3\cdot\frac{j_D}{j_D-1728}+\big(8+\psi_D\big)\right)+\frac{E_6}{(j-j_D)^4}\cdot 6j_D^3} \\
    G_{6,D}^{(5)}&\eq {\scriptstyle\frac{E_6}{j-j_D}\cdot\left(\frac{277}{864}\cdot\big(\frac{j_D}{j_D-1728}\big)^2+\big(\frac{5}{12}+\frac{11\psi_D}{54}+\frac{13\psi_D^2}{432}\big)\cdot\frac{j_D}{j_D-1728}+\big(\frac{\psi_D^2}{54}+\frac{\psi_D^3}{108}+\frac{\psi_D^4}{864}\big)\right)} \\
    &{\scriptstyle\hspace{1cm}+\frac{E_6}{(j-j_D)^2}\cdot j_D\left(\frac{31}{18}\cdot\big(\frac{j_D}{j_D-1728}\big)^2+\big(\frac{2095}{216}+\frac{245\psi_D}{216}+\frac{\psi_D^2}{24}\big)\cdot\frac{j_D}{j_D-1728}+\big(\frac{40}{27}+\frac{20\psi_D}{27}+\frac{\psi_D^2}{6}+\frac{\psi_D^3}{72}\big)\right)} \\
    &{\scriptstyle\hspace{1cm}+\frac{E_6}{(j-j_D)^3}\cdot j_D^2\left(\frac{3}{2}\cdot\big(\frac{j_D}{j_D-1728}\big)^2+\big(\frac{242}{9}+\psi_D\big)\cdot\frac{j_D}{j_D-1728}+\big(\frac{160}{9}+\frac{8\psi_D}{3}+\frac{\psi_D^2}{6}\big)\right)} \\
    &{\scriptstyle\hspace{1cm}+\frac{E_6}{(j-j_D)^4}\cdot j_D^3\left(18\cdot\frac{j_D}{j_D-1728}+(40+2\psi_D)\right)+\frac{E_6}{(j-j_D)^5}\cdot24j_D^4}.
\end{align*}
\end{example}

\begin{example}
\label{D=-28-example}
Let $k=6$ and let $D=-28$ so that $j_D=255^3$ and $\psi_D=85/133$. We will focus on $r=2,4$. By \Cref{k=6-cm-example}, we have
\begin{align*}
    G_{6,-28}^{(2)}&\eq {\scriptstyle\frac{117}{133}\cdot\frac{E_6}{j-255^3}+255^3\cdot\frac{E_6}{(j-255^3)^2}} \\
    G_{6,-28}^{(4)}&\eq {\scriptstyle\frac{477506455}{571690791}\cdot\frac{E_6}{j-255^3}+\frac{2922956049071875}{27223371}\cdot\frac{E_6}{(j-255^3)^2}+\frac{1155258750095437500}{361}\cdot\frac{E_6}{(j-255^3)^3}+6\cdot 255^9\cdot\frac{E_6}{(j-255^3)^4}}.
\end{align*}
Here it is easy to check numerically that both $G_{6,-28}^{(2)}$ and $G_{6,-28}^{(4)}$ are $1$-magnetic after clearing the denominators.

As before, we consider concrete examples by choosing $p=11$ and $p=13$, with $(\frac{-28}{11})=1$ and $(\frac{-28}{13})=-1$. Note also that primes that divide $D$, $j_D$, or $j_D-1728$ are $2,3,5,7,17,19$.

For $p=11$, we have
\begin{align*}
    \Omega_{11,-7}\eq 5 + 10\cdot 11 + 11^2 + 9\cdot 11^3 + 9\cdot 11^4 + O(11^5)\;\equiv\;143984\pmod{11^5},
\end{align*}
and one observes numerically that for all $n\in\Z^+$,
\begin{alignat*}{3}
    a_{11n}(G_{6,-28}^{(2)})&\;\equiv\;{\scriptstyle 11\cdot 143984}\cdot a_n(G_{6,-28}^{(2)})&&\;\equiv\; 134365\thin a_n(G_{6,-28}^{(2)}) &&\pmod{11^5} \\
    a_{11n}(G_{6,-28}^{(4)})&\;\equiv\; {\scriptstyle 11^3\cdot 143984^{-1}}\cdot a_n(G_{6,-28}^{(4)})&&\;\equiv\; 26620\thin a_n(G_{6,-28}^{(4)})&&\pmod{11^5}.
\end{alignat*}

For $p=13$, we have
\begin{align*}
    \Omega_{13,-7}\eq 9 + 8\cdot 13 + 4\cdot 13^2 + 4\cdot 13^3 + 11\cdot 13^4 + O(13^5)\;\equiv\; 323748\pmod{13^5},
\end{align*}
and one notes that $c_{-28}=-\tfrac{28900}{263169}$ by \Cref{c_D-value}. Then, one observes numerically that for all $n\in\Z^+$,
\begin{alignat*}{3}
    a_{13n}(G_{6,-28}^{(2)})&\;\equiv\; {\scriptstyle (3!)^{-1}\cdot(-\frac{28900}{263169})^{-1}\cdot (-13)^3\cdot 323748}\cdot a_n(G_{6,-28}^{(4)})&&\;\equiv\; 4394\thin a_n(G_{6,-28}^{(4)}) &&\pmod{13^5} \\
    a_{13n}(G_{6,-28}^{(4)})&\;\equiv\; {\scriptstyle 3!\cdot(-\frac{28900}{263169})\cdot(-13)\cdot 323748^{-1}}\cdot a_n(G_{6,-28}^{(2)})&&\;\equiv\; 185653\thin a_n(G_{6,-28}^{(2)})&&\pmod{13^5}.
\end{alignat*}
\end{example}

\begin{example}
\label{D=-27-example}
Let $k=6$ and let $D=-27$ so that $j_D=-3\cdot 160^3$ and $\psi_D=160/253$. We will focus on $r=1,5$. By \Cref{k=6-cm-example}, we have
\begin{align*}
    G_{6,-27}^{(1)}&\eq{\scriptstyle\frac{E_6}{j+3\cdot160^3}} \\
    G_{6,-27}^{(5)}&\eq {\scriptstyle\frac{3637833600}{4097152081}\cdot\frac{E_6}{j+3\cdot160^3}-\frac{713572831887360000}{4097152081}\cdot\frac{E_6}{(j+3\cdot160^3)^2}+\frac{30034105244414115840000000}{4097152081}\cdot\frac{E_6}{(j+3\cdot160^3)^3}} \\
    &\hspace{1cm}{\scriptstyle-\frac{7038224068289495040000000000}{64009}\cdot\frac{E_6}{(j+3\cdot160^3)^4}+24\cdot 3^4\cdot160^{12}\cdot\frac{E_6}{(j+3\cdot160^3)^5}}.
\end{align*}
We now consider $p=7$ and $p=17$, with $(\frac{-27}{7})=1$ and $(\frac{-27}{17})=-1$. Note also that primes that divide $D$, $j_D$, or $j_D-1728$ are $2,3,5,11,23$.

For $p=7$, we have
\begin{align*}
    \Omega_{7,-3}\eq 1 + 5\cdot 7 + 5\cdot 7^2 + 4\cdot 7^3 + 7^4 + O(7^5)\;\equiv\;4054\pmod{7^5},
\end{align*}
and one observes numerically that for all $n\in\Z^+$,
\begin{alignat*}{3}
    a_{7n}(G_{6,-27}^{(1)})&\;\equiv\; 4054^2\cdot a_n(G_{6,-27}^{(1)})&&\;\equiv\; 14477\thin a_n(G_{6,-27}^{(1)}) &&\pmod{7^5} \\
    a_{7n}(G_{6,-27}^{(5)})&\;\equiv\; 7^4\cdot 4054^{-2}\cdot a_n(G_{6,-27}^{(5)})&&\;\equiv\; 2401\thin a_n(G_{6,-27}^{(5)})&&\pmod{7^5}.
\end{alignat*}

For $p=17$, we have
\begin{align*}
    \Omega_{17,-3}\eq 16 + 2\cdot 17 + 5\cdot 17^2 + 6\cdot 17^3 + 10\cdot 17^4 + O(17^5) \;\equiv\;866183\pmod{17^5},
\end{align*}
and one notes that $c_{-27}\eq-\tfrac{3600}{64009}$. Then, one observes numerically that for all $n\in\Z^+$,
\begin{alignat*}{3}
    a_{17n}(G_{6,-27}^{(1)})&\;\equiv\; {\scriptstyle (4!)^{-1}\cdot(-\frac{3600}{64009})^{-2}\cdot (-17)^4\cdot 866183^2}\cdot a_n(G_{6,-27}^{(5)})&&\;\equiv\; 417605\thin a_n(G_{6,-27}^{(5)}) &&\pmod{17^5} \\
    a_{17n}(G_{6,-28}^{(5)})&\;\equiv\; {\scriptstyle 4!\cdot(-\frac{28900}{263169})^2\cdot 866183^{-2}}\cdot a_n(G_{6,-27}^{(1)})&&\;\equiv\; 850024\thin a_n(G_{6,-27}^{(1)})&&\pmod{17^5}.
\end{alignat*}
\end{example}

\subsection{The cases when $D\in\{-3,-4\}$}
\label{0-1728-case-section}

We will now turn to the cases when $D\in\{-3,-4\}$, or equivalently, when $j(C)\in\{0,1728\}$. While the heuristics in these cases are essentially the same as in the other CM cases, we first briefly discuss the behaviors of $\frac{E_k}{j}$ and $\frac{E_k}{j-1728}$ for motivation.

The reason why $\frac{E_k}{j}$ and $\frac{E_k}{j-1728}$ behave differently is that they do not necessarily have a simple pole and thus resemble $G_{k,D}^{(r)}$ for some $r>1$ rather than $r=1$. For example, the following holds.
\begin{enumerate}[label=(\arabic*)]
    \item $\frac{E_4}{j}$ and $\frac{E_4}{j-1728}$ are both $1$-magnetic.
    \item $\frac{E_6}{j}$ is $2$-magnetic.
    \item $\frac{E_8}{j-1728}$ is (numerically) $1$-magnetic.
    \item $\frac{E_{10}}{j}$ is (numerically) $1$-magnetic.
\end{enumerate}

When $\frac{E_k}{j}$ or $\frac{E_k}{j-1728}$ does have a simple pole, all the phenomena in \Cref{simple-pole-section} still hold for $\frac{E_k}{j}$ or $\frac{E_k}{j-1728}$, though one would need to replace the condition that $v_p(j(C))=0=v_p(j(C)-1728)$ by some other appropriate condition.

\begin{example}
Consider $\frac{E_{14}}{j}$. Let $C/\Q$ be the elliptic curve given by $y^2+y=x^3$ with LMFDB label \href{https://www.lmfdb.org/EllipticCurve/Q/27/a/4}{27.a4} and $j(C)=0$. Then, we have the following numerical congruences.
\begin{enumerate}[label=(\arabic*)]
    \item For all primes $p$ with $(\tfrac{-3}{p})=1$ and all $n,l\in\Z^+$,
    \begin{align*}
    a_{np^l}\bigg(\frac{E_{14}}{j}\bigg)\;\equiv\;u_p(C)^{12}a_{np^{l-1}}\bigg(\frac{E_{14}}{j}\bigg)\pmod{p^{13l}},
    \end{align*}
    where $u_p(C)$ is the root of $X^2-a_p(C)X+p$ that is a $p$-adic unit.
    \item For all primes $p$ with $p\geq 13$ and $(\tfrac{-3}{p})=-1$ and all $n,l\in\Z^+$,
    \begin{align*}
    a_{np^l}\bigg(\frac{E_{14}}{j}\bigg)\;\equiv\;p^{12}a_{np^{l-2}}\bigg(\frac{E_{14}}{j}\bigg)\pmod{p^{13l-1}}.
    \end{align*}
\end{enumerate}
Here one could remove the assumption that $p\geq 13$ in the supersingular case by considering $12!\cdot\frac{E_4}{j}$ instead. The appearance of $12!$ should be related to the appearance of $(k-2)!$ in \Cref{p-adic-gamma-supersingular-conjecture} when $r=1$ and $r=k-2$ (see also \Cref{supersingular-prime-conjecture-higher}).
\end{example}

We now turn to the general case. We will describe everything in terms of $\Omega_{p,D}$ so that there is no need to make a choice of an elliptic curve $C/\Q$ with CM by $\cl{O}_D$. Let $k\in\{4,6,8,10,14\}$. Let $D\in\{-3,-4\}$ and define $\alpha_D\in\clh$ and $\cl{O}_D$ accordingly. Consider
\begin{align*}
    \Span\bigg\{\frac{E_k}{j-j(\alpha_D)},\;\frac{E_k}{(j-j(\alpha_D))^2},\ldots,\frac{E_k}{(j-j(\alpha_D))^{\kappa}}\bigg\},
\end{align*}
where $\kappa=\kappa_{k,D}$ is the largest integer such that $\frac{E_k}{(j-j(\alpha_D))^{\kappa}}$ has a pole of order $\leq k-1$. For $1\leq i\leq\kappa$, let $r_i$ denote the order of the pole of $\frac{E_k}{(j-j(\alpha_D))^i}$. Here it is important to note that for $1\leq i\leq \kappa$,
\begin{align*}
    r_i+r_{\kappa+1-i}\eq k.
\end{align*}

Now, we still consider $(\partial_\tau^rG_k)(z,\tau)$ but with different normalizations so that the evaluation at $\alpha_D$ makes sense, does not vanish, and is independent of the choice of $\alpha_D$.

For $D=-3$, one considers for $1\leq i\leq\kappa$ that
\begin{align}
    G_{k,-3}^{(r_i)}(z)\deq \bigg(j(\tau)^{-\lfloor\frac{r_i-1}{3}\rfloor}\cdot\frac{\Delta}{E_{14-k}}(\tau)\cdot\bigg(-\frac{E_4}{E_6}(\tau)\bigg)^{r_i-1}\cdot (\partial_\tau^{r_i-1} G_k)(z,\tau)\bigg)\bigg|_{\tau=\alpha_{-3}}.
\end{align}
For $D=-4$, one considers for $1\leq i\leq\kappa$ that
\begin{align}
    G_{k,-4}^{(r_i)}(z)\thin:=\thin\bigg((j(\tau)-1728)^{\lfloor\frac{r_i-1}{2}\rfloor}\cdot\frac{\Delta}{E_{14-k}}(\tau)\cdot\bigg(-\frac{E_4}{E_6}(\tau)\bigg)^{r_i-1}\cdot (\partial_\tau^{r_i-1} G_k)(z,\tau)\bigg)\bigg|_{\tau=\alpha_{-4}}.
\end{align}
The correction factors here should be related to the normalizing constant $c_D$ in \Cref{p-adic-gamma-supersingular-conjecture}, as we will see in \Cref{renormalized-remark}. This indicates the potential existence of a uniform normalization of $G_{k,D}^{(r)}$ that leads to a uniform description of the $p$-adic behaviors of $G_{k,D}^{(r)}$.

\begin{conjecture}
\label{ordinary-prime-conjecture-D=3-4}
Let $k\in\{4,6,8,10,14\}$, $D=-3$ or $-4$, and $1\leq i\leq\kappa_{k,D}$. Let $p\geq 5$ be a prime with $(\frac{D}{p})=1$. Then, for all $n,l\in\Z^+$,
\begin{align*}
    a_{np^l}(G_{k,D}^{(r_i)})\;\equiv\; p^{r_i-1}\Omega_{p,D}^{\frac{k}{2}-r_i} a_{np^l}(G_{k,D}^{(r_i)})\pmod{p^{(k-1)l}}.
\end{align*}
\end{conjecture}

For supersingular primes, we need to consider different normalizing constants from the one considered in \Cref{p-adic-gamma-supersingular-conjecture}.

\begin{conjecture}
\label{p-adic-gamma-supersingular-conjecture-D=3-4}
Let $k\in\{4,6,8,10,14\}$, $D=-3$ or $-4$, and $1\leq i\leq\kappa_{k,D}$. Let $p\geq 5$ be a prime with $(\frac{D}{p})=1$. Then, for all $n,l\in\Z^+$,
\begin{align*}
    (r_i-1)!\cdot a_{np^l}(G_{k,D}^{(k-r_i)})\;\equiv\; (k-1-r_i)!\thin c_D^{\frac{k}{2}-r_i}\cdot(-p)^{r_i-1} \Omega_{p,D}^{-(\frac{k}{2}-r_i)}a_{np^{l-1}}(G_{k,D}^{(r_i)})\pmod{p^{(k-1)l}},
\end{align*}
where we write $j_D=j(\alpha_D)$ and
\begin{align*}
    c_D\eq\begin{cases}
        (-1)^D\cdot (j_D-1728)^{-1}\cdot D\eq-\frac{1}{576} & \text{if }D=-3, \\
        (-1)^D\cdot |j_D|^{2/3}\cdot D\eq -576 & \text{if }D=-4.
    \end{cases}
\end{align*}
\end{conjecture}

\begin{remark}
\label{renormalized-remark}
This constant $c_D$ is essentially the same as the one in \Cref{p-adic-gamma-supersingular-conjecture}. To make the comparison, let $G_{k,D}^{(r)}$ and $c_D$ denote the modular form and the normalizing constant given in \Cref{p-adic-gamma-supersingular-conjecture} and let $\wh{G}_{k,D}^{(r_i)}$ and $\wh{c}_D$ denote the ones given in \Cref{p-adic-gamma-supersingular-conjecture-D=3-4}. For $D=-3$, one observes that formally
\begin{align*}
    \wh{G}_{k,D}^{(r_i)}\eq j_D^{-\lfloor\frac{r_i-1}{3}\rfloor}\cdot G_{k,D}^{(r_i)}.
\end{align*}
Plugging this in \Cref{p-adic-gamma-supersingular-conjecture}, one formally obtains that
\begin{align*}
    (r_i-1)! a_{np^l}(\wh{G}_{k,D}^{(k-r_i)})\;\equiv\; (k-1-r_i)!\thin \wt{c}\cdot(-p)^{r_i-1} \Omega_{p,D}^{-(\frac{k}{2}-r_i)}a_{np^{l-1}}(\wh{G}_{k,D}^{(r_i)})\pmod{p^{(k-1)l}},
\end{align*}
where
\begin{align*}
    \wt{c}\eq j_D^{\lfloor\frac{r_i-1}{3}\rfloor-\lfloor\frac{k-r_i-1}{3}\rfloor}\cdot c_D^{\frac{k}{2}-r_i}.
\end{align*}
Now, one notices that $k-r_i=r_{\kappa+1-i}$ and that $r_i\equiv r_j\pmod{3}$ for all $1\leq i,j\leq\kappa$, so
\begin{align*}
    \lfloor\tfrac{r_i-1}{3}\rfloor-\lfloor\tfrac{k-r_i-1}{3}\rfloor\eq -\tfrac{k-2r_i}{3}.
\end{align*}
Hence,
\begin{align*}
    \wt{c}\eq \big((-1)^D\cdot (j_D-1728)^{-1}\cdot D\big)^{\frac{k}{2}-r_i}\eq\wh{c}_D^{\frac{k}{2}-r_i},
\end{align*}
yielding \Cref{p-adic-gamma-supersingular-conjecture-D=3-4} when $D=-3$. It is easy to check that the same formal argument works for $D=-4$.
\end{remark}

\begin{conjecture}
\label{cm-magnetic-conjecture-D=3-4}
Let $k\in\{4,6,8,10,14\}$, $D=-3$ or $-4$, and $1\leq i\leq\kappa_{k,D}$. Let $A=A_{k,D}^{(r_i)}$ be a nonzero integer such that $\wt{G}_{k,D}^{(r_i)}=AG_{k,D}^{(r_i)}$ has integer coefficients with no common divisors. Then, $\wt{G}_{k,D}^{(r_i)}$ is $(r_i'-1)$-magnetic, i.e., for all $n\in\Z^+$,
\begin{align*}
    n^{r_i'-1}\thin\big|\thin a_n(\wt{G}_{k,D}^{(r_i)}),
\end{align*}
where $r_i'=\min(r_i,k-r_i)$.
\end{conjecture}

The family $\{G_{k,D}^{(r_i)}\}_{1\leq i\leq \kappa}$ should also satisfy other phenomena observed in \Cref{magnetic-other-observation-section} which we will not rewrite here. We will now end this subsection with some examples.

\begin{example}
Let $k=4$. We have $\kappa=1$ and $r_1=2$ for both $D\in\{-3,-4\}$. In this case,
\begin{align*}
    G_{4,-3}^{(2)}\eq\frac{1}{3}\cdot\frac{E_4}{j}\quad\text{ and }\quad G_{4,-4}^{(2)}\eq 864\cdot\frac{E_4}{j-1728}.
\end{align*}
These identifications now provide a heuristic explanation for the properties of $\frac{E_4}{j}$ and $\frac{E_4}{j-1728}$:
\begin{enumerate}[label=(\arabic*)]
    \item the $1$-magnetic properties, i.e., \Cref{1-magnetic-k=4-j=0-1728}, should be explained by \Cref{cm-magnetic-conjecture};
    \item the supercongruences, i.e., \Cref{supercongruence-k=4-j=0-1728}, should be explained by \Cref{cm-middle-supercongruence}.
\end{enumerate}
Similarly, the $2$-magnetic property and the supercongruence of $\frac{E_6}{j}$ described in \Cref{k=6-j=0-remark} can be explained through this heuristic by considering $k=6$ and $D=-3$.
\end{example}

\begin{example}
Let $k=6$ and $D=-4$. 
We have $\kappa=3$ and $r_1=1,r_2=3,r_3=5$. In this case,
\begin{align*}
    G_{6,-4}^{(1)}&\eq\frac{E_6}{j-1728} \\
    G_{6,-4}^{(3)}&\eq 624\cdot\frac{E_6}{j-1728}+1492992\cdot\frac{E_6}{(j-1728)^2} \\
    G_{6,-4}^{(5)}&\eq 957312\cdot\frac{E_6}{j-1728}+8886288384\cdot\frac{E_6}{(j-1728)^2}+13374150672384\cdot\frac{E_6}{(j-1728)^3}.
\end{align*}
Here it is easy to check numerically that $G_{6,-4}^{(3)}$ (in fact $\frac{1}{48}G_{6,-4}^{(3)}$) is $2$-magnetic.

To give concrete examples, we choose $p=5$ and $p=7$, with $(\frac{-4}{5})=1$ and $(\frac{-4}{7})=-1$. Since the primes are small in this case, we will look at the $np^2$-th coefficient.

For $p=5$, we have
\begin{align*}
    \Omega_{5,-4}\eq 4 + 3\cdot 5 + 4\cdot 5^3 + 5^4 + 5^5 + 3\cdot 5^6 + 2\cdot 5^8 + 2\cdot 5^9 + O(5^{10})\;\equiv\;4738644\pmod{5^{10}},
\end{align*}
and one observes numerically that for all $n\in\Z^+$,
\begin{alignat*}{3}
    a_{25n}(G_{6,-4}^{(1)})&\;\equiv\; 4738644^2\cdot a_{5n}(G_{6,-4}^{(1)}) && \;\equiv\; 864986\thin a_{5n}(G_{6,-4}^{(1)}) &&\pmod{5^{10}} \\
    a_{25n}(G_{6,-4}^{(3)})&\;\equiv\; 5^2 \cdot a_{5n}(G_{6,-4}^{(3)}) && &&\pmod{5^{11}} \\
    a_{25n}(G_{6,-4}^{(5)})&\;\equiv\; 5^4\cdot4738644^{-2} \cdot a_{5n}(G_{6,-4}^{(5)}) &&\;\equiv\; 8900625\thin a_{5n}(G_{6,-4}^{(5)}) &&\pmod{5^{11}}.
\end{alignat*}
For $p=7$, we have
\begin{align*}
    \Omega_{7,-4}&\eq 6 + 2\cdot 7 + 4\cdot 7^2 + 4\cdot 7^3 + 5\cdot 7^4 + 5\cdot 7^5 + 5\cdot 7^6 + 4\cdot 7^7 + 7^8 + 2\cdot 7^9 + O(7^{10}) \\
    &\;\equiv\; 90452060\pmod{7^{10}},
\end{align*}
and one notes that $c_{-4}=-576$. Then, one observes numerically that for all $n\in\Z^+$,
\begin{alignat*}{3}
    a_{49n}(G_{6,-4}^{(1)})&\;\equiv\;{\scriptstyle(4!)^{-1}\cdot(-576)^{-2}\cdot (-7)^4\cdot 90452060^2} \cdot a_{7n}(G_{6,-4}^{(5)})&&\;\equiv\; 165035136\thin a_{7n}(G_{6,-4}^{(5)})&&\pmod{7^{10}} \\
    a_{49n}(G_{6,-4}^{(3)})&\;\equiv\; (-7)^2\cdot a_{7n}(G_{6,-4}^{(3)}) && &&\pmod{7^{11}} \\
   a_{49n}(G_{6,-4}^{(5)})&\;\equiv\; {\scriptstyle 4! \cdot(-576)^2\cdot 90452060^{-2}}\cdot a_{7n}(G_{6,-4}^{(1)})&&\;\equiv\; 199720267 \thin a_{7n}(G_{6,-4}^{(1)})&&\pmod{7^{11}}.
\end{alignat*}
In both cases, \Cref{ordinary-prime-conjecture-D=3-4} or \Cref{p-adic-gamma-supersingular-conjecture-D=3-4} only predicts congruences modulo $p^{10}$, though we observe stronger congruences for the latter two.
\end{example}

\begin{example}
Let $k=8$ and $D=-3$. We have $\kappa=3$ and $r_1=1$, $r_2=4$, $r_3=7$. In this case, we have
\begin{align*}
    G_{8,-3}^{(1)}&\eq\frac{E_8}{j} \\
    G_{8,-3}^{(4)}&\eq -\frac{1}{13824}\cdot\frac{E_8}{j}+\frac{2}{9}\cdot\frac{E_8}{j^2} \\
    G_{8,-3}^{(7)}&\eq\frac{227}{7739670528}\cdot\frac{E_8}{j}-\frac{295}{559872}\cdot\frac{E_8}{j^2}+\frac{80}{81}\cdot\frac{E_8}{j^3}.
\end{align*}
Here it is easy to check numerically that $G_{8,-3}^{(4)}$ is $3$-magnetic after clearing the denominators. As before, we choose $p=5$ and $p=7$, with $(\frac{-3}{5})=-1$ and $(\frac{-3}{7})=1$.

For $p=7$, we have
\begin{align*}
    \Omega_{7,-3}&\eq 1 + 5\cdot 7 + 5\cdot 7^2 + 4\cdot 7^3 + 7^4 + 6\cdot 7^5 + 4\cdot 7^6 + O(7^7)\;\equiv\;575492\pmod{7^7},
\end{align*}
and one observes numerically that for all $n\in\Z^+$,
\begin{alignat*}{3}
    a_{7n}(G_{8,-3}^{(1)})&\;\equiv\; 575492^3\cdot a_n(G_{8,-3}^{(1)}) && \;\equiv\; 705608\thin a_n(G_{8,-3}^{(1)}) &&\pmod{7^7} \\
    a_{7n}(G_{8,-3}^{(4)})&\;\equiv\; 7^3 \cdot a_n(G_{8,-3}^{(4)}) && &&\pmod{7^8} \\
    a_{7n}(G_{8,-3}^{(7)})&\;\equiv\; 7^6\cdot575492^{-3} \cdot a_n(G_{8,-3}^{(7)}) &&\;\equiv\; 7^6\cdot a_n(G_{8,-3}^{(7)}) &&\pmod{7^7}.
\end{alignat*}
Here \Cref{ordinary-prime-conjecture-D=3-4} only predicts congruences modulo $7^7$, though we observe a stronger congruence for the middle one.

For $p=5$, we have
\begin{align*}
    \Omega_{5,-3}&\eq 4 + 5 + 3\cdot 5^2 + 2\cdot 5^3 + 5^4 + 3\cdot 5^5 + 5^6 + O(5^7)\;\equiv\; 25959\pmod{5^7},
\end{align*}
and one notes that $c_{-3}=-\frac{1}{576}$. Then, one observes numerically that for all $n\in\Z^+$,
\begin{alignat*}{3}
    6!\cdot a_{5n}(G_{8,-3}^{(1)})&\;\equiv\;{\scriptstyle (-\frac{1}{576})^{-3}\cdot (-5)^6\cdot 25959^3}\cdot a_n(G_{8,-3}^{(7)}) && \;\equiv\; 5^6\cdot a_n(G_{8,-3}^{(7)}) &&\pmod{5^7} \\
    a_{5n}(G_{8,-3}^{(4)})&\;\equiv\; (-5)^3 \cdot a_n(G_{8,-3}^{(4)}) && &&\pmod{5^7} \\
    a_{5n}(G_{8,-3}^{(7)})&\;\equiv\; {\scriptstyle 6!\cdot (-\frac{1}{576})^3\cdot 25959^{-3}} \cdot a_n(G_{8,-3}^{(1)}) &&\;\equiv\; 62445\cdot a_n(G_{8,-3}^{(1)}) &&\pmod{5^7}.
\end{alignat*}
In the first congruence, since $5$ divides $6!=(k-2)!$, it is important that we copy the exact formula written in \Cref{p-adic-gamma-supersingular-conjecture-D=3-4}, i.e., put $6!$ in front of $G_{8,-3}^{(1)}$ rather than divide both sides by it as we always did previously. Otherwise, the congruence would only hold modulo $5^6$.
\end{example}

\section{Generalization to other weights}
\label{other-weight-section}

We will now discuss the generalization of the whole setting to other weights. In general, the coefficients of meromorphic modular forms having precisely one pole at a non-cuspidal point are an interplay between cusp forms and the elliptic curve corresponding to the pole.

Let $g\in\Z[[q]]$ be a holomorphic modular form of weight $k$ and level $1$ and let $C/\Q$ be an elliptic curve (with $j(C)\notin\{0,1728\}$) such that $\frac{g}{j-j(C)}$ has a pole. For $r\in\Z^+$, write
\begin{align*}
    F_{g,C}^{(r)}\deq\frac{g}{(j-j(C))^r}\eq\sum_{n=1}^\infty a_n(F_{g,C}^{(r)})q^n.
\end{align*}
In general, it is necessary to remove the influence from the cusp forms to obtain the same congruences observed for $F_{k,C}^{(r)}$ in \Cref{simple-pole-section} and \Cref{higher-pole-section}. This is possible via the help of relations. An integer sequence $\lambda=(\lambda_m)_{m\in\Z^+}$ is called a \emph{relation} for $S_k=S_k(1)$ if
\begin{enumerate}
    \item $\lambda_m=0$ for all but finitely many~$m$;
    \item $\sum_{m=1}^\infty \lambda_m a_m\eq 0$ for any cusp form $\sum_{m=1}^\infty a_mq^m\in S_k$.
\end{enumerate}
For a modular form $F$ of weight $k$ and a relation $\lambda$ for $S_k$, write
\begin{align}
    F|\lambda\deq\sum_{m=1}^\infty \lambda_m F|T_{m,k},
\end{align}
where $T_{m,k}$ is the $m$-th Hecke operator of weight $k$. Note that $\sum_{m=1}^\infty \lambda_m(\sum_{m=1}^\infty a_mq^m)|T_{m,k}=0$ for any cusp form $\sum_{m=1}^\infty a_mq^m\in S_k$, so replacing $F$ by $F|\lambda$ will ``kill the cuspidal part'' of~$F$.\footnote{The consideration of relations first appeared in Gross--Zagier \cite[p.~316]{gross-zagier}. This idea was used in L\"{o}brich--Schwagenscheidt \cite[p.~2383]{loebrich-schwagenscheidt} to construct examples of magnetic modular forms and in Brown--Fonseca \cite[Remark~8.10]{brown-fonseca} to study meromorphic modular forms.}

Now, one simply fixes a relation $\lambda$ for $S_k$ and considers $F_{g,C}^{(r)}|\lambda$. Then, all the phenomena in \Cref{simple-pole-section} and \Cref{higher-pole-section} should still hold after replacing $F_{k,C}^{(r)}$ by $F_{g,C}^{(r)}|\lambda$. As an example, we have the following generalization of \Cref{good-prime-theorem}.

\begin{theorem}
\label{good-prime-theorem-higher}
Let $g\in\Z[[q]]$ be a holomorphic modular form of weight $k$ and level $1$, $C/\Q$ be an elliptic curve with $j(C)\notin\{0,1728\}$, and $\lambda$ be a relation for $S_k$. Let $p\geq 5$ be a good prime of $C$ with $v_p(j(C))=0=v_p(j(C)-1728)$. Then, for all $n\in\Z^+$,
\begin{align*}
    a_{np}(F_{g,C}^{(1)}|\lambda)\;\equiv\;a_p(C)^{k-2}a_n(F_{g,C}^{(1)}|\lambda)\pmod{p}.
\end{align*}
\end{theorem}

\begin{remark}
\label{good-prime-assumption-remark}
The theorem is conjecturally true for $p=2,3$. Depending on $k \pmod{12}$, the theorem is also conjecturally true for primes $p$ with $v_p(j(C))\neq 0$ or $v_p(j(C)-1728)\neq 0$, which we will describe here. Let
\begin{align*}
    S_{0,C}\deq\{p \mid v_p(j(C))>0\}\quad\text{ and }\quad S_{1728,C}\deq\{p\mid v_p(j(C)-1728)>0\}.
\end{align*}
For $k\in 2\Z_{\geq2}$, define
\begin{align*}
    S^{(k,C)}\deq\begin{cases}
    S_{0,C}\cup S_{1728,C}&\text{ if }k\equiv 4,12\pmod{12} \\
    S_{0,C}&\text{ if }k\equiv 6,10\pmod{12} \\
    S_{1728,C}&\text{ if }k\equiv 8\pmod{12} \\
    \emptyset&\text{ if }k\equiv 14\pmod{12}.
\end{cases}
\end{align*}
Then, the theorem should hold conjecturally for all good primes $p$ of $C$ with $p\notin S^{(k,C)}$. For example, when $k\equiv 14\pmod{12}$, the theorem should simply hold conjecturally for all good primes of $C$.
\end{remark}

In the supersingular case, we also have the following generalization of \Cref{supersingular-prime-conjecture}.

\begin{conjecture}
\label{supersingular-prime-conjecture-higher}
Let $g\in\Z[[q]]$ be a holomorphic modular form of weight $k$ and level $1$, $C/\Q$ be an elliptic curve with $j(C)\notin\{0,1728\}$, and $\lambda$ be a relation for $S_k$. Let $p$ be a supersingular prime of $C$ with $v_p(j(C))=0=v_p(j(C)-1728)$. Then, for all $1\leq r\leq k-1$ and for all $n,l\in\Z^+$,
\begin{align*}
    (k-1-r)!\cdot a_{np^l}(F_{g,C}^{(r)}|\lambda)\;\equiv\; (k-1-r)!\cdot p^{k-2}a_{np^{l-2}}(F_{g,C}^{(r)}|\lambda)\pmod{p^{(k-1)l-r}},
\end{align*}
where $a_m(\;\cdot\;):=0$ if $m\notin\Z$.
\end{conjecture}

\begin{remark}
Here the extra factor $(k-1-r)!$ should be related to the same extra factor that appears in \Cref{p-adic-gamma-supersingular-conjecture}, which is important for the congruence when $p\leq k-1-r$. The reason that this factor does not appear previously is because the weight is too small. If $k\leq 14$, then $p\leq k-2$ implies that $p\leq 11$. In this case, it is easy to check that the only supersingular $j$-invariants in $\ov{\F_p}$ are $0$ or $1728$. Hence, if $p\leq 11$ is a supersingular prime of $C$, then we must have $v_p(j(C))\neq 0$ or $v_p(j(C)-1728)\neq 0$, which is then excluded by the assumption that $v_p(j(C))=0=v_p(j(C)-1728)$.
\end{remark}

\begin{example}[$k=12$]
\label{k=12-simple-pole-example}
Let $C/\Q$ be an elliptic curve with $j(C)\notin\{0,1728\}$ and let $g$ be a holomorphic modular form of weight $12$ with integer coefficients such that
\begin{align*}
    F_{g,C}\deq\frac{g}{j-j(C)}
\end{align*}
has a pole. Since
\begin{align*}
    \Delta\eq\sum_{m=1}^\infty \tau(m)q^m
\end{align*}
is the only cusp form of weight $12$, for every $m\geq 2$, there is an obvious choice of the relation $\lambda_m=(-\tau(m),0,\ldots,0,1,0,0,\ldots)$, where the $1$ is at the $m$-th entry, and one has that
\begin{align*}
    F_{g,C}|\lambda_m\eq F_{g,C}|_{12}(T_{m,12}-\tau(m)).
\end{align*}

Let $p\geq 5$ be a good prime of $C$ with $v_p(j(C))=0=v_p(j(C)-1728)$. Then, applying \Cref{good-prime-theorem-higher} with $n=1$ yields that
\begin{align*}
    a_p(F_{g,C}|\lambda_m)\;\equiv\; a_p(C)^{10}\thin a_1(F_{g,C}|\lambda_m)\pmod{p},
\end{align*}
or equivalently,
\begin{align*}
    a_{mp}(F_{g,C})-\tau(m)a_p(F_{g,C})\;\equiv\;a_m(F_{g,C})a_p(C)^{10}-\tau(m)a_p(C)^{10}\pmod{p}.
\end{align*}

Let $p$ be a supersingular prime of $C$ with $v_p(j(C))=0=v_p(j(C)-1728)$. Then, one observes numerically that
\begin{align*}
    a_p(F_{g,C}|\lambda_m)\;\equiv\; 0\pmod{p^{10}},
\end{align*}
or equivalently,
\begin{align*}
    a_{mp}(F_{g,C})-\tau(m)a_p(F_{g,C})\;\equiv\;0\pmod{p^{10}}.
\end{align*}

This example essentially shows that the coefficients of $F_{g,C}$ are conjecturally ``determined'' (up to congruences) by the coefficients of $\Delta$ and the $a_p(C)$'s.
\end{example}

As in the cases when $k\in\{4,6,8,10,14\}$ where only $g=E_k$ is considered, for a general weight~$k$, it is also sufficient to consider one particular holomorphic modular form. For any $k\in 2\Z$, define
\begin{align*}
    g_k\deq E_{k'}\Delta^\delta,
\end{align*}
where $k'\in\{0,4,6,8,10,14\}$ and $\delta\in\Z$ satisfying that $k=k'+12\delta$. Indeed, $g_k=E_k$ when $k\in\{4,6,8,10,14\}$ and note also that $\delta=\dim S_k$ for $k\geq 4$. Before stating the next lemma, we recall that for a prime $p$, one defines
\begin{align*}
    \Z_{(p)}\deq\{a\in\Q\mid v_p(a)\geq0\},
\end{align*}
i.e., the ring of $p$-integral (rational) numbers.

\begin{lemma}
\label{one-choice-for-k}
Let $k\in 2\Z_{\geq2}$. Let $p$ be a prime and let $\alpha\in\clh$ satisfy that $j(\alpha)\in\Z_{(p)}$. 
Let $g\in\Z_{(p)}[[q]]$ be a holomorphic modular form of weight $k$ and level $1$. Then, for any $r\in\Z^+$, $\frac{g}{(j-j(\alpha))^r}$ is a $\Z_{(p)}$-linear combination of $\frac{g_k}{j-j(\alpha)},\frac{g_k}{(j-j(\alpha))^2},\ldots,\frac{g_k}{(j-j(\alpha))^r}$, and cusp forms of weight~$k$ and level $1$.
\end{lemma}
\begin{proof}
We will prove this by induction. By analyzing the location of the poles, it is easy to check that $g/g_k$ can be written as a polynomial in $j$, which has $p$-integral coefficients (see also \cite{kaneko-zagier}). In particular, $A:=g(\alpha)/g_k(\alpha)\in\Z_{(p)}$. Write 
\begin{align*}
    h\eq\frac{g-A g_k}{j-j(\alpha)}\in\Z_{(p)}[[q]].
\end{align*}
Then, $h$ is holomorphic on $\clh$ and vanishes at $\infty$, so $h\in S_k$. Now, for $r=1$,
\begin{align*}
    \frac{g}{j-j(\alpha)}\eq h+A\cdot\frac{g_k}{j-j(\alpha)},
\end{align*}
so it satisfies the requirement. For $r>1$,
\begin{align*}
    \frac{g}{(j-j(\alpha))^r}\eq\frac{h}{(j-j(\alpha))^{r-1}}+A\cdot\frac{g_k}{(j-j(\alpha))^r}.
\end{align*}
The result then follows by applying the induction hypothesis on $h$ and $r-1$.
\end{proof}
\begin{remark}
It is easy to check that the same proof holds in the version of number fields, which we will need in \Cref{good-prime-proof-section}.
\end{remark}

By \Cref{one-choice-for-k}, it suffices to only investigate properties of $\frac{g_k}{(j-j(C))^r}$. This is especially useful in the CM case. In this case, one can similarly obtain suitable linear combinations of $\{\frac{g_k}{(j-j(C))^r}\}_{1\leq r\leq k-1}$ via differentiating an immediate generalization of $G_k(z,\tau)$ such that these linear combinations, after applying $\lambda$, would still satisfy the congruences in \Cref{cm-section}. We will illustrate this now, and we will adopt all the notations in \Cref{cm-section} on negative discriminants.

For $k\in 2\Z_{\geq 2}$, write
\begin{align}
    G_k(z,\tau)\deq\frac{g_k(z)\cdot g_{2-k}(\tau)}{j(z)-j(\tau)}.
\end{align}
For $1\leq r\leq k-1$, write
\begin{align}
    G_{k,D}^{(r)}(z)\deq \bigg(g_{2-k}(\tau)^{-1}\cdot\bigg(-\frac{E_4}{E_6}(\tau)\bigg)^{r-1}\cdot (\partial_\tau^{r-1} G_k)(z,\tau)\bigg)\bigg|_{\tau=\alpha_D}.
\end{align}
Now, one simply fixes a relation $\lambda$ for $S_k$ and considers $G_{k,D}^{(r)}|\lambda$. Then, all the phenomena in \Cref{cm-section} should still hold after replacing $G_{k,D}^{(r)}$ with $G_{k,D}^{(r)}|\lambda$. As an example, we have the following conjectures.

\begin{conjecture}
\label{ordinary-prime-conjecture-cm-higher}
Let $k\in 2\Z_{\geq2}$ and let $1\leq r\leq k-1$. Let $D<-4$ be a discriminant with class number $1$, $C/\Q$ be an elliptic curve with $j(C)=j(\alpha_D)$, and $\lambda$ be a relation for $S_k$. Let $p$ be a prime with $(\frac{D}{p})=1$ and $v_p(j(\alpha_D))=0=v_p(j(\alpha_D)-1728)$. Then, for all $n,l\in\Z^+$,
\begin{align*}
    a_{np^l}(G_{k,D}^{(r)}|\lambda)\;\equiv\; p^{r-1}u_p(C)^{k-2r}a_{np^{l-1}}(G_{k,D}^{(r)}|\lambda)\pmod{p^{(k-1)l}},
\end{align*}
where $u_p(C)$ is the root of $X^2-a_p(C)X+p$ which is a $p$-adic unit.
\end{conjecture}

\begin{conjecture}
\label{p-adic-gamma-supersingular-conjecture-higher}
Let $k\in 2\Z_{\geq2}$, $r\in\Z$ with $1\leq r\leq k-1$, and $\lambda$ be a relation for $S_k$. Let $D<0$ be a discriminant with class number $1$ and write $D=A^2D_0$ for some $A\in\Z$ and some fundamental discriminant $D_0<0$. Suppose that $D\notin\{-3,-4,-12\}$. Let $p$ be a prime with $(\frac{D}{p})=-1$ and $v_p(j(\alpha_D))=0=v_p(j(\alpha_D)-1728)$. Then, for all $n,l\in\Z^+$,
\begin{align*}
    (r-1)!\cdot a_{np^l}(G_{k,D}^{(k-r)}|\lambda)\;\equiv\; (k-1-r)!\thin c_D^{\frac{k}{2}-r}\cdot(-p)^{r-1} \Omega_{p,D_0}^{-(\frac{k}{2}-r)}a_{np^{l-1}}(G_{k,D}^{(r)}|\lambda)\pmod{p^{(k-1)l}},
\end{align*}
where $c_D$ is the same as in \Cref{p-adic-gamma-supersingular-conjecture} (see also \Cref{c_D-value}).
\end{conjecture}

\begin{conjecture}
\label{cm-magnetic-conjecture-higher}
Let $k\in 2\Z_{\geq2}$, $r\in\Z$ with $1\leq r\leq k-1$, and $r'=\min(r,k-r)$. Let $D<-4$ be a discriminant with class number $1$ and $\lambda$ be a relation for $S_k$. Let $A=A_{k,D}^{(r)}$ be a nonzero integer such that $\wt{G}_{k,D}^{(r)}=AG_{k,D}^{(r)}$ has integer coefficients with no common divisors. Then, $\wt{G}_{k,D}^{(r)}|\lambda$ is $(r'-1)$-magnetic, i.e., for all $n\in\Z^+$,
\begin{align*}
    n^{r'-1}\thin\big|\thin a_n(\wt{G}_{k,D}^{(r)}|\lambda).
\end{align*}
\end{conjecture}

For the middle element $G_{k,D}^{(k/2)}$ in the family, we also have a different and presumably better normalization, which yields a generalization of \Cref{cm-middle-supercongruence-intro}. For $k\in 2\Z_{\geq2}$, let
\begin{align}
    \cl{G}_{k,D}(z)\deq\frac{2}{w_D}\cdot(\partial^{\frac{k-2}{2}}_\tau G_k)(z,\alpha_D).
\end{align}

\begin{theorem}
\label{cm-middle-supercongruence-higher}
Let $k\in 2\Z_{\geq2}$ and $\lambda$ be a relation for $S_k$. Let $D<0$ be a discriminant of class number $1$ and write $D=A^2D_0$ for  some $A\in\Z$ and some fundamental discriminant $D_0<0$. Let
\begin{align*}
    \wt{\cl{G}}_{k,D}\eq\begin{cases}
            |D_0|^{-\frac{k}{4}}\cl{G}_{k,D}&\text{ if }k\equiv0\pmod{4},\\
            |D_0|^{\frac{k-2}{4}}\cl{G}_{k,D}&\text{ if }k\equiv 2\pmod{4}.
    \end{cases}
\end{align*}
Then, $\wt{\cl{G}}_{k,D}$ satisfies the following.
\begin{enumerate}[label=\rm{(\arabic*)}]
    \item $\wt{\cl{G}}_{k,D}\in\Z[[q]]$, and $\wt{\cl{G}}_{k,D}|\lambda$ is $\frac{k-2}{2}$-magnetic, i.e., for all $n\in\Z^+$,
    \begin{align*}
        n^{\frac{k-2}{2}}\thin\big|\thin a_n(\wt{\cl{G}}_{k,D}|\lambda).
    \end{align*}
    \item Let $p$ be a prime with $p\nmid A$. Then, for all $n,l\in\Z^+$,
    \begin{align*}
    a_{np^l}(\wt{\cl{G}}_{k,D}|\lambda)\;\equiv\;\big(\big(\tfrac{D}{p}\big)p\big)^{\frac{k-2}{2}}a_{np^{l-1}}(\wt{\cl{G}}_{k,D}|\lambda)\pmod{p^{(k-1)l}}.
    \end{align*}
\end{enumerate}
\end{theorem}

We now end this section with some numerical examples illustrating some of the phenomena described above.

\begin{example}
\label{1-magnetic-higher-example}
Let $k\in 2\Z_{\geq2}$. Let $D<-4$ be a discriminant of class number $1$ and write $j_D=j(\alpha_D)$ and $\psi_D=\psi(\alpha_D)$ as in \Cref{cm-example-section}. Now, write $g_{2-k}=E_4^aE_6^b\Delta^{\delta'}$ for some $a,b,\delta'\in\Z$. Then,
\begin{align*}
    G_{k,D}^{(2)}\eq\frac{g_k}{j-j_D}\cdot\left(\tfrac{b}{2}\cdot\tfrac{j_D}{j_D-1728}+\tfrac{a}{3}+\tfrac{k-2}{12}\cdot \psi_D\right)+\frac{g_k}{(j-j_D)^2}\cdot j_D.
\end{align*}
Fix a relation $\lambda$ for $S_k$. Then, \Cref{cm-magnetic-conjecture-higher} predicts that a suitable multiple of $G_{k,D}^{(2)}|\lambda$ is $1$-magnetic.

For the supercongruence predicted by \Cref{ordinary-prime-conjecture-cm-higher}, we will write it in the form of \Cref{ordinary-prime-conjecture-cm-discriminant}. Let $p$ be a prime with $(\frac{D}{p})=1$ and $v_p(j_D)=0=v_p(j_D-1728)$. Write $p=\pi\ov{\pi}$ for some $\pi\in\cl{O}_D$. Then, \Cref{ordinary-prime-conjecture-cm-higher} predicts that for all $n,l\in\Z^+$,
\begin{align*}
    a_{np^l}(G_{k,D}^{(2)}|\lambda)\;\equiv\; p\thin\pi^{k-4}\thin a_{np^{l-1}}(G_{k,D}^{(2)}|\lambda)\pmod{\ov{\pi}^{(k-1)l}}.
\end{align*}
\end{example}

\begin{example}
Let $k=12$ so that $a=2$ and $b=1$ in \Cref{1-magnetic-higher-example}, and let $D=-11$ so that $j_D=-32^3$ and $\psi_D=32/77$. Then,
\begin{align*}
    G_{12,-11}^{(2)}\eq \frac{802}{539}\cdot\frac{\Delta}{j+32^3}-32^3\cdot\frac{\Delta}{(j+32^3)^2}
\end{align*}
by \Cref{1-magnetic-higher-example}. Note that primes that divide $D$, $j_D$, or $j_D-1728$ are exactly $2,7,11$.

Consider $p=5$ so that $(\frac{-11}{p})=1$ and fix a relation $\lambda$ for $S_{12}$. We will write the supercongruence explicitly using $\Omega_{p,D}$. In this case,
\begin{align*}
    \Omega_{5,-11}\eq 4 + 4\cdot 5 + 5^4 + 2\cdot 5^6 + 5^9 + 4\cdot 5^{10} + O(5^{11})\;\equiv\;41047524\pmod{5^{11}}.
\end{align*}
Thus, for all $n\in\Z^+$, we have numerically that
\begin{align*}
    a_{5n}(G_{12,-11}^{(2)}|\lambda)\;\equiv\; 5\cdot\Omega_{5,-11}^4 \cdot a_n(G_{12,-11}^{(2)}|\lambda)\;\equiv\;46880755\thin a_n(G_{12,-11}^{(2)}|\lambda)\pmod{5^{11}}.
\end{align*}

Now, consider explicitly $\lambda=\lambda_5=(-4830,0,0,0,1,0,\ldots)$ as defined in \Cref{k=12-simple-pole-example}, which may also illustrate the independence between the relation $\lambda$ and the prime $p$. In this case,
\begin{align*}
    G_{12,-11}^{(2)}|\lambda_5\eq G_{12,-11}^{(2)}|(T_{5,12}-4830)\;\equiv\; G_{12,-11}^{(2)}|(U_5-4830)\pmod{5^{11}}.
\end{align*}
Unfolding the congruence above in this case, one obtains that
\begin{align*}
    a_{25n}(G_{12,-11}^{(2)})-46885585\thin a_{5n}(G_{12,-11}^{(2)})+18031025\thin a_n(G_{12,-11}^{(2)})\;\equiv\;0\pmod{5^{11}}.
\end{align*}
\end{example}

\begin{example}
\label{magnetic-theorem-example}
We give some examples of magnetic modular forms deduced from \Cref{cm-middle-supercongruence-higher}.

For $k=4$, one obtains $1$-magnetic modular forms that are integer multiples of examples in \cite[Table~1]{pasol-zudilin}. To give two new examples not documented in that table, the case when $D=-27$ yields that
\begin{align*}
    64E_4\bigg(\frac{60083}{j+12288000}+\frac{64009\cdot(-12288000)}{(j+12288000)^2}\bigg)
\end{align*}
is $1$-magnetic, and the case when $D=-24$ yields that
\begin{align*}
    48E_4\cdot\frac{3923j^3 + 1271674944j^2 + 33025786269696j - 31712851087589376}{(j^2 - 4834944j + 14670139392)^2}
\end{align*}
is $1$-magnetic (here the denominator is the square of the Hilbert class polynomial associated to $-24$). Indeed, numerical computations suggest that both examples are already $1$-magnetic without the pre-factors $64$ and $48$.

For higher weights, the expressions are already quite complicated. We will thus only provide the examples when $D=-3,-4$ and $k=6,8,10,14$.

For $k=6$, we obtain that
\begin{align*}
    384\cdot\frac{E_6}{j}\hspace{0.5cm}\text{ and }\hspace{0.5cm}96E_6\bigg(\frac{13}{j-1728}+\frac{31104}{(j-1728)^2}\bigg)
\end{align*}
are $2$-magnetic.

For $k=8$, we obtain that
\begin{align*}
    8E_8\bigg(\frac{1}{j}-\frac{3072}{j^2}\bigg)\hspace{0.5cm}\text{ and }\hspace{0.5cm}12E_8\bigg(\frac{1}{j-1728}+\frac{5832}{(j-1728)^2}\bigg)
\end{align*}
are $3$-magnetic.

For $k=10$, we obtain that
\begin{align*}
    24E_{10}\bigg(\frac{13}{j}-\frac{110592}{j^2}\bigg)\hspace{0.5cm}\text{ and }\hspace{0.5cm}96E_{10}\bigg(\frac{17}{j-1728}+\frac{324864}{(j-1728)^2}+\frac{644972544}{(j-1728)^3}\bigg)
\end{align*}
are $4$-magnetic.

For $k=14$, we obtain that
{\scriptsize
\begin{align*}
    144E_{14}\bigg(\frac{1}{j}-\frac{89280}{j^2}+\frac{318504960}{j^3}\bigg)\quad \text{ and }\quad 96E_{14}\bigg(\frac{13}{j-1728}+\frac{2223744}{(j-1728)^2}+\frac{19707494400}{(j-1728)^3}+\frac{33435376680960}{(j-1728)^4}\bigg)
\end{align*}}%
are $6$-magnetic.

In all the cases, numerical computations suggest that all the modular forms are already magnetic without the corresponding pre-factors.
\end{example}

\section{Notations and preliminaries for the proofs}
\label{notation-section}

In the last two sections of this paper, we will demonstrate the proofs of \Cref{good-prime-theorem-higher} and \Cref{cm-middle-supercongruence-higher}. For convenience and for the coherence of the proof, we define some notations and recall some preliminaries in this section.

\subsection{Modular forms of integral weight}

Fix $k\in\Z$. Let $M_k$ denote the space of holomorphic modular forms of weight $k$ and level $1$, $S_k$ denote the space of holomorphic cusp forms of weight~$k$ and level $1$, and $M_k^!$ denote the space of weakly holomorphic modular forms of weight $k$ and level $1$.

Now, write $k=k'+12\delta$ for some $k'\in\{0,4,6,8,10,14\}$ and $\delta\in\Z$. For each $n\in\Z$ with $n\geq -\delta$, there exists a unique weakly holomorphic modular form $g_{k,n}$ of weight $k$ and level $1$ satisfying that
\begin{align*}
    g_{k,n}\eq q^{-n}+O(q^{\delta+1})\in\Z[[q,q^{-1}]].
\end{align*}
It is easy to check that $g_{k,-\delta}=g_k$ and that $\{g_{k,n}\}_{n\geq-\delta}$ forms a basis of $M_k^!$.

\subsection{Modular forms of half-integral weight}
Fix $s\in\Z$. Let $M_{s+1/2}^{!,+}$ denote the Kohnen plus space, i.e., the space of weakly holomorphic modular forms of weight $s+1/2$ and level $\Gamma_0(4)$ whose $n$-th Fourier coefficient is zero if $(-1)^sn\not\equiv0,1\pmod{4}$. Let $S_{s+1/2}^+$ denote the Kohnen plus cuspidal subspace.

Now, write $2s=k'+12\delta$ for some $k'\in\{0,4,6,8,10,14\}$ and $\delta\in\Z$. Let
\begin{align*}
    A\deq\begin{cases}
        2\delta-(-1)^s &\text{ if $\delta$ is odd,} \\
        2\delta &\text{ if $\delta$ is even.}
    \end{cases}
\end{align*}
Then, for each $m\in\Z$ with $m\geq -A$ and $(-1)^{s-1}m\equiv0,1\pmod{4}$, there exists a unique $f_{s+1/2,m}\in M_{s+1/2}^{!,+}$ such that
\begin{align*}
    f_{s+1/2,m}\eq q^{-m}+O(q^{A+1})\in\Z[[q,q^{-1}]].
\end{align*}
In particular, $\{f_{s+1/2,m}\}_m$ forms a basis of $M_{s+1/2}^{!,+}$ (see \cite[Section~2~and~Appendix]{duke-jenkins-basis} for a discussion of this fact).

\subsection{Hecke operators (on $q$-series)}

Let $f=\sum_{n\gg-\infty}a_n(f)q^n$ be a $q$-series. Let $k\in\Z$ and $p$ be a rational prime. Define the Hecke operators (on $q$-series) as
\begin{align*}
    f|U_p\deq\sum_{n\gg-\infty} a_{np}(f)q^n,\quad f|V_p\deq\sum_{n\gg-\infty} a_n(f)q^{np},\quad f|T_{p,k}\deq f|(U_p+p^{k-1}V_p).
\end{align*}
More generally, for $m\in\Z^+$, define 
\begin{align*}
    f|T_{m,k}\deq\sum_{n\gg-\infty}\bigg(\sum_{d>0,d|(m,n)}d^{k-1}a_{mn/d^2}(f)\bigg)q^n
\end{align*}
It is easy to check that $T_{m,k}$ and $T_{n,k}$ commute for any $m,n\in\Z^+$ and $k\in\Z$.

For a map $\chi:\Z\rightarrow\C$ (usually a Dirichlet character), define
\begin{align*}
    f|\chi\deq\sum_{n\gg-\infty}\chi(n)a_n(f)q^n.
\end{align*}
Now, let $s\in\Z$ and let
\begin{align*}
    f\eq\sum_{(-1)^sn\equiv0,1\;(\mathrm{mod}\;4)}a_n(f)q^n\in M_{s+1/2}^{+,!}.
\end{align*}
For a rational prime $p$, define
\begin{align*}
    f|T_{p,s+1/2}\deq\sum_{(-1)^sn\equiv0,1\;(\mathrm{mod}\;4)}\big(a_{np^2}(f)+p^{s-1}\big(\tfrac{(-1)^sn}{p}\big)a_n(f)+p^{2s-1}a_{n/p^2}(f)\big)q^n.
\end{align*}
Indeed, if $p$ is odd, then
\begin{align*}
    f|T_{p,s+1/2}\eq f|(U_p^2+p^{s-1}\chi_p'+p^{2s-1}V_p^2),
\end{align*}
where $\chi_p'(n)=\big(\frac{(-1)^sn}{p}\big)$.

\subsection{Negative discriminants}

Let $D<0$ be a discriminant, $\cl{O}_D$ be the imaginary quadratic order of discriminant $D$, and
\begin{align*}
    w_D\deq|\cl{O}_D^\times|\eq\begin{cases}
        4&\text{ if $D=-4$,} \\
        6&\text{ if $D=-3$,} \\
        2&\text{ otherwise.}
    \end{cases}
\end{align*}
Let $\cl{Q}_D$ be a complete set of $\SL_2(\Z)$-inequivalent positive definite quadratic forms $Q(X,Y)\in\Z[X,Y]$ of discriminant $D$ and let $\cl{Q}_D^\prim\subseteq\cl{Q}_D$ be the subset consisting of primitive elements, i.e., $aX^2+bXY+cY^2$ with $(a,b,c)=1$. For $Q(X,Y)=aX^2+bXY+cY^2\in\cl{Q}_D$, define
\begin{align*}
    \alpha_Q\deq\frac{-b+\sqrt{D}}{2a}\in\clh
\end{align*}
and
\begin{align*}
    w_Q\deq\begin{cases}
        2&\text{ if $Q$ is $\SL_2(\Z)$-equivalent to $a(X^2+Y^2)$,} \\
        3&\text{ if $Q$ is $\SL_2(\Z)$-equivalent to $a(X^2+XY+Y^2)$,} \\
        1&\text{ otherwise.}
    \end{cases}
\end{align*}

\section{Proof of \Cref{hypergeom-theorem-intro} and \Cref{good-prime-theorem}}
\label{good-prime-proof-section}

We will now prove \Cref{good-prime-theorem-higher}. As we will work with general number fields, we first introduce some notations.

Let $L$ be a number field and $\cl{O}_L$ be its ring of integers. For a prime $\fr{p}$ of $L$, let $\F_\fr{p}$ denote its residue field and let $\N(\fr{p}):=|\F_\fr{p}|$ denote its norm. For an elliptic curve $C/L$ and a good prime $\fr{p}$ of $C$, define
\begin{align*}
    a_\fr{p}\deq\N(\fr{p})+1-|C(\F_\fr{p})|.
\end{align*}

Now, for a holomorphic modular form $g\in\cl{O}_L[[q]]$ of weight $k$ and level $1$ and an elliptic curve $C/L$, write
\begin{align*}
    F_{g,C}\deq\frac{g}{j-j(C)}.
\end{align*}
We also adopt the notation $g_k$ in \Cref{other-weight-section}, and for $k\in 2\Z_{\geq2}$, write
\begin{align*}
    F_{k,C}\deq F_{g_k,C}\eq\frac{g_k}{j-j(C)}.
\end{align*}
The main theorem that we will prove in this section is the following.

\begin{theorem}
\label{good-prime-theorem-number-field}
Let $k\in 2\Z_{\geq2}$, $L$ be a number field, $g\in\cl{O}_L[[q]]$ be a holomorphic modular form of weight $k$ and level $1$, and $C/L$ be an elliptic curve. Let $\lambda$ be a relation for $S_k$ and let $\fr{p}$ be a good prime of $C$ with $\fr{p}\nmid 6$ and $v_\fr{p}(j(C))=0=v_\fr{p}(j(C)-1728)$. Then, for all $n\in\Z^+$,
\begin{align*}
    a_{n\cdot\N(\fr{p})}(F_{g,C}|\lambda)\;\equiv\;a_\fr{p}(C)^{k-2}a_n(F_{g,C}|\lambda)\pmod{\fr{p}}.
\end{align*}
\end{theorem}
\begin{remark}
By \Cref{one-choice-for-k}, it suffices to prove this for $g=g_k$, i.e., for $F_{k,C}$. Indeed, there exist some $\beta\in\cl{O}_L$ such that $F_{g,C}-\beta F_{k,C}\in S_k$. Hence, $F_{g,C}|\lambda=\beta\cdot F_{k,C}|\lambda$.
\end{remark}

As mentioned in \Cref{good-prime-hypergeom-remark}, the proof depends on the hypergeometric interpretation of the coefficients in the case when $k=4$. In particular, we will show the following generalization of \Cref{hypergeom-theorem-intro}.

\begin{theorem}
\label{k=4-hypergeom-theorem}
Let $p$ be a rational prime with $p\nmid 6$ and let $c\in\ov{\Z_p}$ with $v_p(c)=0$. Then, for all $l\in\Z^+$,
\begin{align*}
    a_{p^l}\bigg(\frac{E_4}{j-c}\bigg)&\;\equiv\;\big(c(c-1728)\big)^{\frac{p^l-1}{2}}\cdot\sum_{m=0}^{p^l-1}\frac{(6m)!}{(m!)^3(3m)!}\thin c^{-m} \pmod{p}.
\end{align*}
\end{theorem}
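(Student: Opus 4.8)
The plan is to turn the Fourier coefficient into a residue in the $j$-coordinate, use the Frobenius structure of the $p$-power map to collapse that residue to a single Taylor coefficient modulo $p$, and then recognise the hypergeometric sum. Throughout, $p\nmid 6$ guarantees that $1728$ is a $p$-adic unit, and everything is manipulated formally at the level of $q$- and $t$-series (so the half-integral weight objects below are honest $q$-series, with no modularity obstruction). Recall $\cld j\eq -E_4^2E_6/\Delta$ with $\cld\eq q\frac{d}{dq}$. Writing $a_n(F_{4,c})\eq\Res_{q=0}\big[F_{4,c}\thin q^{-n}\tfrac{dq}{q}\big]$, substituting $\tfrac{dq}{q}\eq\tfrac{dj}{\cld j}$, and passing to the local coordinate $t\deq 1728/j$ at the cusp --- so that $q\eq q(t)\eq\tfrac{t}{1728}(1+O(t))\in\Z_{(p)}[[t]]$ and $\tfrac{1728}{t}\eq j$ --- a direct computation using $\tfrac{\Delta}{E_4E_6}\cdot j\eq\tfrac{E_4^2}{E_6}$ gives
\[
a_n(F_{4,c})\eq\Res_{t=0}\!\left[\frac{(E_4^2/E_6)(t)\thin q(t)^{-n}}{1728-ct}\,dt\right],
\]
where $(E_4^2/E_6)(t)\in\Z_{(p)}[[t]]$ is the $q$-series $E_4^2/E_6$ re-expanded in $t$. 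The first task is to set up this change of variables and its $p$-integrality carefully.

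Next I specialise $n=p^l$ and reduce modulo $p$. Writing $q(t)\eq\tfrac{t}{1728}u(t)$ with $u\in\Z_{(p)}[[t]]$, $u(0)=1$, the Frobenius congruence gives $u(t)^{-p^l}\equiv u(t^{p^l})^{-1}\spmod p$, so that, modulo $p$, $q(t)^{-p^l}$ equals $1728^{p^l}t^{-p^l}$ times a power series in $t^{p^l}$. Since $R(t)\deq\tfrac{(E_4^2/E_6)(t)}{1728-ct}$ lies in $\ov{\Z_p}[[t]]$, only the bottom term $1728^{p^l}t^{-p^l}$ can contribute to the residue, and I obtain the uniform reduction
\[
a_{p^l}(F_{4,c})\;\equiv\;1728^{p^l}\,[t^{p^l-1}]\,R(t)\spmod p.
\]
This holds for every $l$ at once, which is the key structural simplification.

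It remains to match $[t^{p^l-1}]R(t)$ with the truncated hypergeometric sum. Here I use the classical $q$-series identity $E_4^{1/4}\eq{}_2F_1\!\left(\tfrac{1}{12},\tfrac{5}{12};1;t\right)$ (with $t=1728/j$); squaring it and applying Clausen's formula gives $E_4^{1/2}\eq H(t)$, where $H(t)\deq{}_3F_2\!\left(\tfrac16,\tfrac56,\tfrac12;1,1;t\right)\eq\sum_{m\ge0}h_m t^m$ with $h_m\eq\frac{(6m)!}{(m!)^3(3m)!}1728^{-m}$, while $E_6^2\eq E_4^3(1-t)$ gives $E_4^2/E_6\eq H(t)(1-t)^{-1/2}$. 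Setting $w\deq 1728/c$ (a $p$-adic unit, since $v_p(c)=0$) and unwinding the constants via $1728^{p^l-1}\equiv1$ and $c^{\,p^l-1}\equiv w^{-(p^l-1)}\spmod p$, the claim of the theorem reduces to the polynomial congruence
\[
\Trun_N\!\left[(1-w)^{-1/2}H(w)\right]\;\equiv\;(1-w)^{N/2}\,\Trun_N\!\left[H(w)\right]\spmod p,
\]
where $N\eq p^l-1$ and $\Trun_N$ denotes truncation to degree $N$ in $w$.

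Finally, I would establish this congruence from the $p$-adic Lucas theorem $\binom{\alpha}{k}\equiv\prod_s\binom{\alpha_s}{k_s}\spmod p$ for $\alpha\in\Z_p$ with base-$p$ digits $\alpha_s$ and $k=\sum_s k_sp^s$. Two inputs suffice. First, $-\tfrac{1}{2}$ and $\tfrac{p^l-1}{2}$ share their first $l$ base-$p$ digits (all equal to $\tfrac{p-1}{2}$), whence $\binom{-1/2}{k}\equiv\binom{(p^l-1)/2}{k}\spmod p$ for $0\le k\le N$, i.e.\ $(1-w)^{-1/2}$ and $(1-w)^{N/2}$ agree through degree $N$. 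Second, writing $h_i\eq(-1)^i\binom{-1/6}{i}\binom{-5/6}{i}\binom{-1/2}{i}$, the factor $\binom{-1/2}{i}$ vanishes modulo $p$ whenever $i$ has a base-$p$ digit exceeding $\tfrac{p-1}{2}$, and every $i$ with $\tfrac{p^l-1}{2}<i\le p^l-1$ does; hence $h_i\equiv0$ there, annihilating the cross terms of degree $>N$. Combining the two inputs collapses both sides to $(1-w)^{N/2}\Trun_N[H(w)]$, and passing back to $c=1728/w$ recovers the factor $(c(c-1728))^{(p^l-1)/2}$ together with $\sum_{m=0}^{p^l-1}\frac{(6m)!}{(m!)^3(3m)!}c^{-m}$. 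I expect the main obstacle to lie not in this closing combinatorics, which is clean once Lucas is available, but in the rigorous execution of the first two steps: the residue and change-of-variables identity, the $p$-integrality of $q(t)$ and of $(E_4^2/E_6)(t)$ that legitimises the Frobenius collapse, and the precise normalisation and branch of the half-integral-weight identities $E_4^{1/4}={}_2F_1(\cdots)$ and $E_6=E_4^{3/2}\sqrt{1-t}$.
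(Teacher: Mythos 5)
Your proposal is correct, and its first half takes a genuinely different route from the paper before the two arguments converge. The paper never touches a residue: it exploits the duality $\frac{E_4}{j-X}=\sum_n P_{4,n}(X)q^n$ together with the Asai--Kaneko--Ninomiya/Duke--Jenkins identity $P_{4,n}(j)=n^{3}\,(E_{10}/\Delta)|T_{n,-2}\,\big/\,(E_{10}/\Delta)$, and then runs the Hecke recursion $T_{np,-2}=T_{n,-2}T_{p,-2}-p^{3}T_{n/p,-2}$ to get $P_{4,p^l}(j)\equiv(E_{10}/\Delta)^{p^l-1}\pmod p$ as $q$-series. Your residue identity $a_n(F_{4,c})=\Res_{t=0}\bigl[(E_4^2/E_6)(t)\,q(t)^{-n}(1728-ct)^{-1}dt\bigr]$ plus the Frobenius collapse of $q(t)^{-p^l}$ reaches exactly the same intermediate congruence (note $j^{-1}E_{10}/\Delta=E_6/E_4^2$, so the paper's $j^{1-p^l}(E_{10}/\Delta)^{p^l-1}$ is your $(E_4^2/E_6)(t)$ after its own Frobenius step), but does so without invoking Zagier duality or Hecke operators on the weight $-2$ side --- a more elementary and self-contained derivation, at the cost of losing the multiplicative refinement $a_{np}(F_{k,c})\equiv a_p(F_{k,c})a_n(F_{k,c})^p$ that the paper extracts from the same machinery and needs for \Cref{good-prime-theorem}. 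From that point on the two endgames agree: both use Fricke--Klein and Clausen to identify $E_4^{1/2}$ with the ${}_3F_2$, and both must justify truncating at degree $p^l-1$. Here you keep the exponent $-\tfrac12$ on $(1-w)$ and convert it to $\tfrac{p^l-1}{2}$ via Lucas, then kill the overshooting cross terms using $\binom{-1/2}{i}\equiv0$ for $\tfrac{p^l-1}{2}<i\le p^l-1$; the paper instead carries the exponent $\tfrac{p^l-1}{2}$ from the start and uses the (stronger, slightly more delicate) vanishing of the hypergeometric coefficients for $r>\tfrac{p^l-1}{6}$. Your Lucas-based version is arguably cleaner since it only needs the $\binom{-1/2}{i}$ factor. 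The points you flag as requiring care (the $p$-integrality of $q(t)$ and of $(E_4^2/E_6)(t)$, and the normalisation of the square roots as power series with constant term $1$) are all routine and check out.
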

 
\subsection{The two variable function}

From now on, we fix $k\in 2\Z_{\geq2}$, and write $k=k'+12\delta$ for some $k'\in\{0,4,6,8,10,14\}$ and $\delta\in\Z$. Recall that
\begin{align*}
    G_k(z,\tau)\eq\frac{g_k(z)\cdot g_{2-k}(\tau)}{j(z)-j(\tau)}.
\end{align*}
Let $q_z=e^{2\pi i z}$ and $q_\tau=e^{2\pi i\tau}$. Then, we have the following result by Duke--Jenkins, from which they deduced Zagier's duality. 

\begin{theorem}[{\cite[Theorem~2]{duke-jenkins-zagierduality}}]
For any even integer $k\in\Z^+$,
\begin{align}
\label{G-k-expansion}
    \sum_{n=\delta+1}^\infty g_{2-k,n}(\tau)q_z^n\eq G_k(z,\tau)\eq-\sum_{n=-\delta}^\infty g_{k,n}(z)q_\tau^n.
\end{align}
\end{theorem}

Fix a rational prime $p$ and let $l\in\Z^+$. It is easy to check from the definitions that
\begin{align*}
    T_{p^l,k}\eq\sum_{a=0}^l p^{a(k-1)}U_p^{l-a}V_p^a.
\end{align*}
Now, fix $n\in\Z$ with $n\geq -\delta$ and consider $g_{k,n}(z)|T_{p^l,k}$. If $n\geq 0$, then we have
\begin{align*}
    g_{k,n}(z)\big|T_{p^l,k}\eq \sum_{a=0}^l p^{a(k-1)}q_z^{-np^{2a-l}} +O(q^{\delta+1}),
\end{align*}
and it follows that
\begin{align}
\label{g-k-n-under-hecke-greater-0}
    g_{k,n}(z)\big|T_{p^l,k}-\sum_{a=0}^l p^{a(k-1)}g_{k,np^{2a-l}}(z)\in S_k,
\end{align}
where we abuse the notations by setting that $q_z^\alpha:=0$ if $\alpha\notin\Z$, and that $g_{k,\alpha}:=0$ if $\alpha\notin\Z$, for convenience. If $-\delta\leq n<0$, then we have
\begin{align}
\label{g-k-n-under-hecke-less-0}
    g_{k,n}(z)\in S_k\quad\text{ and }\quad g_{k,n}(z)\big|T_{p^l,k}\in S_k.
\end{align}

Now, let $\lambda$ be a relation for $S_k$. In the following computations, the relation $\lambda$ and all the Hecke operators are viewed as acting on the variable $z$. We have
\begin{align*}
    G_k(z,\tau)\big|\lambda T_{p^l,k}\eq G_k(z,\tau)\big|T_{p^l,k}\thin\lambda 
    &\;\stackrel{(\ref{G-k-expansion})}{=}\; -\sum_{n=-\delta}^\infty \big(g_{k,n}(z)\big|T_{p^l,k}\thin\lambda\big) q_\tau^n \\
    &\;\stackrel{(\ref{g-k-n-under-hecke-less-0})}{=}\; -\sum_{n=0}^\infty \big(g_{k,n}(z)\big|T_{p^l,k}\thin\lambda\big) q_\tau^n \\
    &\;\stackrel{(\ref{g-k-n-under-hecke-greater-0})}{=}\; -\sum_{n=0}^\infty\bigg(\sum_{a=0}^l p^{a(k-1)}g_{k,np^{2a-l}}(z)\big|\lambda\bigg) q_\tau^n \\
    &\;\;\equiv\;-\sum_{n=0}^\infty \big(g_{k,np^{-s}}(z)\big|\lambda\big) q_\tau^n \\
    &\;\stackrel{(\ref{g-k-n-under-hecke-less-0})}{\equiv}\;-\sum_{n=-\delta}^\infty \big(g_{k,n}(z)\big|\lambda\big) q_\tau^{np^l}\;\stackrel{(\ref{G-k-expansion})}{\equiv}\; G_k(z,p^l\tau)\big|\lambda \pmod{p^{k-1}}
\end{align*}
as a double Fourier series in terms of $z$ and $\tau$. Also,
\begin{align*}
    G_k(z,p^l\tau)\eq \frac{g_k(z)\cdot g_{2-k}(p^l\tau)}{j(z)-j(p^l\tau)}\;\equiv\; \frac{g_k(z)\cdot g_{2-k}(\tau)^{p^l}}{j(z)-j(\tau)^{p^l}}\pmod{p}
\end{align*}
as a double Fourier series in terms of $z$ and $\tau$. We thus obtain the following.

\begin{proposition}
\label{U_p-action-on-G_k}
For a rational prime $p$, some $l\in\Z^+$, and a relation $\lambda$ for $S_k$, we have
\begin{align*}
    \frac{g_k(z)}{j(z)-j(\tau)}\bigg|\lambda U_{p^l}\;\equiv\; g_{2-k}(\tau)^{p^l-1}\cdot\frac{g_k(z)}{j(z)-j(\tau)^{p^l}}\bigg|\lambda\pmod{p}
\end{align*}
as a double Fourier series in terms of $z$ and $\tau$.
\end{proposition}

\subsection{Reducing to the case of weight $4$}

We will now prove that all the cases follow from the single case when $k=4$ and $\lambda=(1,0,0,\ldots)$.

Write
\begin{align*}
    \frac{g_k}{j-X}\bigg|\lambda\eq\sum_{n=1}^\infty P_{k,n,\lambda}(X)q^n
\end{align*}
and in particular,
\begin{align*}
    \frac{E_4}{j-X}\eq\sum_{n=1}^\infty P_{4,n}(X)q^n,
\end{align*}
for some polynomials $P_{k,n,\lambda},P_{4,n}\in\Z[X]$.

\begin{lemma}
\label{k=4-pth-coefficient}
For a rational prime $p$ and $l\in\Z^+$,
\begin{align*}
    P_{4,p^l}(j(\tau))\;\equiv\;(g_{-2}(\tau))^{p^l-1}\pmod{p}
\end{align*}
as $q_\tau$-series.
\end{lemma}
\begin{proof}
This follows from specializing \Cref{U_p-action-on-G_k} at $k=4$ and $\lambda=(1,0,0,\ldots)$.
\end{proof}

Now, by analyzing the potential poles and zeros, it is easy to check that there always exist $a,b\in\Z_{\geq0}$ such that
\begin{align*}
    \frac{g_{-2}(\tau)^{\frac{k-2}{2}}}{g_{2-k}(\tau)}\eq j(\tau)^a(j(\tau)-1728)^b.
\end{align*}
Then, \Cref{U_p-action-on-G_k} and \Cref{k=4-pth-coefficient} together imply that
\begin{align*}
    \bigg(\frac{g_{-2}(\tau)^{\frac{k-2}{2}}}{g_{2-k}(\tau)}\bigg)^{p^l-1}\cdot\frac{g_k(z)}{j(z)-j(\tau)}\bigg|\lambda U_{p^l}\;\equiv\; P_{4,p^l}(j(\tau))^{\frac{k-2}{2}}\cdot\frac{g_k(z)}{j(z)-j(\tau)^{p^l}}\bigg|\lambda\pmod{p}
\end{align*}
as a double Fourier series in terms of $z$ and $\tau$. In particular, taking the $n$-th coefficient of $q_z$ on both sides, one obtains that
\begin{align}
\label{G_k-and-G_4-equation}
    \big(j(\tau)^a(j(\tau)-1728)^b\big)^{p^l-1}P_{k,np^l,\lambda}(j(\tau))\;\equiv\; P_{4,p^l}(j(\tau))^{\frac{k-2}{2}}P_{k,n,\lambda}(j(\tau)^{p^l})\pmod{p}
\end{align}
as $q_\tau$-series, and hence also as polynomials in $j(\tau)$.

Now, let $L$ be a number field, $\fr{p}$ be a prime of $L$, and $c\in L$ with $v_\fr{p}(c)=0=v_\fr{p}(c-1728)$. Let $p$ be a rational prime lying under $\fr{p}$ and write $\N(\fr{p})=p^l$. Then, plugging in $j(\tau)=c$ in (\ref{G_k-and-G_4-equation}) yields that
\begin{align*}
    P_{k,np^l,\lambda}(c)\;\equiv\;P_{4,p^l}(c)^{\frac{k-2}{2}}P_{k,n,\lambda}(c)\pmod{\fr{p}}.
\end{align*}
In particular, we obtain the following.

\begin{lemma}
Let $k\in 2\Z_{\geq2}$, $L$ be a number field, and $C/L$ be an elliptic curve. Let $\lambda$ be a relation for $S_k$ and let $\fr{p}$ be a prime with $v_\fr{p}(j(C))=0=v_\fr{p}(j(C)-1728)$. Then, for all $n\in\Z^+$,
\begin{align*}
    a_{n\cdot\N(\fr{p})}(F_{k,C}|\lambda)\;\equiv\; a_{\N(\fr{p})}(F_{4,C})^{\frac{k-2}{2}}\thin a_n(F_{k,C}|\lambda)\pmod{\fr{p}}.
\end{align*}
\end{lemma}

From the lemma above, it suffices to show that
\begin{align*}
    a_{\N(\fr{p})}(F_{4,C})\;\equiv\;a_\fr{p}(C)^2\pmod{\fr{p}}
\end{align*}
in order to show \Cref{good-prime-theorem-higher}.

\subsection{Hypergeometric reinterpretation of $a_{p^l}(F_{4,C})$}
\label{hypergeometric-subsection}

Now, we specialize to $a_{p^l}(F_{4,C})$. Specifically, we will prove \Cref{k=4-hypergeom-theorem}. For this part, it is more convenient to forget the dependence on the elliptic curves and simply write
\begin{align*}
    F_{4,c}\deq\frac{E_4}{j-c},
\end{align*}
for $c\in\ov{\Q}$ or $\ov{\Z_p}$.

Recall that $a_{p^l}(F_{4,c})=P_{4,p^l}(c)$ and
\begin{align*}
    P_{4,p^l}(j)\;\equiv\;g_{-2}^{p^l-1}\;\equiv\;(E_{10}/\Delta)^{p^l-1}\pmod{p}
\end{align*}
as $q$-series. In order to prove \Cref{k=4-hypergeom-theorem}, we will use the hypergeometric intepretation of $E_4$.

We first recall the definition of hypergeometric functions. Given a hypergeometric datum $(\bo{\alpha},\bo{\beta})$ with
\begin{align*}
    \bo{\alpha}&\eq(\alpha_1,\ldots,\alpha_{n-1},\alpha_n)\in\C^n \\
    \bo{\beta}&\eq(\beta_1,\ldots,\beta_{n-1})\in\C^{n-1},
\end{align*}
the associated hypergeometric function is defined as
\begin{align*}
    \pFq{n}{n-1}{\bo{\alpha}}{\bo{\beta}}{z}\deq\sum_{m=0}^\infty\frac{(\alpha_1)_m(\alpha_2)_m\cdots(\alpha_n)_m}{(\beta_1)_m(\beta_2)_m\cdots(\beta_{n-1})_m}\cdot \frac{z^m}{m!},
\end{align*}
and for $r\in\Z_{\geq0}$, the truncated hypergeometric sum is defined as
\begin{align*}
    \pFq{n}{n-1}{\bo{\alpha}}{\bo{\beta}}{z}_{r}\deq\sum_{m=0}^r\frac{(\alpha_1)_m(\alpha_2)_m\cdots(\alpha_n)_m}{(\beta_1)_m(\beta_2)_m\cdots(\beta_{n-1})_m}\cdot \frac{z^m}{m!},
\end{align*}
where $(a)_m:=\prod_{i=0}^{m-1}(a+i)$.

Recall the following identity by Fricke and Klein \cite[Chapter~5, Section~10]{fricke-elliptic-function}
\begin{align*}
    E_4^{1/4}\eq \pFq{2}{1}{\frac{1}{12},\frac{5}{12}}{1}{1728j^{-1}}
\end{align*}
and Clausen's formula
\begin{align*}
    \pFq{2}{1}{\frac{1}{12},\frac{5}{12}}{1}{t}^2\eq\pFq{3}{2}{\frac{1}{2},\frac{1}{6},\frac{5}{6}}{1,1}{t}.
\end{align*}
It follows that
\begin{align}
\label{g_(-2)-formula}
    \frac{E_{10}}{\Delta}\eq\bigg(\frac{E_4^3}{\Delta}\cdot\frac{E_6^2}{\Delta}\bigg)^{1/2}\cdot E_4^{-1/2}\eq\big(j(j-1728)\big)^{1/2}\cdot\pFq{3}{2}{\frac{1}{2},\frac{1}{6},\frac{5}{6}}{1,1}{1728j^{-1}}^{-1}.
\end{align}

Now, let $p\nmid 6$ be a rational prime and let $l\in\Z^+$. Write
\begin{align*}
    Q_n(X)\deq X^{n-1}P_{4,n}(X^{-1}).
\end{align*}
Then,
\begin{align*}
    Q_{p^l}(j^{-1})\eq j^{1-p^l} P_{4,p^l}(j)&\;\equiv\;j^{1-p^l}(E_{10}/\Delta)^{p^l-1} \\
    &\;\stackrel{(\ref{g_(-2)-formula})}{=}\;(1-1728j^{-1})^{\frac{p^l-1}{2}}\cdot\pFq{3}{2}{\frac{1}{2},\frac{1}{6},\frac{5}{6}}{1,1}{1728j^{-1}}^{1-p^l}\pmod{p}
\end{align*}
as $q$-series. Let $\alpha=j^{-1}=q+O(q^2)$. Then, 
\begin{align*}
    Q_{p^l}(\alpha)\;\equiv\;(1-1728\alpha)^{\frac{p^l-1}{2}}\cdot\pFq{3}{2}{\frac{1}{2},\frac{1}{6},\frac{5}{6}}{1,1}{1728\alpha}^{1-p^l}\pmod{p}
\end{align*}
as power series in $\alpha$. Write $B(\alpha)=\smallpFq{3}{2}{\frac{1}{2},\frac{1}{6},\frac{5}{6}}{1,1}{1728\alpha}$ for convenience. Then,
\begin{align*}
    Q_{p^l}(\alpha)\;\equiv\;(1-1728\alpha)^{\frac{p^l-1}{2}}B(\alpha)^{1-p^l}\;\equiv\;(1-1728\alpha)^{\frac{p^l-1}{2}}B(\alpha)\cdot (B^{-1})(\alpha^{p^l})\pmod{p},
\end{align*}
where $B^{-1}$ is the inverse of $B$ as a power series. Since $\deg Q_{p^l}=\deg P_{4,p^l}\leq p^l-1$ and $B(0)=1$, this implies that
\begin{align*}
    Q_{p^l}(\alpha)\;\equiv\;\Trun_{p^l-1,\alpha}\bigg((1-1728\alpha)^{\frac{p^l-1}{2}}\cdot\pFq{3}{2}{\frac{1}{2},\frac{1}{6},\frac{5}{6}}{1,1}{1728\alpha}\bigg)\pmod{p}.
\end{align*}
Here for a power series $R(t)=\sum_{n=0}^\infty a_nt^n$, we write
\begin{align*}
    \Trun_m(R)(t)\deq\sum_{n=0}^m a_nt^n\quad\text{ and }\quad\Trun_{m,t}\bigg(\sum_{n=0}^\infty a_nt^n\bigg)\deq\sum_{n=0}^m a_nt^n.
\end{align*}
For $p\nmid 6$, it is easy to check that $\frac{(\frac{1}{2})_r(\frac{1}{6})_r(\frac{5}{6})_r}{(r!)^3}$ is divisible by $p$ for $\frac{p^l-1}{6}<r\leq p^l-1$. From this, one can easily deduce that
\begin{align*}
    \Trun_{p^l-1,\alpha}\big((1-1728\alpha)^{\frac{p^l-1}{2}}\cdot B(\alpha)\big)
    \;\equiv\;(1-1728\alpha)^{\frac{p^l-1}{2}}\cdot\Trun_{p^l-1}(B)(\alpha)\pmod{p}.
\end{align*}
To summarize, we obtain that
\begin{align*}
    Q_{p^l}(\alpha)\;\equiv\;(1-1728\alpha)^{\frac{p^l-1}{2}}\cdot\pFq{3}{2}{\frac{1}{2},\frac{1}{6},\frac{5}{6}}{1,1}{1728\alpha}_{p^l-1}\pmod{p}
\end{align*}
as polynomials in $\alpha$.

Now, for $c\in\ov{\Z_p}$ with $v_p(c)=0$,
\begin{align*}
    a_{p^l}(F_{4,c})\eq P_{4,p^l}(c)&\eq c^{p^l-1} Q_{p^l}(c^{-1}) \\
    &\;\equiv\;c^{p^l-1}(1-1728c^{-1})^{\frac{p^l-1}{2}}\cdot\pFq{3}{2}{\frac{1}{2},\frac{1}{6},\frac{5}{6}}{1,1}{1728c^{-1}}_{p^l-1} \\
    &\;\equiv\;\big(c(c-1728)\big)^{\frac{p^l-1}{2}}\cdot\sum_{n=0}^{p^l-1}\frac{(6m)!}{(m!)^3(3m)!}\thin c^{-m}\pmod{p},
\end{align*}
which proves \Cref{k=4-hypergeom-theorem}.

\subsection{Proof of \Cref{good-prime-theorem-higher}}

Let $L$ be a number field, $C/L$ be an elliptic curve, and $\fr{p}$ be a good prime of $C$ with $v_\fr{p}(j(C))=0=v_\fr{p}(j(C)-1728)$. Let $\F_\fr{p}$ denote the residue field of $\fr{p}$ and let $p$ be the rational prime lying under $\fr{p}$. For $a\in L$ with $v_\fr{p}(a)\geq 0$, define a quadratic character
\begin{align*}
    \bigg(\frac{a}{\fr{p}}\bigg)\eq\begin{cases}
        1&\text{ if $a$ is a square in $\F_\fr{p}^\times$,} \\
        -1&\text{ if $a$ is not a square in $\F_\fr{p}^\times$,} \\
        0&\text{ if $a=0$ in $\F_\fr{p}$.}
    \end{cases}
\end{align*}
It is easy to check that if $\fr{p}\nmid 2$, then
\begin{align*}
    \bigg(\frac{a}{\fr{p}}\bigg)\eq a^{\frac{\N(\fr{p})-1}{2}}\hspace{0.2cm}\text{ in }\F_\fr{p}.
\end{align*}
By \Cref{k=4-hypergeom-theorem},
\begin{align*}
    a_{\N(\fr{p})}(F_{4,C})\;\equiv\;\bigg(\frac{j(C)(j(C)-1728)}{\fr{p}}\bigg)\cdot\pFq{3}{2}{\frac{1}{2},\frac{1}{6},\frac{5}{6}}{1,1}{\frac{1728}{j(C)}}_{\N(\fr{p})-1}\pmod{p}.
\end{align*}
It thus suffices to show that
\begin{align*}
    a_\fr{p}(C)^2\;\equiv\;\bigg(\frac{j(C)(j(C)-1728)}{\fr{p}}\bigg)\cdot\pFq{3}{2}{\frac{1}{2},\frac{1}{6},\frac{5}{6}}{1,1}{\frac{1728}{j(C)}}_{\N(\fr{p})-1}\pmod{\fr{p}}.
\end{align*}
This congruence should follow from the hypergeometric machinery, though an explicit reference seems hard to find. Here we write a sketch of the proof and refer to the expository note \cite{3f2-congruence} for an explicit proof of this congruence.

Write $c=j(C)$ and consider the elliptic curve
\begin{align*}
    C_1: \; y^2+xy\eq x^3-\frac{t}{432}
\end{align*}
with $t=\frac{1-\sqrt{1-1728/c}}{2}$ (so that $j(C_1)=c$). It is known that for a prime $\fr{P}$ of $L(\sqrt{1-1728/c})$ that is a good prime of $C_1$,
\begin{align*}
    a_\fr{P}(C_1)\;\equiv\;\pFq{2}{1}{\frac{1}{6},\frac{5}{6}}{1}{t}_{\N(\fr{P})-1}\pmod{\fr{P}}.
\end{align*}
Now, one can relate $a_\fr{p}(C)$ and $a_\fr{P}(C_1)$ via the fact that $j(C)=j(C_1)$ and use an appropriate version of the truncated Clausen's formula to obtain the desired congruence.

\section{Proof of \Cref{cm-middle-supercongruence-intro}}
\label{magnetic-proof-section}

We will now prove \Cref{cm-middle-supercongruence-intro}, i.e. the supercongruences and the $\frac{k-2}{2}$-magnetic property of~$G_{k,D}^{(k/2)}$, or rather its generalization \Cref{cm-middle-supercongruence-higher}. In fact,  we will further generalize this to a result that holds for any discriminant $D<0$.

Recall that for $k\in 2\Z_{\geq2}$, 
\begin{align*}
    G_k(z,\tau)\eq\frac{g_k(z)\cdot g_{2-k}(\tau)}{j(z)-j(\tau)}.
\end{align*}
For a discriminant $D<0$, define
\begin{align}
    \cl{G}_{k,D}(z)\deq\frac{2}{w_D}\sum_{Q\in\cl{Q}_D^\prim}(\partial^{\frac{k-2}{2}}_\tau G_k)(z,\alpha_Q).
\end{align}
As before, since $(\partial^{\frac{k-2}{2}}_\tau G_k)(z,\tau)$ is of weight $0$ with respect to $\tau$, $\cl{G}_{k,D}(z)$ is independent of the choice of~$\cl{Q}_D^\prim$.

\begin{theorem}
\label{magnetic-theorem}
Let $k\in 2\Z_{\geq2}$ and $\lambda$ be a relation for $S_k$. Let $D<0$ be a discriminant and write $D=A^2D_0$ for some $A\in\Z$ and some fundamental discriminant $D_0<0$. Let
\begin{align*}
    \wt{\cl{G}}_{k,D}\eq\begin{cases}
            |D_0|^{-\frac{k}{4}}\cl{G}_{k,D}&\text{ if }k\equiv0\pmod{4},\\
            |D_0|^{\frac{k-2}{4}}\cl{G}_{k,D}&\text{ if }k\equiv 2\pmod{4}.
    \end{cases}
\end{align*}
Then, $\wt{\cl{G}}_{k,D}$ satisfies the following. 
\begin{enumerate}[label=\rm{(\arabic*)}]
    \item $\wt{\cl{G}}_{k,D}\in\Z[[q]]$ and $\wt{\cl{G}}_{k,D}|\lambda$ is $\frac{k-2}{2}$-magnetic, i.e., for all $n\in\Z^+$,
    \begin{align*}
        n^{\frac{k-2}{2}}\thin\big|\thin a_n(\wt{\cl{G}}_{k,D}|\lambda).
    \end{align*}
    \item Let $p$ be a prime with $p\nmid A$. Then, for all $n,l\in\Z^+$,
    \begin{align*}
    a_{np^l}(\wt{\cl{G}}_{k,D}|\lambda)\;\equiv\;\big(\big(\tfrac{D}{p}\big)p\big)^{\frac{k-2}{2}}a_{np^{l-1}}(\wt{\cl{G}}_{k,D}|\lambda)\pmod{p^{(k-1)l}}.
    \end{align*}
\end{enumerate}
\end{theorem}

\begin{remark}
This theorem should be viewed as a natural generalization of Li--Neururer \cite[Theorem~1.5]{li-neururer} and Pa\c{s}ol--Zudilin \cite[Theorems~1~and~2]{pasol-zudilin}. Indeed, the proof uses the same method: one considers the half-integral weight preimage of $\cl{G}_{k,D}$ under the Shimura lift, analyzes the $U_p$ action on the preimage, and transfers the properties of the preimage to the supercongruences and the magnetic property of $\cl{G}_{k,D}$.
\end{remark}

\subsection{The Shimura lift}

Let $s\in\Z$, $f\in M_{s+1/2}^{!,+}$, and $d_0$ be a fundamental discriminant $d_0$ with $(-1)^sd_0>0$. Define the \emph{$d_0$-th Shimura lift} of $f$ by
\begin{align*}
    \cl{S}_{d_0}(f)\deq\frac{1}{2}\cdot L\big(1-s,(\tfrac{d_0}{\cdot})\big)\thin a_0(f)+\sum_{n=1}^\infty\bigg(\sum_{m\divides n}\big(\tfrac{d_0}{m}\big)\thin m^{s-1}\thin a_{|d_0|n^2/m^2}(f)\bigg)q^n.
\end{align*}

Now, let $d$ be a discriminant and $d_0$ be a fundamental discriminant with $dd_0<0$. For an $\SL_2(\Z)$-invariant function $f(\tau)$ on $\clh$, recall the following definition from \cite{duke-jenkins-basis}
\begin{align}
\label{tr-def-eqn}
    \Tr_{d,d_0}(f)\deq\sum_{Q\in\cl{Q}_{dd_0}}\frac{1}{w_Q}\cdot \chi(Q)f(\alpha_Q),
\end{align}
where for each $Q(X,Y)=aX^2+bXY+cY^2\in\cl{Q}_{dd_0}$, we define
\begin{align*}
    \chi(Q)\eq\chi_{d,d_0}(Q)\deq\begin{cases}
        \big(\frac{d_0}{r}\big)&\text{ if $(a,b,c,d_0)=1$ and $Q$ represents $r$ with $(r,d_0)=1$,} \\
        0&\text{ if $(a,b,c,d_0)>1$.}
    \end{cases}
\end{align*}

Now, let $s\in\Z_{\geq2}$, $d$ be a discriminant, and $d_0$ be a fundamental discriminant with $(-1)^sd_0>0$ and $dd_0<0$. Write $2s=k'+12\delta$ for some $k'\in\{0,4,6,8,10,14\}$ and $\delta\in\Z$. We define
\begin{align*}
    h_{s,d,d_0}\deq \sum_{n=0}^\delta a_n\big(\cl{S}_{d_0} f_{s+1/2,|d|}\big)\cdot g_{2s,-n}.
\end{align*}

\begin{lemma}
\label{h_d-lemma}
Let $s\in\Z_{\geq2}$, $d$ be a discriminant, and $d_0$ be a fundamental discriminant with $(-1)^sd_0>0$ and $dd_0<0$. Then, $h_{s,d,d_0}$ has integer coefficients and lies in $S_{2s}$.
\end{lemma}
\begin{proof}
The integrality of $h_{s,d,d_0}$ simply follows from the integrality of $f_{s+1/2,|d|}$ and $g_{2s,-n}$. Now, since $s\in\Z_{\geq2}$ and $|d|>0$, it is easy to check that $a_0(f_{s+1/2,|d|})=0$, so $a_0\big(\cl{S}_{d_0} f_{s+1/2,|d|}\big)=0$. Hence, $h_{s,d,d_0}\in S_{2s}$.
\end{proof}

\begin{proposition}
\label{shimura-preimage}
Let $s\in\Z_{\geq2}$, $d$ be a discriminant, and $d_0$ be a fundamental discriminant with $(-1)^sd_0>0$ and $dd_0<0$. Then,
\begin{align*}
    \big(\cl{S}_{d_0} f_{s+1/2,|d|}\big)(z)-h_{s,d,d_0}(z)\eq-(-1)^{\lfloor\frac{s-1}{2}\rfloor}\thin|d|^{-\frac{s}{2}}\thin|d_0|^{\frac{s-1}{2}}\;\Tr_{d,d_0}\big((\partial^{s-1}_\tau G_{2s})(z,\tau)\big),
\end{align*}
where $\Tr_{d,d_0}$ is taken with respect to $\tau$.
\end{proposition}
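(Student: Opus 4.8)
The plan is to establish the identity by matching Fourier expansions in the variable $z$, turning the statement into a family of identities between twisted traces of CM values and the divisor sums defining the Shimura lift. Expanding the integral-weight kernel at the cusp $z\to i\infty$ gives $G_{2s}(z,\tau)=\sum_{n=1}^\infty g_{2-2s,n}(\tau)\thin q_z^n$ with $q_z=e^{2\pi i z}$ and $g_{2-2s,n}=n^{2s-1}(E_{14-2s}/\Delta)\thin|T_{n,2-2s}$, as in \Cref{good-prime-proof-section}. Because $\partial_\tau$ raises the weight in $\tau$ by $2$ and $(2-2s)+2(s-1)=0$, the form $(\partial_\tau^{s-1}G_{2s})(z,\tau)$ has weight $0$ in $\tau$, so its values at the CM points $\alpha_Q$ are algebraic and the twisted trace is defined; applying $\Tr_{d,d_0}$ term by term yields
\[
    \Tr_{d,d_0}\big((\partial_\tau^{s-1}G_{2s})(z,\tau)\big)\eq\sum_{n=1}^\infty \Tr_{d,d_0}\big(\partial_\tau^{s-1}g_{2-2s,n}\big)\thin q_z^n .
\]
On the other side, the definition of $\cl{S}_{d_0}$ gives the $n$-th coefficient $\sum_{m\divides n}\big(\tfrac{d_0}{m}\big)m^{s-1}a_{|d_0|n^2/m^2}(f_{s+1/2,|d|})$. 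Since both sides are weight $2s$ meromorphic modular forms whose only poles lie at the CM points of discriminant $dd_0$, it suffices to match these $q_z$-expansions, i.e.\ to prove for every $n\geq1$ that
\[
    \sum_{m\divides n}\big(\tfrac{d_0}{m}\big)m^{s-1}a_{|d_0|n^2/m^2}(f_{s+1/2,|d|})\eq-(-1)^{\lfloor(s-1)/2\rfloor}\thin|d|^{-s/2}|d_0|^{(s-1)/2}\thin\Tr_{d,d_0}\big(\partial_\tau^{s-1}g_{2-2s,n}\big).
\]

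The engine for this identity is the theory relating twisted traces of CM values of (derivatives of) weakly holomorphic forms of weight $2-2s$ to Fourier coefficients of forms of weight $s+\tfrac12$, in the spirit of Zagier, Gross--Kohnen--Zagier, and Duke--Jenkins. Conceptually, I would realize both sides as a single regularized Shintani/Kudla--Millson theta lift of $f_{s+1/2,|d|}$: unfolding the theta integral against the principal part $q^{-|d|}$ of $f_{s+1/2,|d|}$ produces the twisted trace of CM values of $\partial_\tau^{s-1}G_{2s}$ (this is where the genus character $\chi_{d,d_0}$ arises, matching the twist $(\tfrac{d_0}{\cdot})$ built into the $d_0$-twisted theta kernel), while the same lift computed spectrally is the classical Shimura lift with the divisor-sum coefficients above. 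Equating the two evaluations gives the equality.

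To make the $n$-dependence transparent and reduce bookkeeping, I would exploit Hecke equivariance. The form $g_{2-2s,n}$ is obtained from the base form $g_{2-2s,1}=E_{14-2s}/\Delta$ by $T_{n,2-2s}$, and the twisted trace map intertwines $T_{n,2-2s}$ with the half-integral weight Hecke action in exactly the way that produces the divisor sum $\sum_{m\divides n}(\tfrac{d_0}{m})m^{s-1}(\,\cdot\,)$ on the Shimura side; this is the same compatibility that makes $\cl{S}_{d_0}$ send $T_{p^2}$ to $T_p$. With this in hand the general $n$ follows from the single base case $n=1$, namely the trace-of-singular-moduli identity $a_{|d_0|}(f_{s+1/2,|d|})=-(-1)^{\lfloor(s-1)/2\rfloor}|d|^{-s/2}|d_0|^{(s-1)/2}\thin\Tr_{d,d_0}(\partial_\tau^{s-1}(E_{14-2s}/\Delta))$, which one proves directly from the unfolded theta integral.

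The main obstacle will be the precise constant, not the qualitative shape of the identity: the sign $-(-1)^{\lfloor(s-1)/2\rfloor}$ and the powers $|d|^{-s/2}|d_0|^{(s-1)/2}$ must be extracted from (i) the normalization of the iterated Maass raising operator $\partial_\tau^{s-1}$ acting on a weight $2-2s$ form, (ii) the archimedean constants in the theta lift, and (iii) the normalization of the genus character together with the weights $w_Q$ in the definition of $\Tr_{d,d_0}$, with extra care at the special discriminants where $\alpha_Q$ has nontrivial automorphisms. Assembling these factors correctly, and checking that the plus-space projection implicit in $f_{s+1/2,|d|}$ and the choice of $d_0$ with $(-1)^s d_0>0$ are compatible with the genus-character twist, is the delicate, computation-heavy part of the argument.
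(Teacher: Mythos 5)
Your plan correctly identifies the circle of ideas behind the statement (twisted traces of CM values of $\partial_\tau^{s-1}$ of weight $2-2s$ forms versus coefficients of weight $s+\tfrac12$ forms, in the Zagier/Gross--Kohnen--Zagier/Duke--Jenkins tradition), and the reduction to matching $q_z$-coefficients and then to the base case $n=1$ via Hecke equivariance is structurally sound. But as written it is not a proof: the entire quantitative content of the proposition --- the identity for $n=1$ together with the exact constant $-(-1)^{\lfloor(s-1)/2\rfloor}|d|^{-s/2}|d_0|^{(s-1)/2}$ --- is exactly what you defer to ``the delicate, computation-heavy part of the argument'' and never carry out. Since the proposition \emph{is} that identity with that constant, nothing has actually been established. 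Moreover, the intermediate claim that $\Tr_{d,d_0}$ intertwines $T_{n,2-2s}$ with the divisor-sum operator $\sum_{m\mid n}(\tfrac{d_0}{m})m^{s-1}(\cdot)$ on the half-integral-weight side is itself a nontrivial assertion (essentially the Hecke equivariance of the twisted theta lift) that your sketch assumes rather than proves.

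For comparison, the paper's proof is a single citation: the statement is a special case of Proposition~10 of Duke--Jenkins \cite{duke-jenkins-basis}, which already contains the identity with the precise normalization for the basis elements $f_{s+1/2,m}$ of the Kohnen plus space. If you want a self-contained argument, the missing work is precisely the content of that reference (or of the unfolding computation you gesture at), so the efficient and complete route here is to invoke it directly rather than to re-derive the archimedean constants of the theta lift.
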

\begin{proof}
See \cite[Proposition~10]{duke-jenkins-basis}.
\end{proof}

\subsection{The Hecke actions}

We will now analyze the actions of the Hecke operators on $f_{s+1/2,m}$ and transfer them to modular forms of integral weight. 

\begin{lemma}
\label{half-int-integrality-lemma}
Let $s\in\Z_{\geq2}$, $m\in\Z^+$ with $(-1)^{s-1}m\equiv 0,1\pmod{4}$, and $p$ be a prime such that
\begin{enumerate}[label=\rm{(\arabic*)}]
    \item $p^2\nmid m$, or
    \item if $p=2$ and $4\divides m$, then $(-1)^{s-1}\thin\frac{m}{4}\equiv2,3\pmod{4}$.
\end{enumerate}
Let
\begin{align*}
    g_0&\deq f_{s+1/2,m} \\
    g_1&\deq g_0|T_{p,s+1/2}-\big(\tfrac{(-1)^{s-1}m}{p}\big)p^{s-1}\cdot g_0 \\
    g_{i+1}&\deq g_i|T_{p,s+1/2}-p^{2s-1}\cdot g_{i-1}
\end{align*}
Then, for all $i\in\Z^+$,
\begin{align*}
    g_i\eq p^{(2s-1)i}q^{-mp^{2i}}+O(q)\in\Z[[q,q^{-1}]].
\end{align*}
In particular, $g_i-p^{(2s-1)i}\cdot f_{s+1/2,mp^{2i}}$ has integer coefficients and lies in $S_{s+1/2}^+$.
\end{lemma}
\begin{proof}
Write $f_m=f_{s+1/2,m}$. The case when $i=0$ follows from the definition of $f_m$.  For $i=1$, it is easy to check that the assumptions on $p$ and $m$ imply that
\begin{align*}
    g_1&\eq g_0|T_{p,s+1/2}-\big(\tfrac{(-1)^{s-1}m}{p}\big)p^{s-1}\cdot g_0 \\
    &\eq\left(p^{2s-1}q^{-mp^2}+\big(\tfrac{(-1)^{s-1}m}{p}\big)p^{s-1}q^{-m}+O(q)\right)-\left(\big(\tfrac{(-1)^{s-1}m}{p}\big)p^{s-1}q^{-m}+O(q)\right) \\
    &\eq p^{2s-1}q^{-mp^2}+O(q),
\end{align*}
from which the result follows. Now, for $i\geq 1$, it is easy to check that by induction hypothesis,
\begin{align*}
    g_{i+1}\eq g_i|T_{p,s+1/2}-p^{2s-1}\cdot g_{i-1}\eq p^{(2s-1)(i+1)}q^{-mp^{2i+2}}+O(q).
\end{align*}
The result then follows.
\end{proof}

Now, recall that the Shimura lift is Hecke-equivariant. That is, for $s\in\Z$, $f\in M_{s+1/2}^{!,+}$, and a rational prime $p$
\begin{align*}
    \cl{S}_{d_0}(f|T_{p,s+1/2})\eq\cl{S}_{d_0}(f)|T_{p,2s},
\end{align*}
where $d_0$ is some fundamental discriminant with $(-1)^sd_0>0$.

\begin{proposition}
\label{p^2-nmid-m}
Let $s\in\Z_{\geq2}$ and let $d_0$ be a fundamental discriminant with \mbox{$(-1)^sd_0>0$}. Let $m\in\Z^+$ with $(-1)^{s-1}m\equiv0,1\pmod{4}$ and let $p$ be a prime such that
\begin{enumerate}[label=\rm{(\arabic*)}]
    \item $p^2\nmid m$, or
    \item if $p=2$ and $4\divides m$, then $(-1)^{s-1}\thin\frac{m}{4}\equiv2,3\pmod{4}$.
\end{enumerate}
Let
\begin{align*}
    F_m\deq\cl{S}_{d_0}f_{s+1/2,m}
\end{align*}
and let $\lambda$ be a relation for $S_{2s}$. Then, for all $n,l\in\Z^+$,
\begin{align*}
    a_{np^l}(F_m|\lambda)\;\equiv\;\big(\tfrac{(-1)^{s-1}m}{p}\big)p^{s-1}\cdot a_{np^{l-1}}(F_m|\lambda)\pmod{p^{(2s-1)l}}.
\end{align*}
In particular, for all $n,l\in\Z^+$,
\begin{align*}
    a_{np^l}(F_m|\lambda)\;\equiv\;0\pmod{p^{(s-1)l}}.
\end{align*}
\end{proposition}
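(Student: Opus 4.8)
The plan is to transfer the problem to the half-integral weight side through the Shimura lift and then extract the supercongruence by a telescoping induction in which each application of $U_p$ contributes a fresh factor of $p^{2s-1}$. Let $g_0,g_1,g_2,\dots$ be the forms produced by \Cref{half-int-integrality-lemma}, and set $F_i\deq\cl{S}_{d_0}(g_i)$, so that $F_1=F_m$ and $F_0=p^{s-1}\big(\tfrac{(-1)^{s-1}m}{p}\big)F_m$. The defining recursion for the $g_i$ reads $g_i|T_{p,s+1/2}=p^{2s-1}g_{i+1}+g_{i-1}$ for $i\geq1$, so applying the Hecke-equivariance $\cl{S}_{d_0}(g|T_{p,s+1/2})=\cl{S}_{d_0}(g)|T_{p,2s}$ yields the integral-weight recursion
\begin{align*}
    F_i|T_{p,2s}\eq p^{2s-1}F_{i+1}+F_{i-1}\qquad(i\geq1).
\end{align*}
Writing $T_{p,2s}=U_p+p^{2s-1}V_p$ and rearranging, this is the \emph{exact} identity
\begin{align*}
    F_i|U_p-F_{i-1}\eq p^{2s-1}\big(F_{i+1}-F_i|V_p\big)\qquad(i\geq1).
\end{align*}

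Next I would abbreviate $\chi\deq\big(\tfrac{(-1)^{s-1}m}{p}\big)$ and set $\Phi_l\deq F_m|U_p^l-p^{s-1}\chi\,F_m|U_p^{l-1}$, so that $a_n(\Phi_l)=a_{np^l}(F_m)-p^{s-1}\chi\,a_{np^{l-1}}(F_m)$ is exactly the quantity to be controlled. The heart of the argument is the induction on $l$ showing
\begin{align*}
    \Phi_l\eq p^{l(2s-1)}\big(F_{l+1}-F_l|V_p\big)\qquad(l\geq1).
\end{align*}
The case $l=1$ is the exact identity above with $i=1$, using $F_0=p^{s-1}\chi F_m$. For the inductive step one notes $\Phi_{l+1}=\Phi_l|U_p$; applying $U_p$ to the inductive hypothesis and using $V_pU_p=\id$ on $q$-series gives $(F_{l+1}-F_l|V_p)|U_p=F_{l+1}|U_p-F_l$, which by the exact identity with $i=l+1$ equals $p^{2s-1}(F_{l+2}-F_{l+1}|V_p)$. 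This peels off precisely one more factor of $p^{2s-1}$ and closes the induction.

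To conclude, I would observe that for $n\geq1$ the coefficient $a_n(F_i)=\sum_{e\divides n}\big(\tfrac{d_0}{e}\big)e^{s-1}a_{|d_0|n^2/e^2}(g_i)$ is a $\Z$-linear combination of the (integral) coefficients of $g_i$, hence an integer; thus $F_{l+1}-F_l|V_p$ is $p$-integral in every positive degree. The formula for $\Phi_l$ then gives $a_n(\Phi_l)\equiv0\pmod{p^{(2s-1)l}}$ for all $n\geq1$, which is the asserted congruence. The ``in particular'' statement follows by a short induction on $l$: since $a_n(F_m)\in\Z$, the case $l=1$ gives $p^{s-1}\divides a_{np}(F_m)$, and feeding $p^{(s-1)(l-1)}\divides a_{np^{l-1}}(F_m)$ into the main congruence yields $p^{(s-1)l}\divides a_{np^l}(F_m)$.

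The one genuinely delicate point is recognizing the relation $F_i|U_p-F_{i-1}=p^{2s-1}(F_{i+1}-F_i|V_p)$ as an \emph{exact} identity rather than a mere congruence modulo $p^{2s-1}$. It is this exactness, together with $V_pU_p=\id$, that allows each iterate of $U_p$ to contribute another full power of $p^{2s-1}$, thereby producing the supercongruence modulus $p^{(2s-1)l}$ in place of the naive $p^{2s-1}$ that one would get from a single step.
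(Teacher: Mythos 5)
Your argument is correct and is essentially the paper's own proof: both transfer the problem to the half-integral weight side via the forms $g_i$ of \Cref{half-int-integrality-lemma}, push them through the Hecke-equivariant Shimura lift, and telescope the identity $F_i|U_p-F_{i-1}=p^{2s-1}(F_{i+1}-F_i|V_p)$ (the paper's $H_{i+1}=p^{-(2s-1)}H_i|U_p$) to extract the factor $p^{(2s-1)l}$. The only cosmetic difference is that you keep the identity $\Phi_l=p^{(2s-1)l}(F_{l+1}-F_l|V_p)$ exact to the end, while the paper passes to a congruence at the last step; the content is the same.
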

\begin{proof}
Let $g_i\in M_{s+1/2}^{!,+}$ be as in \Cref{half-int-integrality-lemma} and let $G_i=\cl{S}_{d_0}g_i$. In particular, 
\begin{align*}
    G_0&\eq F_m \\
    G_1&\eq G_0|T_{p,2s}-\big(\tfrac{(-1)^{s-1}m}{p}\big)p^{s-1}\cdot G_0 \\
    G_{i+1}&\eq G_i|T_{p,2s}-p^{2s-1}\cdot G_{i-1}.
\end{align*}
Let $\wt{G}_i=G_i|\lambda$. Then, since $\lambda$ commutes with $T_{p,2s}$, we have
\begin{align*}
    \wt{G}_{i+1}&\eq \wt{G}_i|T_{p,2s}-p^{2s-1}\cdot \wt{G}_{i-1}.
\end{align*}
Write $H_i=\wt{G}_i|U_p-p^{2s-1}\wt{G}_{i-1}$. Then,
\begin{align*}
    H_{i+1}\eq \wt{G}_{i+1}|U_p-p^{2s-1}\wt{G}_i\eq \left(\wt{G}_i|(U_p+p^{2s-1}V_p)-p^{2s-1}\wt{G}_{i-1}\right)\hspace{-0.08cm}\big|U_p-p^{2s-1} \wt{G}_i\eq H_i|U_p.
\end{align*}
It follows that
\begin{align*}
    H_1|U_p^{l-1}\eq H_l.
\end{align*}
Now, since the Shimura lift preserves cusp forms and $\lambda$ annihilates cusp forms, it follows from \Cref{half-int-integrality-lemma} that
\begin{align*}
    \wt{G}_i\eq p^{(2s-1)i}(\cl{S}f_{s+1/2,mp^{2i}})|\lambda\;\equiv\;0\pmod{p^{(2s-1)i}}.
\end{align*}
Hence,
\begin{align}
\label{H_l-congruence}
    H_l\eq \wt{G}_l|U_p-p^{2s-1}\wt{G}_{l-1}\;\equiv\; \wt{G}_l|T_{p,2s}-p^{2s-1}\wt{G}_{l-1}\;\equiv\;\wt{G}_{l+1}\;\equiv\;0\pmod{p^{(2s-1)(l+1)}}.
\end{align}
Now,
\begin{align*}
    F_m|\lambda U_p-\big(\tfrac{(-1)^{s-1}m}{p}\big)p^{s-1}\cdot F_m|\lambda &\eq \wt{G}_0|U_p-\big(\tfrac{(-1)^{s-1}m}{p}\big)p^{s-1} G_0  \\
    &\eq \wt{G}_1 - p^{2s-1}\wt{G}_0|V_p\;\equiv\;0\pmod{p^{2s-1}},
\end{align*}
and for $l\geq 2$,
\begin{align*}
    F_m|\lambda U_p^l-\big(\tfrac{(-1)^{s-1}m}{p}\big)p^{s-1}\cdot F_m|\lambda U_p^{l-1}&\eq \wt{G}_1|U_p^{l-1} - p^{2s-1}\wt{G}_0|U_p^{l-2} \\
    &\eq  H_1|U_p^{l-2} \\
    &\eq H_{l-1} \;\equiv\;0\pmod{p^{(2s-1)(l+1)}}.
\end{align*}
The result then follows.
\end{proof}

\begin{proposition}
\label{p-partial-magnetic}
Let $s\in\Z_{\geq2}$ and let $d_0$ be a fundamental discriminant with \mbox{$(-1)^sd_0>0$}. Let $m\in\Z^+$ with $(-1)^{s-1}m\equiv0,1\pmod{4}$ and let $p$ be a prime such that
\begin{enumerate}[label=\rm{(\arabic*)}]
    \item $p^2\nmid m$, or
    \item if $p=2$ and $4\divides m$, then $(-1)^{s-1}\thin\frac{m}{4}\equiv2,3\pmod{4}$.
\end{enumerate}
Let $t\in\Z$ with $t\geq 0$ and let
\begin{align*}
    F_{mp^{2t}}\deq\cl{S}_{d_0}f_{s+1/2,mp^{2t}}.
\end{align*}
Let $\lambda$ be a relation for $S_{2s}$. Then, for all $n,l\in\Z^+$,
\begin{align*}
    a_{np^l}(F_{mp^{2t}}|\lambda)\;\equiv\;\big(\tfrac{(-1)^{s-1}m}{p}\big)p^{s-1}\cdot a_{np^{l-1}}(F_{mp^{2t}}|\lambda)\pmod{p^{(2s-1)(l-t)}}.
\end{align*}
In particular, for all $n,l\in\Z^+$,
\begin{align*}
    a_{np^l}(F_{mp^{2t}}|\lambda)\;\equiv\;0\pmod{p^{(s-1)(l-t)}}.
\end{align*}
\end{proposition}
\begin{proof}
We will prove this by induction on $t$. Note that the base case when $t=0$ follows from \Cref{p^2-nmid-m}.  $G_i$ and $\wt{G}_i=G_i|\lambda$ as in the proof of \Cref{p^2-nmid-m} with respect to $m$. Since the Shimura lift preserves cusp forms and $\lambda$ annihilates cusp forms, it follows from \Cref{half-int-integrality-lemma} that
\begin{align*}
    \wt{G}_i\eq p^{(2s-1)i}(\cl{S}_{d_0}f_{s+1/2,mp^{2i}})|\lambda\eq p^{(2s-1)i}F_{mp^{2i}}|\lambda.
\end{align*}
Let $H_i=\wt{G}_i|U_p-p^{2s-1}\wt{G}_{i-1}$ as before. Then, 
\begin{align*}
    \wt{G}_t|U_p^l\eq(H_t+p^{2s-1}\wt{G}_{t-1})|U_p^{l-1}\eq H_{t+l-1}+p^{2s-1}\wt{G}_{t-1}|U_p^{l-1}.
\end{align*}
By (\ref{H_l-congruence}),
\begin{align*}
    H_{t+l-1}\;\equiv\;0\pmod{p^{(2s-1)(t+l)}}.
\end{align*}
Hence, 
\begin{align*}
    F_{mp^{2t}}|\lambda U_p^l\eq p^{-(2s-1)t}H_{t+l-1}+F_{mp^{2t-2}}|\lambda U_p^{l-1}\;\equiv\; F_{mp^{2t-2}}|\lambda U_p^{l-1}\pmod{p^{(2s-1)l}},
\end{align*}
so
\begin{align*}
    &F_{mp^{2t}}|\lambda U_p^l-\big(\tfrac{(-1)^{s-1}m}{p}\big)p^{s-1}F_{mp^{2t}}|\lambda U_p^{l-1}\;\equiv\; \\
    &\hspace{2cm}F_{mp^{2t-2}}|\lambda U_p^{l-1}-\big(\tfrac{(-1)^{s-1}m}{p}\big)p^{s-1}F_{mp^{2t-2}}|\lambda U_p^{l-2}\pmod{p^{(2s-1)l}}.
\end{align*}
The result now follows from induction.
\end{proof}

\begin{proposition}
\label{magnetic-prop}
Let $s\in\Z_{\geq2}$ and let $d_0$ be a fundamental discriminant with \mbox{$(-1)^sd_0>0$}. Let $m\in\Z^+$ with $(-1)^{s-1}m\equiv0,1\pmod{4}$ and let $A$ be the largest integer such that $A^2\divides m$ and $(-1)^{s-1}m/A^2\equiv0,1\pmod{4}$. Let
\begin{align*}
    F\deq A^{s-1}\cl{S}_{d_0}f_{s+1/2,m}
\end{align*}
and let $\lambda$ be a relation for $S_{2s}$. Then, $F|\lambda$ is $(s-1)$-magnetic, i.e., $n^{s-1}\divides a_n(F|\lambda)$ for all $n\in\Z^+$.
\end{proposition}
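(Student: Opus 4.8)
The plan is to deduce the $\frac{...}{...}$-magnetic property of $F=A^{s-1}\cl{S}_{d_0}f_{s+1/2,m}=A^{s-1}F_m$ from the partial divisibility already recorded in \Cref{p-partial-magnetic}, once we understand how the global normalizing factor $A$ decomposes prime by prime. First I would dispose of integrality: from the explicit formula for the Shimura lift, each positive-index coefficient $a_n(\cl{S}_{d_0}f_{s+1/2,m})$ is an integral combination (with coefficients $\big(\tfrac{d_0}{m}\big)m^{s-1}\in\Z$) of the integers $a_\bullet(f_{s+1/2,m})$, so $F\in\Z[[q]]$ and it suffices to bound $p$-adic valuations prime by prime.

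The key arithmetic input is the identity $v_p(A)=t_p$ for every prime $p$, where $t_p$ denotes the integer attached to $p$ in \Cref{p-partial-magnetic}, i.e.\ the largest $t$ with $p^{2t}\divides m$ and $(-1)^{s-1}m/p^{2t}\equiv0,1\pmod 4$. This is precisely the assertion that $A$ is the conductor in the factorization $(-1)^{s-1}m=D_0A^2$ with $D_0$ a fundamental discriminant, equivalently $A=\prod_p p^{t_p}$. For odd $p$ the congruence condition is vacuous, since dividing by $p^{2t}\equiv1\pmod4$ leaves the residue unchanged, so both $v_p(A)$ and $t_p$ equal $\lfloor v_p(m)/2\rfloor$. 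The only genuine bookkeeping is at $p=2$, which I would settle by a short case analysis on the parity of $v_2\big((-1)^{s-1}m\big)$ together with the residue of its odd part modulo $4$, checking that the maximal even power of $2$ one may extract while keeping the quotient $\equiv0,1\pmod4$ is exactly $2\thin v_2(A)$; the three relevant sub-cases are $v_2\big((-1)^{s-1}m\big)=0$, even $\geq2$, and odd $\geq3$.

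With this in hand the conclusion is immediate. Fix $n\in\Z^+$ and a prime $p\divides n$, and write $n=p^l n_0$ with $p\nmid n_0$ and $l=v_p(n)\geq1$. Applying \Cref{p-partial-magnetic} to $n_0$ and $l$ gives $v_p(a_n(F_m))\geq(s-1)(l-t_p)$, whence
\[
    v_p(a_n(F))=(s-1)v_p(A)+v_p(a_n(F_m))\geq(s-1)t_p+(s-1)(l-t_p)=(s-1)l=v_p(n^{s-1}).
\]
For $p\nmid n$ the inequality $v_p(a_n(F))\geq0=v_p(n^{s-1})$ holds by integrality. Since this covers every prime, $n^{s-1}\divides a_n(F)$ for all $n\in\Z^+$, which is the claim.

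The main obstacle is the conductor identity $v_p(A)=t_p$ at $p=2$; everything else is either formal or a direct invocation of \Cref{p-partial-magnetic}. What makes the $p=2$ case delicate is that the congruence $(-1)^{s-1}m/A^2\equiv0,1\pmod4$ is sensitive to the odd part of $m$ modulo $4$, so one must track that residue rather than only $2$-adic valuations; the lower bound $v_2(A)\geq t_2$ (the direction actually used above) is the one requiring care, and I expect it to fall out cleanly once the three sub-cases are tabulated against the possible shapes of a fundamental discriminant at $2$.
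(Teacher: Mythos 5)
Your proposal is correct and follows essentially the same route as the paper: both arguments reduce the claim to the prime-local identity $v_p(A)=t_p$ (i.e., that $A$ is the conductor of the discriminant $(-1)^{s-1}m$, or at least the inequality $v_p(A)\geq t_p$ that the final valuation count actually uses) and then invoke \Cref{p-partial-magnetic} exactly as you do. The only difference is in how that identity is established: the paper gives a short uniform-in-$p$ argument from the maximality of $A$ (deriving a contradiction from any $t'>v_p(A)$), whereas you split into odd $p$ and $p=2$ and leave the $p=2$ tabulation as a sketch --- that tabulation does go through, so this is a difference of bookkeeping rather than of substance.
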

\begin{proof}
Fix a prime $p$. It suffices to prove that
\begin{align*}
    a_{np^l}(F|\lambda)\;\equiv\;0\pmod{p^{(s-1)l}}
\end{align*}
for all $n,l\in\Z^+$. Let $t=v_p(A)$ and let $m'=m/p^{2t}$. Then, $m'$ satisfies that $(-1)^{s-1}m'\equiv0,1\pmod{4}$. We will prove that the pair $(p,m')$ satisfies the assumptions in \Cref{p-partial-magnetic}.

Suppose that $p^2|m'$. Then, $(Ap)^2|m$. If $p\geq 3$, then $(-1)^{s-1}m/(Ap)^2\equiv0,1\pmod{4}$, contradicting the minimality of $A$. Hence, $p=2$. Write $A'=A/p^{2t}$. By the maximality of~$A$,
\begin{align*}
    (-1)^{s-1}\frac{m}{(2A)^2}\eq(-1)^{s-1}\frac{m'}{4}\cdot\frac{1}{A'^2}\;\equiv\;2,3\pmod{4}.
\end{align*}
Since $2\nmid A'$, this implies that $(-1)^{s-1}m'/4\equiv2,3\pmod{4}$. Hence, the pair $(p,m')$ satisfies the assumptions in \Cref{p-partial-magnetic}.

Now, applying \Cref{p-partial-magnetic} with $m'p^{2t}$ yields that
\begin{align*}
    a_{np^l}(F_{m'p^{2t}}|\lambda)\;\equiv\;0\pmod{p^{(s-1)(l-t)}},
\end{align*}
where $F_{m'p^{2t}}=\cl{S}_{d_0}f_{s+1/2,m}$. The result then follows since $F=A^{s-1}F_{m'p^{2t}}$ and $p^t|A$.
\end{proof}

\subsection{Proof of \Cref{magnetic-theorem} when $k/2$ is even}
Adopt all the notations in \Cref{magnetic-theorem}. Assume that $s=k/2$ is even. As $s$ is even,  one can apply \Cref{shimura-preimage} with $d_0=1$ and $d=D$ to obtain that
\begin{align*}
    \big(\cl{S}_1f_{s+1/2,-D}\big)(z)-h_D(z)\eq (-1)^{\frac{s}{2}}\thin|D|^{-\frac{s}{2}}\thin\Tr_{D,1}\big((\partial_\tau^{s-1}G_{2s})(z,\tau)\big),
\end{align*}
where $h_D:=h_{s,D,1}\in S_{2s}$ has integer coefficients by \Cref{h_d-lemma}. Since $d_0=1$, we have  that $\chi(Q)=1$ for all $Q\in\cl{Q}_D$. Hence, for an $\SL_2(\Z)$-invariant function $f(\tau)$ on $\clh$,
\begin{align}
\label{tr-D1-f-eqn}
    \Tr_{D,1}(f)\eq\sum_{Q\in\cl{Q}_D}\frac{1}{w_Q}f(\alpha_Q).
\end{align}
Write $D=A^2D_0$ for some $A\in\Z^+$ and some fundamental discriminant $D_0<0$. Then,
\begin{align}
\label{clQ-D-decomp-eqn}
    \cl{Q}_D\eq\bigsqcup_{A'\divides A}\frac{A}{A'}\cdot \cl{Q}_{A'^2D_0}^\prim,
\end{align}
where $\frac{A}{A'}\cdot \cl{Q}_{A'^2D_0}^\prim=\{\frac{A}{A'}\cdot Q\mid Q\in\cl{Q}_{A'^2D_0}^\prim\}$. One can thus further write (\ref{tr-D1-f-eqn}) as
\begin{align*}
    \Tr_{D,1}(f)\eq\sum_{A'\divides A}\;\sum_{Q\in\cl{Q}_{A'^2D_0}^\prim}\frac{2}{w_{A'^2D_0}}f(\alpha_Q),
\end{align*}
where we use that $w_{A'^2D_0}=2w_Q$ for all $Q\in\cl{Q}_{A'^2D_0}^\prim$.

To summarize, we obtain that
\begin{align*}
    \big(\cl{S}_1f_{s+1/2,-D}\big)(z)-h_D(z)&\eq (-1)^{\frac{s}{2}}\thin|D|^{-\frac{s}{2}}\thin\sum_{A'\divides A}\;\sum_{Q\in\cl{Q}_{A'^2D_0}^\prim}\frac{2}{w_{A'^2D_0}}\thin(\partial_\tau^{s-1}G_{2s})(z,\alpha_Q) \\
    &\eq(-1)^{\frac{s}{2}}\thin|D|^{-\frac{s}{2}}\thin\sum_{A'\divides A}\cl{G}_{2s,A'^2D_0}(z).
\end{align*}
By the M\"{o}bius inversion formula,
\begin{align*}
    \cl{G}_{2s,A^2D_0}&\eq\sum_{A'\divides A}\mu\big(\tfrac{A}{A'}\big)(-1)^{\frac{s}{2}}\thin|A'^2D_0|^{\frac{s}{2}}(\cl{S}_1f_{s+1/2,-A'^2D_0}-h_{A'^2D_0}) \\
    &\eq(-1)^{\frac{s}{2}}|D_0|^{\frac{s}{2}}\sum_{A'\divides A}\mu\big(\tfrac{A}{A'}\big)A'^s(\cl{S}_1f_{s+1/2,-A'^2D_0}-h_{A'^2D_0}).
\end{align*}
Now, each $\cl{S}_1f_{s+1/2,-A'^2D_0}$ and $h_{A'^2D_0}$ have integer coefficients, so it follows that $\wt{\cl{G}}_{k,D}=|D_0|^{-\frac{k}{4}}\cl{G}_{k,D}=|D_0|^{-\frac{s}{2}}\cl{G}_{2s,A^2D_0}$ has integer coefficients. Since each $h_{A'^2D_0}\in S_{2s}$ and $\lambda$ annihilates $S_{2s}$, we obtain that
\begin{align*}
    \wt{\cl{G}}_{2s,A^2D_0}|\lambda\eq(-1)^{\frac{s}{2}}\sum_{A'\divides A}\mu\big(\tfrac{A}{A'}\big)A'^s\thin\cl{S}_1f_{s+1/2,-A'^2D_0}|\lambda.
\end{align*}
As $s$ is even, each $A'^{s-1}\thin\cl{S}_1f_{s+1/2,-A'^2D_0}|\lambda$ is $(s-1)$-magnetic by \Cref{magnetic-prop}, so it follows that $\wt{\cl{G}}_{k,D}|\lambda$ is $\frac{k-2}{2}$-magnetic. Part (2) of \Cref{magnetic-theorem} now follows from \Cref{p^2-nmid-m}.

\subsection{Proof of \Cref{magnetic-theorem} when $k/2$ is odd}
Adopt all the notations in \Cref{magnetic-theorem}. Assume that $s=k/2$ is odd. Write $D=A^2D_0$ for some $A\in\Z^+$ and some fundamental discriminant $D_0<0$ as before. As $s$ is odd, one can apply \Cref{shimura-preimage} with $d_0=D_0$ and $d=A^2$ to obtain that
\begin{align*}
    \big(\cl{S}_{D_0}f_{s+1/2,A^2}\big)(z)-h_D(z)\eq (-1)^{\frac{s+1}{2}} A^{-s}\thin|D_0|^{\frac{s-1}{2}}\thin\Tr_{A^2,D_0}\big((\partial_\tau^{s-1}G_{2s})(z,\tau)\big),
\end{align*}
where $h_D:=h_{s,A^2,D_0}\in S_{2s}$ has integer coefficients by \Cref{h_d-lemma}.
In this case, by (\ref{clQ-D-decomp-eqn}) and \Cref{genus-character-vanish-lemma}, we have that for an $\SL_2(\Z)$-invariant function $f(\tau)$ on $\clh$,
\begin{align*}
    \Tr_{A^2,D_0}(f)\eq\sum_{Q\in\cl{Q}_D}\frac{1}{w_Q}\chi(Q)f(\alpha_Q)\eq\sum_{A'\divides A}\;\sum_{Q\in\cl{Q}_{A'^2D_0}^\prim}\frac{2}{w_{A'^2D_0}}\big(\tfrac{D_0}{A/A'}\big)\thin f(\alpha_Q).
\end{align*}
To summarize, we obtain that
\begin{align*}
    \big(\cl{S}_{D_0}f_{s+1/2,A^2}\big)(z) - h_D(z)&\eq (-1)^{\frac{s+1}{2}} A^{-s}\thin|D_0|^{\frac{s-1}{2}}\sum_{A'\divides A}\;\sum_{Q\in\cl{Q}_{A'^2D_0}^\prim}\hspace{-0.2cm}\frac{2}{w_{A'^2D_0}}\big(\tfrac{D_0}{A/A'}\big)\thin (\partial_\tau^{s-1}G_{2s})(z,\alpha_Q) \\
    &\eq(-1)^{\frac{s+1}{2}} A^{-s}\thin|D_0|^{\frac{s-1}{2}}\sum_{A'\divides A}\big(\tfrac{D_0}{A/A'}\big)\cl{G}_{2s,A'^2D_0}(z).
\end{align*}
Hence,
\begin{align*}
    A^s(\cl{S}_{D_0}f_{s+1/2,A^2}-h_D)\eq(-1)^{\frac{s+1}{2}}|D_0|^{\frac{s-1}{2}}\sum_{A'\divides A}\big(\tfrac{D_0}{A/A'}\big)\cl{G}_{2s,A'^2D_0}.
\end{align*}
Now, both $\cl{S}_{D_0}f_{s+1/2,A^2}$ and $h_D$ have integer coefficients, so it follows from induction (on $A$) that $\wt{\cl{G}}_{k,D}=|D_0|^{\frac{k-2}{4}}\cl{G}_{k,D}=|D_0|^{\frac{s-1}{2}}\cl{G}_{2s,A^2D_0}$ has integer coefficients. Similarly as before, we obtain that
\begin{align*}
    A^s\thin\cl{S}_{D_0}f_{s+1/2,A^2}|\lambda\eq(-1)^{\frac{s+1}{2}}\sum_{A'\divides A}\big(\tfrac{D_0}{A/A'}\big)\wt{\cl{G}}_{2s,A'^2D_0}|\lambda.
\end{align*}
As $s$ is odd, $A^{s-1}\cl{S}_{D_0}f_{s+1/2,A^2}|\lambda$ is $(s-1)$-magnetic by \Cref{magnetic-prop}, so it follows from induction that $\wt{\cl{G}}_{k,D}|\lambda$ is $\frac{k-2}{2}$-magnetic. Similarly, Part (2) of \Cref{magnetic-theorem} follows from \Cref{p^2-nmid-m} and induction.

The following is a lemma which should follow from the theory of the genus characters of quadratic forms. We still include a proof for completeness. 

\begin{lemma}
\label{genus-character-vanish-lemma}
Let $D=A^2D_0<0$ be a discriminant with $A\in\Z^+$ and $D_0<0$ being a fundamental discriminant. Let $Q\in\cl{Q}_D$ and suppose that $Q$ is $\SL_2(\Z)$-equivalent to $A_Q\cdot Q'$ for some $A_Q\in\Z^+$ with $A_Q\divides A$ and $Q'\in\cl{Q}_{D/A_Q^2}^\prim$. Then, $\chi_{A^2,D_0}(Q)=\big(\frac{D_0}{A_Q}\big)$.
\end{lemma}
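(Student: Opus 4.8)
The plan is to reduce the statement to the multiplicativity of the Kronecker symbol together with the single arithmetic input that a primitive form takes the value $+1$ under the Kronecker symbol of its own discriminant. The reductions are formal; that last input is the only real content.

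First I would pin down the content of $Q$. Since $\SL_2(\Z)$-equivalence preserves the content (the gcd of the coefficients) and $Q$ is equivalent to $A_Q Q'$ with $Q'$ primitive, the content of $Q$ is exactly $A_Q$. Writing $Q = aX^2 + bXY + cY^2$, this gives $(a,b,c,D_0) = (A_Q, D_0)$, which splits the proof along the definition of $\chi_{A^2,D_0}$. If $(A_Q, D_0) > 1$, then $(a,b,c,D_0) > 1$, so $\chi_{A^2,D_0}(Q) = 0$ by definition; simultaneously $(A_Q, D_0) > 1$ forces the Kronecker symbol $\big(\tfrac{D_0}{A_Q}\big)$ to vanish, so both sides are $0$ and there is nothing to prove.

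So I would assume $(A_Q, D_0) = 1$. Set $\Delta := D/A_Q^2 = B^2 D_0$ with $B := A/A_Q \in \Z^+$, and write $Q' \in \cl{Q}_\Delta^\prim$. As $Q'$ is primitive it represents a value $r'$ coprime to any prescribed modulus, and I would choose $r'$ coprime to $\Delta$ (hence to both $D_0$ and $B$). Because $Q$ is equivalent to $A_Q Q'$, the form $Q$ represents $r := A_Q r'$, and $(r, D_0) = 1$ since both $A_Q$ and $r'$ are coprime to $D_0$. Since $\chi_{A^2,D_0}(Q)$ may be evaluated on any value represented by $Q$ that is coprime to $D_0$, multiplicativity of $\big(\tfrac{D_0}{\cdot}\big)$ in its lower argument yields
\[
\chi_{A^2,D_0}(Q) = \left(\frac{D_0}{r}\right) = \left(\frac{D_0}{A_Q}\right)\left(\frac{D_0}{r'}\right),
\]
so everything comes down to showing $\big(\tfrac{D_0}{r'}\big) = 1$.

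This last step is the crux and the main obstacle. Because $(r', B) = 1$, multiplicativity in the upper argument gives $\big(\tfrac{\Delta}{r'}\big) = \big(\tfrac{B}{r'}\big)^2\big(\tfrac{D_0}{r'}\big) = \big(\tfrac{D_0}{r'}\big)$, so it suffices to prove $\big(\tfrac{\Delta}{r'}\big) = 1$. This is the classical fact that a value represented by a primitive form of discriminant $\Delta$ and coprime to $\Delta$ lies in the kernel of $\big(\tfrac{\Delta}{\cdot}\big)$: writing $Q' = a'X^2 + b'XY + c'Y^2$ and $r' = Q'(x,y)$, the identity $4a'r' = (2a'x + b'y)^2 - \Delta y^2$ shows, for each prime $p \divides r'$ (necessarily $p \nmid \Delta$), that $\Delta$ is a square modulo $p$ once common factors of $x,y$ are stripped off; the stripped factors appear to even powers and so contribute $\big(\tfrac{\Delta}{p}\big)^{2k} = 1$, while the residual odd contribution gives $\big(\tfrac{\Delta}{p}\big) = 1$. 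Assembling these local evaluations (with the usual separate bookkeeping at $p = 2$) yields $\big(\tfrac{\Delta}{r'}\big) = 1$. I expect this prime-by-prime analysis, which is exactly the well-definedness-and-triviality content of genus character theory, to be the only genuinely technical point, the earlier reductions being purely formal.
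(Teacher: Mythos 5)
Your proof is correct and follows the same overall reduction as the paper: both arguments boil down, via multiplicativity of $\big(\tfrac{D_0}{\cdot}\big)$, to showing that a value represented by the primitive form $Q'$ and coprime to the discriminant has Kronecker symbol $+1$, and both extract this from the standard identity expressing that the discriminant is a square modulo a properly represented value. The differences are worth noting. First, you explicitly dispose of the degenerate case $(A_Q,D_0)>1$, where both sides vanish; the paper glosses over this (its formula $\chi_{A^2,D_0}(Q)=\big(\tfrac{D_0}{A_Qp'}\big)$ silently remains valid there only because both sides are $0$), so your care is a small improvement. Second, the paper chooses the represented value to be a \emph{prime} $p'$ coprime to $D$ --- invoking the nontrivial classical fact (Weber/Dirichlet) that a primitive form represents such primes --- which makes the endgame a one-line Legendre-symbol computation from $D_0(A/A_Q)^2=B^2-4p'C$. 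You instead use only the elementary fact that a primitive form represents a value coprime to any prescribed modulus, at the cost of a prime-by-prime (and $p=2$) analysis of $\big(\tfrac{\Delta}{r'}\big)$, which you only sketch. That sketch is the one place needing care: the cleanest way to finish it is to strip $d=\gcd(x,y)$ (whose contribution is $\big(\tfrac{\Delta}{d}\big)^2=1$), note that $Q'$ then properly represents $r'/d^2$ and is hence $\SL_2(\Z)$-equivalent to a form with leading coefficient $r'/d^2$, so $\Delta$ is a square modulo $4r'/d^2$, which gives $\big(\tfrac{\Delta}{p}\big)=1$ for every prime $p\divides r'/d^2$ (including $p=2$, since then $\Delta\equiv1\pmod 8$); your case analysis on whether $p$ divides $y$ or $a'$ can be made to work but is messier. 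In short: same skeleton, a more elementary existence input on your side traded against a heavier (and currently only sketched) local computation.
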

\begin{proof}
Since $Q'$ is primitive, there exists a prime $p'$  with $(p',D)=1$ such that $p'=Q'(x',y')$ for some $x',y'\in\Z$. Then,
\begin{align*}
    \chi_{A^2,D_0}(Q)\eq\big(\tfrac{D_0}{A_Q\cdot p'}\big)\eq\big(\tfrac{D_0}{A_Q}\big)\big(\tfrac{D_0}{p'}\big),
\end{align*}
so it suffices to prove that $\big(\frac{D_0}{p'}\big)=1$. Since $\gcd(x',y')=1$ (as $p'=Q'(x',y')$ is a prime), there exist $u,v\in\Z$ such that $x'u-y'v=1$. Then,
\begin{align*}
    Q'(x'X+vY,y'X+uY)\eq p'X^2+BXY+CY^2
\end{align*}
for some $B,C\in\Z$. In particular,
\begin{align*}
    D_0\cdot(A/A_Q)^2\eq D/A_Q^2\eq B^2-4p'C,
\end{align*}
so $D_0$ is a square modulo $p'$, i.e., $\big(\frac{D_0}{p'}\big)=1$.
\end{proof}

\bibliographystyle{amsalpha}
\bibliography{ref}

\end{document}